\newtheorem{thm}{Theorem}[section]
\newtheorem{cor}[thm]{Corollary}
\newtheorem{lem}[thm]{Lemma}
\newtheorem{clm}[thm]{Claim}
\newtheorem{prop}[thm]{Proposition}
\newtheorem{conj}[thm]{Conjecture}
\theoremstyle{remark}
\newtheorem{rmk}[thm]{Remark}
\theoremstyle{definition}
\newtheorem{defi}[thm]{Definition}
\def \E {\mathcal{E}}
\def \F {\mathcal{F}}
\def \O {\mathcal{O}}
\def \T {\mathcal{T}}
\def \bp {\bar{\partial}}
\def \C {\mathbb C}
\def \Z {\mathbb Z}
\def \F {\mathcal F}
\def \E {\mathcal E}
\def \V {\mathcal V}
\def \P {\mathbb P}
\def \T {\mathcal T}
\def \p {\partial}
\def \bp {\bar{\partial}}
\def \dVol {\text{dVol}}
\def \O {\mathcal{O}}
\def \l0 {\lim_{r\rightarrow 0}}
\def \Id {\text{Id}}
\def \Gr {Gr}
\def \Tr {\text{Tr}}
\def \Sing {\text{Sing}}
\def \S {\mathcal S}
\def \I {\mathcal I}
\DeclareMathOperator{\tr}{tr}
\DeclareMathOperator{\rk}{rank}
\def \C {\mathbb C}
\def \Z {\mathbb Z}
\def \F {\mathcal F}
\def \P {\mathbb P}
\def \p {\partial}
\def \bp {\bar{\partial}}
\def \dvol {\text{dVol}}
\def \O {\mathcal{O}}
\def \l0 {\lim_{r\rightarrow 0}}
\def \E {\mathcal{E}}
\def \Tr {\text{Tr}}
\numberwithin{equation}{section}
\begin{document}

\title{Analytic tangent cones of admissible Hermitian-Yang-Mills connections}
\date{\today}
\author{Xuemiao Chen\thanks{University of Maryland, xmchen@umd.edu}, Song Sun\thanks{UC Berkeley, sosun@berkeley.edu. }}

\maketitle
\begin{abstract}
In this paper we study the analytic tangent cones of admissible Hermitian-Yang-Mills connections near a homogeneous singularity of a reflexive sheaf, and relate it to the  Harder-Narasimhan-Seshadri filtration. We also give an algebro-geometric characterization of the bubbling set. 
\end{abstract}
\tableofcontents

\section{Introduction}
This article is a continuation of \cite{CS1} on studying tangent cones of (isolated) singularities of admissible Hermitian-Yang-Mills connections. The goals are the following
\begin{itemize}
\item Remove a technical assumption in the main theorem in \cite{CS1} by using a \emph{different} argument;
\item Study a \emph{stronger} notion of analytic tangent cones by including the information on the analytic bubbling sets;
\item Give an algebro-geometric characterization of the bubbling sets.
\end{itemize} 

Now we recall the main set-up, following \cite{CS1}. Let $B=\{|z|<1\}\subset \C^n$ be the unit ball endowed with a smooth K\"ahler metric $\omega=\omega_0+O(|z|^2)$ (where $\omega_0=\sqrt{-1}\p\bp |z|^2$ is the standard flat metric) and let $A$ be an admissible Hermitian-Yang-Mills connection on $B$.  Then $A$ defines a reflexive sheaf $\mathcal E$ over $B$. In this paper we always assume $0$ is an isolated singular point of $A$. Our goal is to understand the infinitesimal structure of $A$ near $0$ in terms of the complex/algebraic geometric information on the stalk of $\mathcal E$ at $0$. Loosely speaking we are searching for an \emph{analytic/algebraic} correspondence, which can be viewed as a \emph{local} analogue of the well-known Donaldson-Uhlenbeck-Yau theorem. 

From the analytic point of view, we can take \emph{analytic tangent cones} of $A$ at $0$, which are defined as follows. Let $\lambda: z\mapsto \lambda z$ be the rescaling map on $\C^n$. Then by Uhlenbeck's compactness result (\cite{ Nakajima,Tian, UY}),  we know as $\lambda\rightarrow 0$, by passing to a subsequence and applying gauge transforms, the rescaled sequence of connections $A_\lambda:=\lambda^*A$ converge to a smooth Hermitian-Yang-Mills connection $A_\infty$ on $\C^n_*\setminus \Sigma$. Here $\C^n_*:=\C^n\setminus\{0\}$, and  $\Sigma$ is a closed subset of $\C^n_*$ that has locally finite Hausdorff codimension four measure, and  we may assume $\Sigma$ is exactly the set where the convergence is not smooth. We call $\Sigma$ the \emph{analytic bubbling set}\footnote{For our purpose in this paper we will always remove the point $0$ and we only consider the convergence of \emph{smooth} connections, locally away from $0$, so that we can directly use the Uhlenbeck convergence theory. In general one could try to understand the bubbling set of a sequence of \emph{admissible} Hermitian-Yang-Mills connections, which we leave for future study.}. By Bando-Siu \cite{BS}, $A_\infty$ extends to an admissible Hermitian-Yang-Mills connection on $(\C^n, \omega_0)$ and it defines a reflexive sheaf $\E_\infty$ on $\C^n$. By \cite{Tian} (see also the discussion in Section 2),  passing to a further subsequence we may assume the Yang-Mills energy of $A_\lambda$ weakly converges to a limit Radon measure $\mu$ on $\C^n$. Write
$$\mu=|F_{A_\infty}|^2\dVol_{\omega_0}+8\pi^2\nu, $$ 
and define the \emph{blow-up} locus as $\Sigma_b:=\text{Supp}(\nu)\setminus \{0\}$. We know that $\Sigma$ is always a complex-analytic subvariety of $\C^n_*$ and  $\Sigma_b$ consists of precisely the closure of the codimension two part of $\Sigma$, and to each irreducible component of $\Sigma_b$ one can associate an \emph{analytic multiplicity}; the lower dimensional strata corresponds to the essential singularities of the connection $A_\infty$ which can not be removed (see Theorem $4.3.3$ and Remark $5$ in \cite{Tian}). For more detailed discussion see Section 2. 

Throughout this paper, we shall call the triple $(A_\infty, \Sigma, \mu)$ an \emph{analytic tangent cone} of $A$ at $0$.  Compared to \cite{CS1}, the definition here includes the extra data of the bubbling set and the limit measure, hence contains more information. A priori $(A_\infty, \Sigma, \mu)$ depends on the choice of subsequences as $\lambda\rightarrow 0$. We also know that $A_\infty$ is a HYM cone connection in the sense of Definition $2.22$ in \cite{CS1} (see Theorem $2.25$ there). Namely,  the corresponding reflexive sheaf $\E_\infty$ on $\C^n$ is isomorphic to $\psi_*\pi^*\underline \E_\infty$, where $\pi: \C^n_*\rightarrow \C\P^{n-1}$ is the natural projection map and $\psi: \C^n_*\rightarrow \C^n$ is the inclusion map, and 
$$\underline \E_\infty=\bigoplus_j \underline \F_j$$
where each $\underline \F_j$ is a stable reflexive sheaf.  The connection $A_\infty$ is isomorphic to the direct sum of the pull-back of the (unique) Hermitian-Yang-Mills connection on each $\underline \F_j$ under the projection map $\pi$, modified by adding a term given by  $\mu_j$ times the pull-back of the Chern connection associated to the Fubini-Study metric on $\O(1)$ (this is necessary to  make the Einstein constant vanish). So in short the limit connection $A_\infty$ is uniquely characterized by the algebraic data $\underline \E_\infty:=\oplus_j\underline \F_j$. In the language of \cite{CS1}, each factor $\underline\F_j$ corresponds to a \emph{simple HYM cone}.  We emphasize again that the analytic tangent cone is a priori not known to be unique, since it depends on not only the connection $A$ but also the choice of subsequences.

From the complex-algebraic point of view,  in \cite{CS1} we introduced the notion of an \emph{algebraic tangent cone} at a singularity of a reflexive coherent analytic sheaf $\E$. This is defined to be a torsion-free sheaf on $\C\P^{n-1}$ that is given by the restriction of a reflexive extension of $p^*(\E|_{B\setminus\{0\}})$ across $p^{-1}(0)$, where $p: \hat B\rightarrow B$ is the blown-up at $0$. We point out that in general algebraic tangent cones are not necessarily unique either, due to the fact that the exceptional divisor has complex codimension exactly one.  

To make a connection between analytic and algebraic tangent cones,  we recall the following conjecture in \cite{CS1}. Let $\E$ be a reflexive sheaf over $B$ with isolated singularity at $0$.
\begin{conj}\label{Conj1.1}
\begin{enumerate}[(I).]
\item Given any admissible Hermitian-Yang-Mills connection $A$ on $\E$, all the analytic tangent cones  of $A$ at $0$ have gauge equivalent admissible connection $A_\infty$ and hence the underlying sheaf $\E_\infty$ is unique up to isomorphism; 
\item There is an algebraic tangent cone $\underline\E^{alg}$ on $\C\P^{n-1}$ such that for all admissible Hermitian-Yang-Mills connection $A$ on $\E$, the reflexive sheaf $\E_\infty$ corresponding to the analytic tangent cones is always isomorphic to $\psi_*\pi^*((Gr^{HNS}(\underline\E^{alg}))^{**})$, where $Gr^{HNS}$ means taking the graded object associated to the Harder-Narasimhan-Seshadri filtration. 
\end{enumerate}
\end{conj}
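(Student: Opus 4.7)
The plan is to prove Conjecture~\ref{Conj1.1} by constructing a canonical algebraic object on $\C\P^{n-1}$ from $\E$, and then showing that every analytic tangent cone must match it. First I would define $\underline\E^{alg}$ concretely using the blow-up $p:\hat B\to B$: pull back $\E|_{B\setminus\{0\}}$ to $\hat B\setminus p^{-1}(0)$ and take a reflexive extension across the exceptional divisor $E\cong \C\P^{n-1}$. Such extensions are not unique because $E$ has complex codimension one in $\hat B$, so the next step is to establish a Mehta--Ramanathan type invariance statement: different reflexive extensions agree generically on $E$ and differ only by elementary transformations supported there, so that the graded object $(Gr^{HNS}(\underline\E^{alg}|_E))^{**}$ on $\C\P^{n-1}$ is canonically determined by $\E$. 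This canonical sheaf is the candidate for the right hand side of part (II).

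For part (II), I would match $(Gr^{HNS}(\underline\E^{alg}))^{**}$ with $\bigoplus_j \underline\F_j$ analytically. The idea is to use the rescalings $A_\lambda$ to realize the HNS filtration. For each step of the HN filtration of $\underline\E^{alg}|_E$ I would produce, for small $\lambda$, a weakly holomorphic projection on the pulled back bundle whose Yang--Mills slope at infinity matches the corresponding HN slope, and which becomes a genuine holomorphic splitting in the limit $A_\infty$. Iterating through the filtration identifies each $\underline\F_j$ with a Seshadri graded factor of a HN piece of $\underline\E^{alg}$, and the twisting weights $\mu_j$ appearing in Theorem~2.25 of \cite{CS1} should be read off as HN slopes of this construction.

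Once this correspondence is in place, part (I) follows for $\E_\infty$ because the right hand side is purely algebraic, and uniqueness of $A_\infty$ up to gauge follows by applying Donaldson--Uhlenbeck--Yau to each simple factor $\underline\F_j$. For uniqueness of the bubbling data $(\Sigma,\mu)$, I would identify $\Sigma_b$ with the pullback via $\pi$ of the codimension-two singular locus of $\bigoplus_j\underline\F_j$ together with codimension-two support of the torsion in $Gr^{HNS}(\underline\E^{alg})$, with the analytic multiplicities computed from the second Chern character jump between $Gr^{HNS}(\underline\E^{alg})$ and its double dual. A Lojasiewicz--Simon type inequality at the HYM cone, if available, would give a cleaner independent route to uniqueness, but I would not count on it as the primary tool.

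The main obstacle is the matching step in part (II): producing the weakly holomorphic projections that implement the algebraic HN filtration analytically, and controlling their behaviour as $\lambda\to 0$ in the presence of Uhlenbeck bubbling along $\Sigma_b$. This is precisely the interaction that forced the technical assumption in \cite{CS1}, and removing it requires a new argument that does not go through the analytic construction of destabilizing subsheaves on the ball. A natural alternative would be to reverse the direction of the correspondence: build the desired algebraic degeneration directly from the Donaldson-type functional on the rescaled annuli and show that its algebraic limit is forced to coincide with $(Gr^{HNS}(\underline\E^{alg}))^{**}$ by comparing analytic energies with Chern-character contributions. The hard analytic input, in either approach, is showing that the multiplicity attached to each irreducible component of $\Sigma_b$ equals the algebraic multiplicity read from the non-reflexive part of $Gr^{HNS}(\underline\E^{alg})$, which couples $\epsilon$-regularity for HYM connections with algebraic control of HN filtrations on nearby fibers of the blow-up.
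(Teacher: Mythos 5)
The statement you are attempting to prove is Conjecture \ref{Conj1.1}, which the paper does not prove: it is stated as a conjecture, with both the construction and uniqueness of the algebraic tangent cone $\underline\E^{alg}$ and the general analytic--algebraic matching explicitly deferred to future work. The paper's actual contribution (Theorem \ref{main}) is the special case where $0$ is a homogeneous singularity, i.e.\ $\E\cong(\psi_*\pi^*\underline\E)|_B$ for a holomorphic vector bundle $\underline\E$ on $\C\P^{n-1}$; there one may take $\underline\E^{alg}=\underline\E$ and the technical reflexivity assumption on $Gr^{HNS}(\underline\E)$ from \cite{CS1} is removed. Your proposal does not register this restriction and implicitly treats the full conjecture as within reach.

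Beyond that framing issue, several concrete steps in your outline are precisely the points the paper flags as open or handles differently. Your claim that distinct reflexive extensions across the exceptional divisor yield, after $(Gr^{HNS}(\cdot))^{**}$, a canonically determined sheaf on $\C\P^{n-1}$ is exactly the uniqueness question the authors leave unresolved; Mehta--Ramanathan concerns restriction to generic hypersurfaces, not extensions across a fixed divisor, so it does not supply the needed invariance. In the homogeneous case the paper actually proves, the mechanism is not ``weakly holomorphic projections'' realizing the HN filtration at infinity; it is a section-theoretic argument: pulled-back global sections from $\C\P^{n-1}$, the degree $d(s)$ governed by a three-circle convexity lemma, and---the key new ingredient over \cite{CS1}---a \emph{pointwise} (rather than $L^2$) orthogonal projection carried out away from the bubbling set, with uniform control obtained by restricting holomorphic sections to discs transverse to $\Sigma$ and invoking the maximum principle (Theorem \ref{Cut-Off} and Proposition \ref{prop2.18}). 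Finally, a \L ojasiewicz--Simon inequality at the HYM cone would at best give uniqueness of the analytic tangent cone (part (I)); it would not by itself produce the algebraic identification of $\E_\infty$ with $\psi_*\pi^*(Gr^{HNS}(\underline\E^{alg}))^{**}$ in part (II). In short, your sketch gestures at plausible directions but does not close the gaps that keep this statement a conjecture, and it misses that even the paper closes them only under the homogeneity hypothesis.
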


Notice (I) implies that $(A_\infty, \E_\infty)$ does not depend on the choice of subsequences when taking the limit $\lambda\rightarrow 0$, and (II) implies that it does not depend on the choice of the connection $A$ either  and is purely a complex algebraic geometric \emph{invariant} of the sheaf $\E$. 
One also would like to understand the construction and uniqueness of the algebraic tangent cones $\underline\E^{alg}$. These are sensible complex/algebro-geometric questions which will be studied in the future. 

In \cite{CS1} we studied the special case when $\E$ is isomorphic to $\psi_*\pi^*\underline \E$ for some locally free sheaf $\underline \E$ on $\C\P^{n-1}$ (in which case we call $0$ a homogeneous singularity of $\E$),   and we proved the above conjecture with the choice $\underline \E^{alg}=\underline \E$, but under a technical assumption that  $Gr^{HNS}(\underline \E)$ is reflexive. The first goal of this paper is to remove this technical restriction. Note that a posterior, $Gr^{HNS}(\underline \E)$ being reflexive is equivalent to $\Sigma_b$ being empty.  

\begin{thm}\label{main}
Suppose $\mathcal E$ is a reflexive sheaf on $B$ with $0$ as an isolated singularity, such that $\E$ is isomorphic to $(\psi_*\pi^* \underline\E)|_B$ for some holomorphic vector bundle $\underline\E$ over $\C\P^{n-1}$. Then for any admissible Hermitian-Yang-Mills connection $A$ on $\E$, all the tangent cones at $0$ have the connection $A_\infty$. More precisely,  the corresponding $\E_\infty$ is isomorphic to $\psi_*\pi^*(\Gr^{HNS}(\underline\E))^{**}$, and $A_\infty$ is gauge equivalent to the natural Hermitian-Yang-Mills cone connection that is induced by the admissible Hermitian-Yang-Mills connection on $(\Gr^{HNS}(\underline\E))^{**}$. Furthermore, $\pi^{-1}(\text{Sing}(Gr^{HNS}(\underline \E)))\subset \Sigma$ for any tangent cone $(A_\infty, \mu, \Sigma)$. 
\end{thm}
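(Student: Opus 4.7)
The plan is to identify, for any subsequential analytic tangent cone $(A_\infty, \Sigma, \mu)$, the underlying reflexive sheaf $\E_\infty$ with $\psi_*\pi^*(\Gr^{HNS}(\underline{\E}))^{**}$, without constructing an explicit smooth model connection on $(\Gr^{HNS}(\underline{\E}))^{**}$ --- which is the step that forced the reflexivity hypothesis in \cite{CS1}. By Theorem $2.25$ of \cite{CS1}, the limit sheaf decomposes as $\underline{\E}_\infty = \bigoplus_j \underline{\F}_j$, polystable reflexive on $\C\P^{n-1}$ with each $\underline{\F}_j$ stable (after the Fubini-Study slope twist); the task therefore becomes algebro-geometric: show this polystable reflexive sheaf equals $(\Gr^{HNS}(\underline{\E}))^{**}$. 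Once this identification is in hand, the gauge equivalence of $A_\infty$ with the induced HYM cone connection follows from uniqueness of the admissible HYM metric on each stable reflexive summand (Bando-Siu).

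The core step is to produce a non-trivial sheaf morphism between $\underline{\E}$ and $\underline{\E}_\infty$ using only the Uhlenbeck convergence $\lambda_k^* A \to A_\infty$ on $\C^n_* \setminus \Sigma$. Because $\E \cong (\psi_*\pi^*\underline{\E})|_B$ is $\C^*$-equivariant as a germ at $0$, one has canonical identifications $\lambda_k^* \E \cong \E$ and all the rescaled connections live on the fixed sheaf $\E$. Fixing Uhlenbeck gauges in which the convergence is smooth on $\C^n_* \setminus \Sigma$, the associated $\bar\partial$-operators converge after complex gauge transformations, and limits of bounded local holomorphic sections of $\lambda_k^* A$ yield a non-zero bounded morphism on the smooth locus; Bando-Siu extension together with Hartogs-type removable singularity theorems then promote this to a global sheaf morphism $\Phi$ on $\C\P^{n-1}$. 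Slope comparison against the HN filtration of $\underline{\E}$, combined with the stability of each $\underline{\F}_j$ and the polystability of $\underline{\E}_\infty$, identifies the $\underline{\F}_j$ with the reflexive hulls of the HN graded pieces of $\underline{\E}$ by induction on filtration length, giving the claimed isomorphism independently of the subsequence.

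For the inclusion $\pi^{-1}(\text{Sing}(\Gr^{HNS}(\underline{\E}))) \subset \Sigma$, I would argue by contradiction: if the preimage of a point $p \in \text{Sing}(\Gr^{HNS}(\underline{\E}))$ were disjoint from $\Sigma$, then smooth Uhlenbeck convergence on a $\C^*$-invariant neighborhood of $\pi^{-1}(p)$ would promote $\Phi$ to a local sheaf isomorphism there, forcing the HN filtration of $\underline{\E}$ to be by locally free subsheaves near $p$ --- contradicting that $p$ lies in the singular set of $\Gr^{HNS}(\underline{\E})$. The principal obstacle throughout is the construction of $\Phi$: in \cite{CS1} one compared $A$ directly to a global smooth model HYM connection on the reflexive graded object, but here no such smooth model exists globally, and $\Phi$ must be extracted directly from the bare Uhlenbeck limit while handling the codimension $\geq 2$ singularities of the HNS graded pieces via sheaf extensions. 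This extraction of algebraic data from an Uhlenbeck limit, bypassing any smooth model connection on the graded object, is the ``different argument'' referenced in the introduction.
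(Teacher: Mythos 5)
Your high-level framing is correct --- the goal is indeed to identify $\underline\E_\infty=\oplus_j\underline\F_j$ with $(\Gr^{HNS}(\underline\E))^{**}$ without building a global smooth comparison model --- but the proposal has a genuine gap exactly at the point where the paper introduces its new machinery. You propose to ``produce a non-trivial sheaf morphism $\Phi$ between $\underline\E$ and $\underline\E_\infty$'' from limits of bounded holomorphic sections, and then to ``identify the $\underline\F_j$ with the reflexive hulls of the HN graded pieces ... by induction on filtration length.'' But a single morphism $\underline\E\to\underline\E_\infty$ (or even a family built from limits of unnormalized sections) cannot see the internal structure of the filtration: as the paper's introduction explains, once $\S_1$ is identified with $(\underline\E_1)^{\ast\ast}$, sections of $HG_2$ may, after the common normalization, limit entirely into $\S_1$, so the naive map to $\E_\infty/\S_1$ is zero. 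This is precisely the difficulty that forced the reflexivity hypothesis in \cite{CS1}: the $L^2$-projected sections are only known to be $L^2$-orthogonal to the limits of $HG_1$, and if $\underline\E_2/\underline\E_1$ is not reflexive those limits need not span the corresponding degree component of $\S_2$, so the projected limit could again be trivial. Your slope-comparison-plus-stability argument presupposes the existence of nonzero maps from each graded piece into distinct summands of $\underline\E_\infty$, which is exactly the thing that must be proved.

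The paper's ``different argument'' is not, as you suggest, the extraction of algebraic data without a smooth model per se, but the replacement of $L^2$-orthogonal projection by \emph{pointwise} orthogonal projection $\pi_{p-1}^\perp$ with respect to the unknown Hermitian-Einstein metric $H$, which forces the limit sections to be \emph{pointwise} orthogonal to the previously constructed summand $\oplus_{k<p}\S_k$ away from $\Sigma$, hence nontrivially land in the complement. Making this work requires the entire apparatus of Section~2: the high-curvature cut-off sets $E^r_j$, the good-cover/flat-disc Theorem~\ref{Cut-Off} that yields interior $L^\infty$-bounds on holomorphic sections up to a neighborhood of $\Sigma$, and the resulting convexity Proposition~\ref{prop2.18} establishing a well-defined degree $d^r_\F(s)$ and nontrivial limiting homogeneous sections (Proposition~\ref{prop2.23}). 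Your proposal invokes none of this and does not offer a substitute mechanism, so the key step --- nontriviality of the inductive maps when $\Gr^{HNS}(\underline\E)$ is not reflexive --- remains unproven. The argument for $\pi^{-1}(\Sing(\Gr^{HNS}(\underline\E)))\subset\Sigma$ is closer in spirit to the paper's statement $(b)_p$, but it relies on the morphisms $\psi_p$ being isomorphisms onto locally free summands away from $\Sigma$, which again depends on the missing construction.
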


The main motivation for the generalization in Theorem \ref{main} is that we also want to understand the analytic bubbling set in terms of the given complex geometric data. Given a torsion free sheaf $\F$, we define its singular set  $\text{Sing}(\F)$ to be the set where $\F$ fails to be locally free. 

\begin{thm}\label{Bubbling set}
Under the same hypothesis as Theorem \ref{main}, the analytic bubbling set $\Sigma$ is also independent of the choice of subsequences. Moreover, it agrees with the singular set $\Sigma^{alg}$ of $\pi^*(Gr^{HNS}(\underline \E))$ as a set and for each irreducible codimension $2$ component, the analytic multiplicity agrees with the algebraic multiplicity. In particular, the limit measure $\mu$ is also uniquely determined by $\underline \E$.
\end{thm}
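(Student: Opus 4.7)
The plan builds on Theorem \ref{main}. Since $\pi$ is a submersion, $\Sigma^{alg}=\pi^{-1}(\Sing(\Gr^{HNS}(\underline\E)))$ as sets, and the last assertion of Theorem \ref{main} already gives $\Sigma^{alg}\subset\Sigma$. So what remains for Theorem \ref{Bubbling set} is (a) the reverse inclusion $\Sigma\subset\Sigma^{alg}$, (b) equality of analytic and algebraic multiplicities along each codimension-two irreducible component, and (c) uniqueness of $\mu$. Item (c) is automatic from (a) and (b): $A_\infty$ is unique by Theorem \ref{main}, and $\mu=|F_{A_\infty}|^2\dVol+8\pi^2\nu$ is then determined once $\nu$ is.

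The main tool would be a Chern character identity at the level of currents. Set $\mathcal F:=\Gr^{HNS}(\underline\E)$. By Theorem \ref{main}, $A_\infty$ is the canonical cone HYM connection on $\psi_*\pi^*\mathcal F^{**}$; in particular it is smooth on $\C^n_*\setminus\pi^{-1}(\Sing(\mathcal F^{**}))$ and the current $\tr(F_{A_\infty}\wedge F_{A_\infty})\wedge\omega_0^{n-2}$ is locally integrable on $\C^n_*$ by Bando-Siu. Standard bubble analysis (see \cite{Tian}) yields, up to a universal dimensional constant $c_0>0$,
\begin{equation*}
\lim_{\lambda\to 0}\tr(F_{A_\lambda}\wedge F_{A_\lambda})\wedge\omega_0^{n-2}=\tr(F_{A_\infty}\wedge F_{A_\infty})\wedge\omega_0^{n-2}+c_0\,\nu\wedge\omega_0^{n-2}
\end{equation*}
as currents on $\C^n_*$. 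Algebraically, additivity of Chern characters along a filtration gives $c_2(\underline\E)=c_2(\mathcal F)$, while the double-dual sequence $0\to\mathcal F\to\mathcal F^{**}\to\mathcal Q\to 0$ (with $\mathcal Q$ supported in codimension at least two) yields a codimension-two cycle $[\mathcal Q]$ on $\P^{n-1}$ with $c_2(\mathcal F^{**})-c_2(\mathcal F)=-[\mathcal Q]$. Comparing these two computations, I expect the key identity
\begin{equation*}
c_0\,\nu=\pi^*[\mathcal Q]\quad\text{as codimension-two currents on }\C^n_*,
\end{equation*}
which gives both the set-theoretic identification of $\Sigma_b$ with the codimension-two part of $\pi^{-1}(\Sing(\mathcal F))$ and the equality of analytic and algebraic multiplicities component by component.

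The higher-codimension part of $\Sigma$ carries no bubble mass by definition. Outside $\Sigma_b\cup\pi^{-1}(\Sing(\mathcal F^{**}))$ the limit $A_\infty$ is smooth and the $L^2$ energy of $A_\lambda$ does not concentrate, so Uhlenbeck's $\epsilon$-regularity forces smooth convergence $A_\lambda\to A_\infty$ there. Since $\pi^{-1}(\Sing(\mathcal F^{**}))\subset\pi^{-1}(\Sing(\mathcal F))=\Sigma^{alg}$, this finishes $\Sigma\subset\Sigma^{alg}$.

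The main obstacle is justifying the identity $c_0\nu=\pi^*[\mathcal Q]$ rigorously. Two ingredients seem to be needed. First, a global Chern-Simons boundary-term evaluation on spherical shells, combined with admissibility of $A$ near $0$, to show that the rescaled total Chern character integrals converge and are computed purely by the algebraic invariants of $\underline\E$. Second, a local identification at a generic point of each codimension-two component of $\Sigma_b$: by the conical symmetry of $A_\infty$, the mass of $\nu$ at such a point should reduce to a transverse two-dimensional model, and in that model the bubble mass matches the length of $\mathcal Q$ at the corresponding point of $\Sing(\mathcal F)$ by the classical $c_2$-based energy quantization of ASD/HYM instantons.
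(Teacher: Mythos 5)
Your reduction is sound: $\Sigma^{alg}\subset\Sigma$ comes from Theorem \ref{main}, and establishing the current identity $c_0\nu=\pi^*[\mathcal Q]$ would indeed give the reverse inclusion, the multiplicity equality, and then the uniqueness of $\mu$. Your instinct to compare curvature integrals on transverse slices is also the right localization; it is essentially what the paper does via Tian's transverse-slice formula for $m_k^{an}$ and the Sibley--Wentworth transgression formula for $m_k^{alg}$. But your proposed justification of the key identity has a genuine gap, which you partly acknowledge and which your two ``ingredients'' do not close.

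The point is that the two transverse-slice computations do not directly agree. Sibley--Wentworth's formula for $m_k^{alg}$ contains, in addition to the curvature integral over the slice $\Delta$, a relative Chern--Simons boundary term on $\partial\Delta$ involving the sheaf comparison map $\tau:\pi^*\underline\E\to\pi^*(Gr^{HNS}\underline\E)^{**}$, whereas Tian's formula for $m_k^{an}$ has no such term. Subtracting, one gets (this is Corollary \ref{Cor4.2})
\[
m_k^{alg}=m_k^{an}-\lim_{i}\frac{1}{8\pi^2}\int_{\partial\Delta}CS(A_{j_i},\tau^*A_\infty),
\]
so the entire theorem hinges on this boundary Chern--Simons limit being zero. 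That does not follow from ``$c_2$-based energy quantization of ASD/HYM instantons'' (a statement on closed four-manifolds with no boundary transgression) nor from a ``global Chern--Simons evaluation on spherical shells'' (which at best controls total masses, not per-component multiplicities). The connections $A_{j_i}$ and $\tau^*A_\infty$ are given on $\partial\Delta$ in \emph{different} trivializations of the same topological bundle, so the boundary term is a priori a nontrivial multiple of $8\pi^2$ detected by a relative degree in $\pi_3$.

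What actually makes it vanish is a homotopy/degree argument (Proposition \ref{Prop4.3}) that is specific to the construction in Theorem \ref{main}: the comparison map $\tau$ is realized as a limit of the complex gauge transforms $g_i f_i$, where $g_i$ are the unitary Uhlenbeck gauges and $f_i=(H'^{-1}H_i)^{1/2}$ is defined over all of $B\setminus\{0\}$, hence $\deg(f_i)=0$ on $\partial\Delta$; consequently $\deg(P^{-1}\tau)=\deg(g_i)$ for $i$ large, the gauge-dependent parts of the boundary term cancel, and one is left with $\int_{\partial\Delta}CS(g_i\cdot A_{j_i},P^*A_\infty)\to0$ by smooth convergence away from $\Sigma$. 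This degree cancellation is the missing ingredient in your proposal; without it the current identity $c_0\nu=\pi^*[\mathcal Q]$ is not established.
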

For the definition of algebraic multiplicity we refer to Section \ref{UniquenessOfBubblingSet}. Notice $\text{Sing}(\E_\infty)\setminus\{0\}$ is obviously a subset of $\Sigma^{alg}$, and by Theorem \ref{main} the difference only appears when $Gr^{HNS}(\underline\E)$ fails to be reflexive. One particular interesting fact is that there are examples where 
$Gr^{HNS}(\underline\E)$ is not reflexive and its double dual is a direct sum of line bundles, so $\psi_*\pi^*(Gr^{HNS}(\underline\E))^{**}$ is trivial, i.e. $\psi_*\pi^*(Gr^{HNS}(\underline\E))^{**}\cong \O_{\C^n}^{\oplus\rk(\underline \E)}$. 

\begin{cor}\label{Example}
There exists an admissible Hermitian-Yang-Mills connection on a rank two reflexive sheaf over $\C\P^3$, such that at all of its singular points the analytic tangent cones have trivial flat connections but non-empty bubbling sets.
\end{cor}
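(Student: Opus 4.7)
The plan is to exhibit an explicit example; Theorems \ref{main} and \ref{Bubbling set} then supply the conclusion. I first construct a local model. Choose a non-empty zero-dimensional subscheme $W \subset \C\P^2$ (for definiteness, two points in general position). The classical Serre construction gives a non-split extension
$$0 \to \O \to \underline{\E} \to \I_W \to 0$$
with $\underline{\E}$ a rank two holomorphic vector bundle on $\C\P^2$ (local freeness follows from the Cayley--Bacharach property, which holds once the extension class in $\text{Ext}^1(\I_W, \O) \cong \C^{|W|}$ is non-vanishing at each point of $W$). Then $\underline{\E}$ has $c_1 = 0$ and is strictly slope-semistable; the displayed sequence is its Seshadri filtration, so
$$\Gr^{HNS}(\underline{\E}) = \O \oplus \I_W, \qquad (\Gr^{HNS}(\underline{\E}))^{**} = \O^{\oplus 2}.$$

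Next I globalize on $\C\P^3$. Pick a point $p \in \C\P^3$ and two distinct lines $L_1, L_2 \subset \C\P^3$ through $p$ whose tangent directions at $p$ realize the two points of $W$ inside the $\C\P^2$ of directions at $p$; set $C = L_1 \cup L_2$. This $C$ is Cohen--Macaulay and a local complete intersection. Apply the Serre construction on $\C\P^3$ to produce a rank two reflexive sheaf $\F$ sitting in
$$0 \to \O(-a) \to \F \to \I_C(b) \to 0,$$
with twists $a,b$ chosen so that $\F$ is slope-stable (arranged by taking $a+b>0$ sufficiently large and the extension class generic). The singular locus of $\F$ coincides with the singular locus of $C$, namely $\{p\}$. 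Since $C$ at $p$ is the affine cone over $W$, the extension class can be picked so that the germ $\F_p$ is isomorphic, after trivializing $\O(-a)$ and $\O(b)$ locally, to the germ of $\psi_*\pi^*\underline{\E}$ at $0 \in \C^3$. Stability of $\F$ together with Bando--Siu then gives an admissible Hermitian--Yang--Mills connection $A$ on $\F$.

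Finally I apply Theorem \ref{main} and Theorem \ref{Bubbling set} at the lone singular point $p$: the tangent cone connection $A_\infty$ is gauge equivalent to the natural HYM cone connection induced by $(\Gr^{HNS}(\underline{\E}))^{**} = \O^{\oplus 2}$, which is the trivial flat connection, while the bubbling set equals $\pi^{-1}(\Sing(\Gr^{HNS}(\underline{\E}))) = \pi^{-1}(W)$, which is non-empty. The main obstacle is verifying the two properties used in the globalization: slope-stability of $\F$ (controlled by a cohomological argument on the defining Serre sequence, bounding sections of potential destabilizing line subsheaves in twists of $\F$) and the local matching at $p$ (which reduces to comparing extension classes in $\text{Ext}^1(\I_C, \O)$ locally at $p$, arranged through the cone structure of $C$).
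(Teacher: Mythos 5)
Your route differs from the paper's. You build both the local model on $\C\P^2$ and the global sheaf on $\C\P^3$ via abstract Serre extensions, whereas the paper writes everything down with an explicit Koszul-type resolution: on $\C\P^3$ it takes $0\to\O\xrightarrow{\sigma}\O(2)\oplus\O(1)\oplus\O(2)\to\E\to 0$ with $\sigma=(z_1^2,z_2,z_3z_4)$, so that in the affine chart around either singular point one reads off the homogeneous local model $0\to\O\xrightarrow{(z_1^2,z_2,z_3)}\O^{\oplus 3}\to\E_2\to 0$ directly, and stability is checked at once from $H^0(\E(-3))=0$. The local picture you set up on $\C\P^2$ --- a strictly semistable rank-two bundle whose $\Gr^{HNS}$ is non-reflexive with locally free double dual --- is conceptually the same as the paper's $\underline\E_2$ (the paper's model has $c_1=4$ rather than $0$, which is immaterial), and your final deduction from Theorems \ref{main} and \ref{Bubbling set} is correct.

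The genuine gap is in the globalization, in exactly the step you flag but do not carry out. Theorems \ref{main} and \ref{Bubbling set} require the germ $\F_p$ to be isomorphic to $(\psi_*\pi^*\underline\E)|_B$ for a holomorphic \emph{vector bundle} $\underline\E$ on $\C\P^2$; this homogeneous-singularity hypothesis is a genuine restriction on $\F_p$ and is not automatic for a Serre extension supported on the cone $C=L_1\cup L_2$. To realize it you would need (i) to show that the cone germ $\psi_*\pi^*\underline\E$ is itself a Serre extension $0\to\O\to\psi_*\pi^*\underline\E\to\I_C\to 0$ with a definite local class $\xi_0\in\text{Ext}^1_{\O_p}(\I_{C,p},\O_p)$ (this does hold --- push $0\to\O\to\underline\E\to\I_W\to 0$ through $\psi_*\pi^*$, using $R^1\psi_*\O_{\C^3_*}=0$ and $\psi_*\pi^*\I_W=\I_C$ --- but it is not in your write-up); (ii) to show that some global class in $\text{Ext}^1_{\C\P^3}(\I_C(b),\O(-a))$ restricts, up to the local $\text{Aut}$-action, to $\xi_0$; and (iii) that among such global classes there is one making $\F$ stable. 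Point (iii) is the real difficulty: matching $\xi_0$ at $p$ is a nontrivial closed condition, whereas stability is what one normally verifies only for a \emph{generic} Serre extension, so you would need a separate argument that the constrained family still contains stable sheaves. Saying the local matching is ``arranged through the cone structure of $C$'' does not supply this. The paper's explicit global defining equations make all of (i)--(iii) immediate.
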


More generally, in view of Theorem \ref{Bubbling set}, we expect a strengthening of Conjecture \ref{Conj1.1}.

\begin{conj}
In part (II) of Conjecture \ref{Conj1.1} we define the algebraic bubbling set $\Sigma^{alg}$ to be the singular set of $\pi^*(Gr^{HNS}(\underline \E^{alg}))$. Then $\Sigma=\Sigma^{alg}$ for all tangent cones and for each irreducible codimension $2$ component, the analytic multiplicity and the algebraic multiplicity are equal. 
\end{conj}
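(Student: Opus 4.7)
The plan is to reduce the general case to the homogeneous case treated in Theorem \ref{Bubbling set} via an algebraic degeneration of $\E$ to its algebraic tangent cone. Conditional on part (II) of Conjecture \ref{Conj1.1}, the limit sheaf $\E_\infty$ is already $\psi_*\pi^*(\Gr^{HNS}(\underline\E^{alg}))^{**}$, so the singular set $\Sing(\E_\infty)\setminus\{0\}$ and the smooth locus of $A_\infty$ are determined by $\underline\E^{alg}$. What remains is to capture the codimension $2$ bubbling that lies in $\Sigma\setminus(\Sing(\E_\infty)\setminus\{0\})$ and to match its analytic multiplicity with the algebraic multiplicity of the corresponding components of $\pi^*(\Gr^{HNS}(\underline\E^{alg}))$.

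First I would interpret the rescaling $z\mapsto \lambda z$ as simultaneously producing (i) a family of connections $A_\lambda$ converging modulo bubbling to $A_\infty$, and (ii) a flat family of reflexive sheaves $\lambda^*\E$ whose algebraic limit, by the very definition of an algebraic tangent cone, is the homogeneous sheaf $\psi_*\pi^*\underline\E^{alg}$. The goal is to compare $A$ with a model admissible HYM connection $A^{mod}$ on this homogeneous sheaf over a small ball, and to show the two give the same triple $(A_\infty,\Sigma,\mu)$ in the limit. Since $A^{mod}$ is covered by Theorem \ref{Bubbling set}, its analytic bubbling set is exactly the singular set of $\pi^*\Gr^{HNS}(\underline\E^{alg})$ with the correct multiplicities, and the identification would then transfer to $A$.

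To carry out the comparison I would follow the strategy of \cite{CS1} and of Theorem \ref{main}: construct on each rescaled ball an interpolating Hermitian metric between $A$ and $A^{mod}$ using the algebraic identification of $\lambda^*\E$ with $\psi_*\pi^*\underline\E^{alg}$ modulo higher-order terms in $\lambda$, and estimate the resulting curvature difference in $L^2$. Provided the error tends to zero as $\lambda\to 0$, the weak limits of $|F_{A_\lambda}|^2\dVol$ and $|F_{A^{mod}_\lambda}|^2\dVol$ agree on $\C^n\setminus\{0\}$, so their $\nu$ parts, and hence $\Sigma_b$ together with the analytic multiplicities, coincide. The lower-stratum part of $\Sigma$ is the essential singular set of $A_\infty$, which under Conjecture \ref{Conj1.1}(II) is intrinsic to $\underline\E^{alg}$.

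The main obstacle, I expect, is twofold. First, Conjecture \ref{Conj1.1} is itself open, so unconditionally even the target $\underline\E^{alg}$ of the identification is not yet defined; the two conjectures may well have to be resolved in tandem, with a canonical construction of $\underline\E^{alg}$ coming out of the Harder-Narasimhan-Seshadri filtrations of the restrictions of $\E$ to small spheres. Second, and more seriously, $\underline\E^{alg}$ need not be locally free, so the homogeneous model $\psi_*\pi^*\underline\E^{alg}$ carries a nontrivial singular set along which one must analyze Bando--Siu admissible HYM connections; extending the comparison estimate across this locus, and ruling out any extra bubbling \emph{created} by the deformation from $\E$ to its tangent cone rather than intrinsic to the cone itself, looks like the central technical difficulty.
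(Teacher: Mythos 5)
This statement is labeled as a Conjecture in the paper, and the paper offers no proof of it; the only case the paper actually establishes is the homogeneous one, via Theorem \ref{main} and Theorem \ref{Bubbling set}, where $\E \cong \psi_*\pi^*\underline\E$ for a locally free $\underline\E$. So there is nothing in the paper to compare your argument against, and any complete proof you gave would be new.

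That said, your proposal reads as a plausible high-level plan rather than a proof, and you are admirably candid about this. You correctly identify the two serious obstacles: that part (II) of Conjecture \ref{Conj1.1} is itself open (so the comparison target $\underline\E^{alg}$ is not even unconditionally defined), and that $\underline\E^{alg}$ need not be locally free, which is precisely the regime the present paper had to work hard to handle even in the homogeneous setting. The reduction you sketch — degenerate $\lambda^*\E$ to the homogeneous model $\psi_*\pi^*\underline\E^{alg}$, build interpolating metrics, control the curvature difference in $L^2$, and then import Theorem \ref{Bubbling set} — is the natural thing to try, and it mirrors the Chern--Simons transgression comparison used in Section \ref{UniquenessOfBubblingSet} in spirit. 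But the key analytic step, showing that the error in the metric comparison decays fast enough to prevent extra bubbling being ``created'' by the degeneration, is exactly where all the difficulty lies, and you have not indicated a mechanism for it. In the homogeneous case the paper controls this by the convexity/three-circle machinery of Section 2 and the pointwise-projection argument of Section 3; in the general case one would presumably need a genuinely new quantitative estimate relating the $j$-th rescaling of $\E$ to its algebraic tangent cone. So: the proposal is a sensible roadmap that correctly situates the problem relative to what is proved in this paper, but it is conditional, incomplete, and does not yet engage with the hard estimate it would need. That is consistent with the fact that the paper leaves the statement as a conjecture.
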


We now explain the main ideas in the proof of Theorem \ref{main}.  
 In the proof in \cite{CS1}, the technical restriction on $Gr^{HNS}(\underline \E)$ being reflexive is already needed when $\underline \E$ is semistable. So in the following we shall focus on the case when $\underline\E$ is semistable and the unstable case imposes no essential extra difficulties.
 
   It is known  that given an analytic tangent cone $(A_\infty, \Sigma, \mu)$, both the singular set of $A_\infty$ and the bubbling set $\Sigma$ are $\C^*$ invariant (see Lemma $5.5.1$ in \cite{Tian} and Theorem $2.23$ in \cite{CS1}), so we can assume our tangent cone is (up to isomorphism) given by the rescaling along a subsequence of the fixed sequence $\{\lambda_j:=2^{-j}\}$. For simplicity we denote by $A_j$ the pull-back connection $\lambda_j^*A$ on the ball $B$. Notice by the nature of tangent cones it suffices to restrict our attention to the unit ball $B$. 

Recall in \cite{CS1} we view $A$ as the Chern connection of an admissible Hermitian-Einstein metric $H$ on the reflexive sheaf $\E$. For each holomorphic section $s$ of $\E$ we defined the notion of the \emph{degree}  $d(s)$, which is a number that measures the vanishing order of $s$ at $0$, with respect to the unknown metric $H$. The fact that $d(s)$ is well-defined depends on a key convexity property similar to the classical three circle lemma. Assuming $\underline \E$ is semistable we proved that the degree of all non-zero sections of the form $\pi^*\underline s$ with $\underline s\in H^0(\C\P^{n-1}, \underline \E)$ is all the same  and is given by an explicit formula in terms of  the slope  of $\underline\E$. Under the rescalings, any non-zero holomorphic section $\pi^*\underline s$, by passing to subsequences and by a suitable normalization, gives rise to holomorphic sections on any tangent cone $\E_\infty$,  which are homogeneous of the degree $d(s)$ with respect to the natural cone structure on $\E_\infty$. 

 Now let $0\subset\underline \E_1\subset\cdots \underline\E_m=\underline \E$ be a Seshadri filtration of $\underline\E$, and the goal is to build isomorphisms from each quotient $\psi_*\pi^*(\underline \E_p/\underline \E_{p-1})^{**}$ to a direct summand of $\E_\infty$. By tensoring with some $\O(k)$ we can always assume each $\underline\E_p/\underline \E_{p-1}$ is generated by global holomorphic sections of $\underline \E_p$, and we denote by $HG_p$ the sections of the $\E$ of the form $\psi_*\pi^*s$ with $s\in H^0(\C\P^{n-1}, \underline \E_p)$. Then under the rescaling map, sections in $HG_p$ give rise to sections of $\E_\infty$ and they can be used to build non-trivial maps from $\E_p$ to $\E_\infty$. Here for each $j$, we need to normalize the sections in $HG_p$ by a common factor depending on $j$, so that the limit map is well-defined. 
 
When $p=1$ using the stability of $\E_1$ we obtain a splitting $\E_\infty=\S_1\oplus \V_1$ which is orthogonal on the locally free part, such that sections of $HG_1$ yields an isomorphism between $\E_1$ and $\S_1$ which descends to an isomorphism between $\underline \E_1$ and $\underline \S_1$ on $\C\P^{n-1}$. Notice $\underline \E_1$ is always reflexive, see Remark $2.8$ in \cite{CS1}. 

When $p=2$, a complication arises since it could happen that the sections of $HG_2$, under normalization by a common factor, may limit to sections of $\S_1$ too. So this does not immediately give rise to a new direct summands of $\E_\infty$. The approach we take in \cite{CS1} is that for each $j$, we perform $L^2$ orthogonal projection of elements in $HG_2\setminus HG_1$ on the rescaled ball $B$, to the orthogonal complement of $HG_1$.  Then we proved that the projected sections, after a common normalization, still give rise to holomorphic sections in $\E_\infty$. These generate a homomorphism $\psi: \E_2/\E_1\rightarrow \V_1$. 

By construction these limit sections are $L^2$ orthogonal to the sections in $\E_\infty$ that arise as limits of $HG_1$, hence are $L^2$ orthogonal to sections of $\S_1$ of the same homogeneous degree. From this we conclude $\psi$ is non-trivial by using $\underline \E_1$ being reflexive. Then using the stability of $\E_2/\E_1$ we get a splitting 
 $$\V_1=\S_2\oplus \V_2 $$
 and we get an map $\E_2/\E_1\rightarrow \S_2$ that induces an isomorphism $(\E_2/\E_1)^{**}\simeq \S_2$. 
 
Now we can try to continue this process, but we meet serious issues when $p\geq 3$. One can still construct the map from $\E_3/\E_2\rightarrow \V_2$ by the $L^2$ projection technique. However it is no longer easy to see that this map is \emph{non-trivial}. The reason is that by construction we only know the limits of the projected sections are $L^2$ orthogonal to the limit sections of $HG_2$, but if $\E_2/\E_1$ is not reflexive these latter sections do not necessarily span all the sections of $\S_2$ of the same homogeneous degree. There are possible ways to get around this difficulty when $p=3$. But we find this argument become tedious and very complicated when $p$ becomes larger. 
 
Instead in this paper we use a new idea to overcome this issue, and the key point is to replace $L^2$ projection by a \emph{pointwise} orthogonal projection. This will help overcome the above issue but in the meantime create new technical points that we now discuss. If we assume all the tangent cones consist of smooth connections without  bubbling set so that the rescaled connections converge smoothly, then it is relatively easy to see that the pointwise orthogonal projection still possesses convexity (in the form of a three circle type lemma) so that one can almost repeat the proof in \cite{CS1}. However this assumption can not be guaranteed  a priori and a posteriori by our main results it must not be satisfied if one of the factor $\E_p/\E_{p-1}$ is not locally free. Consequently in general we can only perform pointwise orthogonal projection away from the union of the singular sets of $\E_p/\E_{p-1}$, and in order this orthogonal projection behaves well as $j\rightarrow\infty$ we need to work on the complement of the analytic bubbling set $\Sigma$. 
 
 Now for simplicity of discussion we first assume $\Sigma$ is independent of the choice of analytic tangent cones, then we can simply cut off a fixed small neighborhood of $\Sigma$, and do pointwise orthogonal projection on the complement, say $\Omega$. Then we meet a common issue as in many problems in geometric analysis, namely, how do we take non-trivial limits of the projected sections as $j\rightarrow\infty$. If we normalize any reasonable norm to be 1, then general elliptic theory only guarantees interior estimates, and we can not exclude the possibility that the limit is zero, unless we can derive the estimates near the boundary of $\Omega$. Such an estimate can not follow from general elliptic theory, and it is at this point we rely crucially on the  complex geometry: the fact that the bubbling set $\Sigma$ is a complex-analytic subvariety of codimension at least 2 allows us to get uniform estimates on $\Omega$ for a fixed subsequence. Roughly speaking, one can choose $\Omega$ and a relatively compact $\Omega'\subset \Omega$ so that every point in $\Omega\setminus \Omega'$ lies on a holomorphic disc $D$ which is contained in the complement of $\Sigma$ and with boundary $\p D$ contained in $\Omega'$. Then we can restrict a holomorphic section $s$ to $D$ and use maximum principle on $D$ to conclude that the $L^\infty$ norm of $s$ over $\Omega$ can be uniformly controlled by the $L^\infty$ norm of $s$ over $\Omega'$, which can be controlled by the $L^2$ norm of $s$ over $\Omega$. This improved estimate on $s$ allows us to adapt most techniques in \cite{CS1} to the new setting to prove a key convexity result (Proposition \ref{prop2.18}). Using this and the Hartogs extension property of holomorphic sections, we are able to obtain limit holomorphic sections which, away from $\Sigma$, are pointwisely orthogonal to the sub-bundle of $\E_\infty$ obtained in the previous induction step. This then fixes the issue in the above discussion. 
 
 In general we do not know a priori that the bubbling set $\Sigma$ is independent of the choice of analytic tangent cones, and a priori the union of the bubbling sets of all the tangent cones could be the whole $\C^n_*$,  so we can not a priori cut off the region in terms of neighborhood of bubbling sets. Instead for each $j$ we shall cut off an intrinsic \emph{high curvature} region that depends on $j$, which as $j$ tends to infinity should be close to the neighborhoods of bubbling sets. This is a very delicate point and we refer to Section 2 for details. In Section 3 we shall prove Theorem \ref{main} following the above line of discussion.  
   
 The proof of Theorem \ref{Bubbling set} is essentially a direct consequence of Theorem \ref{main}, together with the formula on representing the analytic multiplicity in terms of curvature concentration (see Lemma $4.1$ in \cite{SW}), and a formula on representing the algebraic multiplicity in terms of a Chern-Simons transgression form (see Equation $(4.5)$ in \cite{SW}). This has been used to identify the analytic multiplicties and the algebraic multiplicities for the blow-up locus in the Hermitian-Yang-Mills flow case (see \cite{SW}). For the convenience of readers we will make a self-contained discussion in our setting. 

\

\noindent \textbf{Acknowledgements:} Both authors are supported by  the Simons Collaboration Grant on Special Holonomy in Geometry, Analysis, and Physics (488633, S.S.).   S. S. is partially supported by an Alfred P. Sloan fellowship and NSF grant DMS-1708420.  This paper forms a 
part of X.M. Chen's PHD thesis in Stony Brook University. He would also like to thank UC Berkeley for hospitality during his visit between January 2018 and May 2019, when this paper was written. Both authors are thankful to the anonymous referees for numerous suggestions which greatly improved the exposition of the paper.

\

\section{A convexity result}
In this section, let $(\E, A)$ be an admissible Hermitian-Yang-Mills connection over $(B=\{|z|<1\}\subset \C^n, \omega)$ with an isolated singularity at $0$.  As in \cite{CS1} we will often omit the volume form in an integral, and it is understood that we use the natural volume form underlying the connection.  In the following, the closure of a set is always taken in $\mathbb{C}^n_*$.

\subsection{Analytic tangent Cones}\label{Tangent Cone}
We first recall known results (c.f. \cite{ Nakajima, Price, Tian, UY}) on the convergence of a sequence of Hermitian-Yang-Mills connections with locally uniformly bounded Yang-Mills energy, adapted to our setting of getting analytic tangents cones. 

As in the introduction, for any $\lambda \in (0, 1]$ we consider the rescaling map defined by 
$$\lambda: B_{\lambda^{-1}}\rightarrow B; z\mapsto\lambda z$$ and denote 
$$A_{\lambda}:=\lambda^* A.$$
Then $A_{\lambda}$ is Hermitian-Yang-Mills with respect to the metric $\omega_\lambda:=\lambda^{-2} \cdot \lambda^*\omega$. 
 Given any subsequence $\lambda_i\rightarrow 0$, by Price's monotonicity formula \cite{Price} (see also Page 20, Remark 3 in \cite{Tian}), for any $R>0$,  the sequence $\{A_{\lambda_i}\}_i$ has uniformly bounded Yang-Mills energy over ${B_R}\setminus\{0\}$. Then by Uhlenbeck's compactness result (\cite{Nakajima, Tian, UY}) after passing to a subsequence, we may assume $\{A_{\lambda_i}\}_i$ converges locally smoothly to $A_\infty$ on $\C^n_*\setminus \Sigma$ modulo gauge transformations, where $\Sigma$ is a closed subset of $\C^n_*$ so that the Hausdorff $(2n-4)$ measure of $\Sigma\cap B_R$ is finite for any fixed $R>0$. More explicitly, we have
\begin{equation}\label{eqnBubblingSet}
\Sigma= \{z\in \C^n_*| \lim_{r\rightarrow0}\liminf_{i\rightarrow\infty} r^{4-2n}\int_{B_r(z)}|F_{A_{\lambda_{i}}}|^2 \geq \epsilon_0\}
\end{equation}
where $\epsilon_0>0$ denotes the constant in the $\epsilon$-regularity theorem (see Equation $(3.1.4)$ in \cite{Tian}). We denote by $\text{Sing}(A_\infty)$  the set of essential singularities of $A_\infty$ on $\C^n_*$ i.e. where $A_\infty$ can not be extended smoothly after a gauge transform on $\C^n_*$. Clearly $\Sing(A_\infty)\subset\Sigma$, but in general $\text{Sing}(A_\infty)$ may be strictly smaller due to the removable singularities of $A_\infty$. Passing to a further subsequence,  we may assume that the sequence of Radon measures $\{\mu_i:=|F_{A_{\lambda_i}}|^2\dvol_{\omega_{\lambda_i}}\}_i$ converge weakly to $\mu$ on $\C^n$. We define the triple $(A_\infty, \Sigma, \mu)$ to be an \emph{analytic tangent cone} of $A$ (associated to the chosen subsequence), and $\Sigma$ is called the \emph{analytic bubbling set}. For simplicity of notation, we denote
 $$
\lim_{i\rightarrow \infty} A_{\lambda_i} = (A_\infty, \Sigma, \mu). 
$$
By Fatou's lemma, there exists an non-negative measure $\nu$ on $\C^n$ so that
$$
\mu=|F_{A_\infty}|^2 \dVol_{\omega_0}+8\pi^2\nu.
$$
By Equation $(3.1.11)$ in \cite{Tian}, $\text{supp}(\nu)\setminus \{0\}$ is the \emph{blow-up locus} $\Sigma_b$ of the sequence $\{A_{\lambda_i}\}_i$ given as 
$$\Sigma_b=\overline{\{x\in \C^n_*|\Theta(\mu, x)>0,\lim_{r\rightarrow 0} r^{4-2n}\int_{B_r(x)}|F_{A_\infty}|^2=0 \}}.$$
where $\Theta(\mu, x):=\lim_{r\rightarrow0}r^{4-2n}\mu(B_r(x))$ is called the density function. It is easy to see that
\begin{equation}
\Sigma=\Sigma_b\cup \text{Sing}(A_\infty). 
\end{equation}

The removable singularity theorem in \cite{BS} implies that $A_\infty$ defines a reflexive sheaf $\E_\infty$ on $\C^n$, and we have
$$\Sing(A_\infty)=\Sing(\E_\infty)\setminus\{0\}.$$
In particular $\Sing(A_\infty)$ is a complex-analytic subvariety of $\C^n_*$. As a consequence of the monotonicity formula, Tian (\cite{Tian}, Lemma 5.3.1) proved that the connection $A_\infty$ is radially invariant, so is its singular set $\Sing(A_\infty)$. Therefore $\Sing(A_\infty)$ is $\C^*$ invariant, which implies $\pi(\Sing(A_\infty))$ is an algebraic subvariety of $\C\P^{n-1}$ (see Theorem 2.23 in \cite{CS1}). Also the invariance of $A_\infty$ implies that  for any $r\in (0, 1)$,  the function $$z\mapsto (|z|r)^{4-2n}\int_{B_{|z|r}(z)} |F_{A_\infty}|^2$$ is invariant under the natural $\C^*$ action on $\C^n_*$. This can be easily seen from the elementary fact that $|F_{A_\infty}|^2(t z)= |t|^{-4}|F_{A_\infty}|^2(z)$ for any $t \in \C^*$. By Theorem 4.3.3 in \cite{Tian}, we know\footnote{The proof in \cite{Tian} is written in the case of compact manifolds, but as remarked in \cite{Tian} (Remark 5), one only requires the boundedness of local Yang-Mills energy, which is valid in our case due to Price's monotonicty formula. 
} that $\Sigma_b$ is also a complex-analytic subvariety of $\C^n_*$ of pure codimension two (see also Lemma $3.2.3$ in \cite{Tian}), with finitely many irreducible components $\Sigma_k$, and there are positive integers $m_k$ such that the following current equation holds on $\C^n_*$
\begin{equation}\label{eqncurrent}
\lim_{i\rightarrow \infty} \frac{1}{8\pi^2}\tr(F_{A_{\lambda_i}}\wedge F_{A_{\lambda_i}})=\frac{1}{8\pi^2}\tr(F_{A_\infty} \wedge F_{A_\infty})+\sum m_k^{an}[\Sigma_k].
\end{equation}
In particular, 
$$\nu=\sum_k  m_k^{an}\mathcal{H}^{2n-4}|_{\Sigma_k}$$
where $\mathcal{H}^{2n-4}|_{\Sigma_k}$ denotes the $(2n-4)$ dimensional Hausdorff measure on $\Sigma_k$. In the following, abusing notation, we consider $\Sigma_b$ as a set with multiplicities $\Sigma_b=\sum_k m_k^{an} \Sigma_k$ (This will only be used in Section \ref{UniquenessOfBubblingSet}). Again by Lemma 5.3.1 in \cite{Tian}, we know $\Sigma_b$ is also radially invariant, hence it is also invariant under $\C^*$ action. 

Summarizing the above we have
\begin{lem} \label{lem3.3}
$\Sigma=\pi^{-1}(\underline \Sigma)$ where $\underline \Sigma$ is a subvariety of $\C\P^{n-1}$ of complex codimension at least $2$.
\end{lem}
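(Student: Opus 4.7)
The plan is to deduce the statement directly from the decomposition $\Sigma = \Sigma_b \cup \Sing(A_\infty)$ recorded just above, by verifying that each piece is a $\C^*$-invariant complex-analytic subvariety of $\C^n_*$ of complex codimension at least $2$, and then pushing down to $\C\P^{n-1}$. Since nearly all of the inputs are already stated in the surrounding text, this is essentially a bookkeeping step; I expect no serious obstacle, and I would flag only the codimension count for $\Sing(A_\infty)$ as the place where one must invoke a geometric fact not explicitly recalled on the page.

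First, I would collect what has already been proved in the preceding discussion. By Theorem $4.3.3$ in \cite{Tian}, $\Sigma_b$ is a complex-analytic subvariety of $\C^n_*$ of pure complex codimension $2$, and by Lemma $5.3.1$ in \cite{Tian} it is $\C^*$-invariant. For $\Sing(A_\infty)$, the Bando-Siu removable singularity theorem gives $\Sing(A_\infty) = \Sing(\E_\infty)\setminus\{0\}$ where $\E_\infty$ is a reflexive coherent analytic sheaf on $\C^n$; the singular set of a reflexive analytic sheaf on a smooth complex manifold is a closed analytic subset of complex codimension at least $3$, and its $\C^*$-invariance again follows from the radial invariance of $A_\infty$ (Lemma $5.3.1$ in \cite{Tian}). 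Consequently $\Sigma$ is the union of two $\C^*$-invariant closed analytic subvarieties of $\C^n_*$, of codimensions $2$ and $\geq 3$ respectively.

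Next, since $\pi: \C^n_* \to \C\P^{n-1}$ is a holomorphic principal $\C^*$-bundle (in particular a submersion admitting local holomorphic sections), every $\C^*$-invariant closed subset $V\subset \C^n_*$ equals $\pi^{-1}(\pi(V))$, and $\pi(V)$ inherits the structure of a closed analytic subvariety of $\C\P^{n-1}$ from that of $V$; by Chow's theorem it is algebraic. Setting $\underline{\Sigma} := \pi(\Sigma_b)\cup \pi(\Sing(A_\infty))$ therefore gives an algebraic subvariety with $\Sigma = \pi^{-1}(\underline{\Sigma})$.

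Finally, for the codimension claim I would use the elementary fact that for any $\C^*$-invariant analytic subvariety $V\subset \C^n_*$ one has $\dim_\C V = \dim_\C \pi(V) + 1$, so that the codimension of $\pi(V)$ in $\C\P^{n-1}$ equals the codimension of $V$ in $\C^n$. Hence $\pi(\Sigma_b)$ has codimension $2$ and $\pi(\Sing(A_\infty))$ has codimension $\geq 3$ in $\C\P^{n-1}$, and their union $\underline{\Sigma}$ has complex codimension at least $2$, completing the proof.
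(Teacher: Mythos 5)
Your proof is correct and follows the same route as the paper, which presents the lemma as a summary of the immediately preceding discussion: Tian's results give that $\Sigma_b$ is a $\C^*$-invariant complex subvariety of pure codimension two, while the Bando–Siu identification $\Sing(A_\infty)=\Sing(\E_\infty)\setminus\{0\}$ together with the $\C^*$-invariance and the reflexivity of $\E_\infty$ handles the other piece. You have simply made the bookkeeping explicit, including the codimension bound ($\geq 3$, though $\geq 2$ suffices) for the singular locus of a reflexive sheaf, which the paper uses implicitly.
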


Now fix a smooth point $z\in \Sigma_k$, and let $\Delta$ be a \emph{transverse slice} at $z$, i.e. $\Delta$ is a smooth complex two dimensional submanifold in $B$ such that $\Delta$ is transversal to $\Sigma_k$. The following is proved in \cite{SW} (see Lemma 4.1) and the argument is purely local. To make it more contained, we will roughly explain why it is true and for more details we refer the reader to \cite{SW}. 

\begin{lem}\label{AnalyticMultiplicity}
For  $\Delta$ which is a transverse slice at a generic point  $z\in \Sigma_k$, we have
\begin{equation}\label{AnalyticMultiplicityeq}
m^{an}_{k}=\lim_{i\rightarrow \infty} \frac{1}{8\pi^2} \int_{\Delta} \big\{\tr(F_{A_{\lambda_i}}\wedge F_{A_{\lambda_i}})-\tr(F_{A_{\infty}}\wedge F_{A_{\infty}})\big\}.
\end{equation}
\end{lem}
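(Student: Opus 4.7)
The proof is local near a generic smooth point $z$ of $\Sigma_k$, combining the current identity \eqref{eqncurrent} with a Chern-Simons transgression argument on $\Delta$, following \cite{SW}.

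By genericity I would arrange, in a small polydisc neighborhood $U$ of $z$: that $\Sigma_b\cap U = \Sigma_k\cap U$ is a smooth complex submanifold disjoint from the other $\Sigma_j$; that $\Delta$ meets $\Sigma_k$ transversally only at $z$; and that $A_\infty$ is smooth on $U$ (the last point uses the Bando-Siu removable singularity theorem \cite{BS} together with genericity of $z$ to avoid $\Sing(A_\infty)$). Choose holomorphic coordinates $(w_1,\dots,w_n)$ on $U$ with $\Sigma_k=\{w_1=w_2=0\}$ and $\Delta=\{w_3=\cdots=w_n=0\}$, and fix a small $\delta>0$.

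On $\Delta\setminus B_\delta(z)\subset \C^n\setminus \Sigma$, ambient Uhlenbeck compactness provides gauges in which $A_{\lambda_i}|_\Delta\to A_\infty|_\Delta$ smoothly. The Chern-Simons transgression form
\[
Q(A_{\lambda_i},A_\infty):=2\int_0^1\tr\bigl((A_{\lambda_i}-A_\infty)\wedge F_{(1-t)A_\infty+tA_{\lambda_i}}\bigr)\,dt,
\]
which satisfies $dQ=\tr(F_{A_{\lambda_i}}\wedge F_{A_{\lambda_i}})-\tr(F_{A_\infty}\wedge F_{A_\infty})$, therefore converges smoothly to zero on $\Delta\setminus B_\delta(z)$ in those gauges. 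Applying Stokes, the boundary integrals of $Q$ over $\partial\Delta$ and over $\partial B_\delta(z)\cap\Delta$ both vanish as $i\to\infty$, giving
\[
\lim_{i\to\infty}\int_{\Delta\setminus B_\delta(z)}\frac{\tr(F_{A_{\lambda_i}}\wedge F_{A_{\lambda_i}})-\tr(F_{A_\infty}\wedge F_{A_\infty})}{8\pi^2}=0.
\]
Since $\int_{B_\delta(z)\cap \Delta}\tr(F_{A_\infty}\wedge F_{A_\infty})/(8\pi^2)\to 0$ as $\delta\to 0$ by smoothness of $A_\infty$ on $U$, the lemma reduces to the concentration identity
\[
\lim_{\delta\to 0}\lim_{i\to\infty}\int_{B_\delta(z)\cap \Delta}\frac{\tr(F_{A_{\lambda_i}}\wedge F_{A_{\lambda_i}})}{8\pi^2}=m_k^{an}.
\]

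The main obstacle is this concentration identity, which I would extract from \eqref{eqncurrent} by testing against the family of $(n-2,n-2)$-forms
\[
\eta_\epsilon:=\chi_\epsilon(w_3,\dots,w_n)\bigwedge_{j=3}^n \tfrac{\sqrt{-1}}{2}\,dw_j\wedge d\bar w_j,
\]
where $\chi_\epsilon$ is a smooth nonnegative bump on $\C^{n-2}$ with unit integral and support shrinking to $0$. By Fubini and the transversality condition $\int_{\Sigma_k}\eta_\epsilon=1$, one obtains
\[
\lim_{i\to\infty}\int_{\C^{n-2}}\chi_\epsilon(w')\sigma_i(w')\,dV(w')=\int_{\C^{n-2}}\chi_\epsilon(w')\sigma_\infty(w')\,dV(w')+m_k^{an},
\]
with $\sigma_\bullet(w'):=\int_{\Delta_{w'}\cap B_\delta(z)}\tr(F_{A_\bullet}\wedge F_{A_\bullet})/(8\pi^2)$ the slice integral over the transverse slice $\Delta_{w'}$ at position $w'$. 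Extracting the pointwise identity at $w'=0$ requires exchanging $\lim_i$ and $\lim_\epsilon$; this is the delicate step, to be handled by applying the Chern-Simons argument above uniformly across nearby slices $\Delta_{w'}$ (each meets $\Sigma_k$ transversally at a single point for $|w'|$ small), yielding equicontinuity of $\sigma_i$ at $w'=0$ uniformly in $i$. This uniform slice-wise control is the crux of the argument and is essentially the content of \cite[Lemma 4.1]{SW}.
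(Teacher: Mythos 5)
The paper does not give a proof of this lemma; it cites \cite{SW} (Lemma 4.1 there), noting the argument is purely local, so there is no internal argument to compare your proposal against.

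Your reduction is correct: the Chern--Simons transgression argument on $\Delta\setminus B_\delta(z)$, together with the smoothness of $A_\infty$ at a generic $z\in\Sigma_k$, correctly shows the lemma is equivalent to the concentration identity on $\Delta\cap B_\delta(z)$. The gap is the last step. Testing the current identity \eqref{eqncurrent} against forms concentrating on $\Sigma_k$ only yields the \emph{averaged} slice identity; weak convergence of currents does not commute with slicing, so to extract the pointwise statement at $w'=0$ you need equicontinuity of $\sigma_i(w')$ in $w'$, uniformly in $i$. You identify this as the crux but do not prove it, and instead point back to \cite{SW}, the very reference you set out to reprove. A self-contained finish is available and worth spelling out: for two nearby parallel slices $\Delta_{w'}$ and $\Delta_{w''}$, apply Stokes to the closed $4$-form $\tr(F_{A_{\lambda_i}}\wedge F_{A_{\lambda_i}})$ on the five-real-dimensional shell they cobound. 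This expresses the difference of slice integrals as an integral over the lateral boundary, a region at positive distance from $\Sigma$; there $|F_{A_{\lambda_i}}|$ is bounded uniformly in $i$ by the smooth Uhlenbeck convergence away from $\Sigma$, so the difference is $O(|w'-w''|)$ uniformly in $i$. With this equicontinuity in hand, your averaged identity does yield the pointwise one. As written, though, the proposal is a sound outline of the \cite{SW} strategy rather than a complete proof.
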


\begin{proof}
For any generic point $z\in \Sigma_k$ where $\Sigma_k$ is smooth, given any $\delta>0$, abusing notation, we use $\delta$ to denote the rescaling map of $B$ centered at $z$ and $V:=T_z\Sigma_k$. We denote $A_{\lambda_i, \delta}=\delta^* A_{\lambda_i}$ and $A_{\infty, \delta}=\delta^* A_{\infty}$. Then there exists a sequence $\{\delta_i\}$ so that $\delta_i \rightarrow 0$ and
$$
m_k^{an}=\frac{1}{8\pi^2}\int_{V^{\perp}\cap B_z(1)} Tr(F_{A_{\lambda_{j_i}, \delta_i}} \wedge F_{A_{\lambda_{j_i}, \delta_i}})- Tr(F_{A_{\infty, \delta_i}} \wedge F_{A_{\infty, \delta_i}})
$$
(see Equation (4.2.7) in \cite{Tian}). Here we identify the tangent space of $B$ at $z$ with $\C^n$ naturally and $V^{\perp}$ denotes the orthogonal complement of $V$ in $\C^n$. By doing integration by parts, it is shown in \cite{SW} that the term on the right hand side above differs from that in Equation \ref{AnalyticMultiplicityeq}
by the limit of some boundary term, which converge to zero. This finishes the proof. 
\end{proof}

\begin{rmk}\label{rmk2.3}
Lemma \ref{AnalyticMultiplicity} holds for any irreducible codimension $2$ subvariety  $\Sigma_k$ which is not necessarily a component of $\Sigma$. Indeed, $m_k^{an}=0$ in this case. 
\end{rmk}

The radial invariance of tangent cones has a few easy consequences, which will be used frequently later. 
\begin{cor}\label{cor2.5}
For $z\in B\setminus \{0\}$,
\begin{enumerate}[(a).]
\item $\lim_{i\rightarrow\infty}\mu_i(B_r(z))=\mu(B_{r}(z))$ for $r<|z|$;
\item $\lim_{s\rightarrow r} \mu(B_s(z))=\mu(B_{r}(z))$;
\item $\lim_{i\rightarrow\infty} \mu_i(B_{r_i}(z_i))=\mu(B_r(z))$, for  $z_i\rightarrow z, r_i\rightarrow r$.
\end{enumerate}
\end{cor}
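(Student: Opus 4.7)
All three parts follow from the single claim that $\mu(\partial B_r(z)) = 0$ for every $z \in B \setminus \{0\}$ and every $r < |z|$, combined with standard properties of weak-$\ast$ convergence of Radon measures.

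To prove this key vanishing, I would decompose $\mu = |F_{A_\infty}|^2\, \dVol + 8\pi^2 \sum_k m_k^{an}[\Sigma_k]$ as in the discussion above. The absolutely continuous part assigns zero mass to any real hypersurface, in particular to $\partial B_r(z)$. For the singular part, it suffices to show that $\mathcal{H}^{2n-4}(\Sigma_k \cap \partial B_r(z)) = 0$ for each irreducible component $\Sigma_k$, where $\mathcal{H}^{2n-4}$ is $(2n{-}4)$-dimensional Hausdorff measure. The key input is that $\Sigma_k$ is complex-analytic of pure complex codimension two and is $\C^*$-invariant. Consider the real-analytic function $f(w) := |w - z|^2 - r^2$ restricted to $\Sigma_k$. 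If $f$ were identically zero, then $\Sigma_k \subset \partial B_r(z)$; picking any $w_0 \in \Sigma_k$ and invoking $\C^*$-invariance, the orbit $\{\lambda w_0 : \lambda \in \C^*\}$ would lie in $\partial B_r(z)$. Expanding
$$
|\lambda w_0 - z|^2 = |\lambda|^2 |w_0|^2 - 2\,\mathrm{Re}(\bar\lambda \langle w_0, z\rangle) + |z|^2,
$$
and requiring this to equal $r^2$ for all $\lambda \in \C^*$, comparison of the growth as $|\lambda|\to\infty$ forces $|w_0| = 0$, contradicting $\Sigma_k \subset \C^n_*$. Thus $f \not\equiv 0$ on the irreducible $\Sigma_k$, so its zero set is a proper real-analytic subvariety of $\Sigma_k$ of real dimension at most $2n - 5$, hence of vanishing $(2n{-}4)$-Hausdorff measure.

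Granted $\mu(\partial B_r(z)) = 0$, part (a) is immediate from the standard criterion that weak-$\ast$ convergence of Radon measures gives convergence on $\mu$-continuity sets. For (b), continuity of $\mu$ from below along $B_s(z) \nearrow B_r(z)$ yields $\lim_{s \to r^-} \mu(B_s(z)) = \mu(B_r(z))$, while continuity from above along $B_s(z) \searrow \bar B_r(z)$ combined with $\mu(\partial B_r(z)) = 0$ yields $\lim_{s \to r^+} \mu(B_s(z)) = \mu(\bar B_r(z)) = \mu(B_r(z))$. For (c), given small $\delta > 0$ with $r + \delta < |z|$ and $i$ large enough that $B_{r - \delta}(z) \subset B_{r_i}(z_i) \subset B_{r + \delta}(z)$, monotonicity yields $\mu_i(B_{r-\delta}(z)) \leq \mu_i(B_{r_i}(z_i)) \leq \mu_i(B_{r+\delta}(z))$, and applying (a) followed by (b) with $\delta \to 0$ closes the squeeze.

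The only substantive step is the vanishing $\mu(\partial B_r(z)) = 0$, and the main subtlety is that it must hold for \emph{every} $r < |z|$, not merely for generic ones. This is precisely where the $\C^*$-invariance of each bubbling component $\Sigma_k$ enters essentially, ruling out the pathological possibility that some irreducible component could lie entirely in a single sphere centered at a point different from the origin.
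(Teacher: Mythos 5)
Your proof is correct and follows essentially the same strategy as the paper: everything reduces to the key vanishing $\mu(\partial B_r(z))=0$ for all $z\in B\setminus\{0\}$ and $r<|z|$, which both you and the paper obtain from the fact that $\Sigma$ is a $\C^*$-invariant complex-analytic cone of complex codimension at least two that cannot contain any full $\C^*$-orbit inside a compact sphere centered off the origin. You spell out this point more fully than the paper, which simply asserts that $\Sigma\cap\partial B_r(z)$ has Hausdorff codimension at least $5$; your explicit argument via the unboundedness of the orbit $\{\lambda w_0\}$ is exactly the right justification. The one place your route genuinely differs is part (b): the paper uses the radial invariance of $\Sigma$ to write down a quantitative Lipschitz-type bound
$$|\mu(B_s(z))-\mu(B_r(z))|\le \Big|\int_{B_s(z)}|F_{A_\infty}|^2-\int_{B_r(z)}|F_{A_\infty}|^2\Big|+C|s-r|,$$
whereas you invoke the abstract continuity of a Radon measure from below along $B_s(z)\nearrow B_r(z)$ and from above along $B_s(z)\searrow\overline{B_r(z)}$, closed off by the boundary vanishing. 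Both work; yours is more elementary and avoids re-using the cone structure, while the paper's gives the sharper quantitative control.
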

\begin{proof}
For (a), by general theory on convergence of Radon measures it suffices to show that $\mu(\p B_r(z))=0$. Since $\Sigma=\pi^{-1}(\underline \Sigma)$ where $\underline \Sigma$ is a complex subvariety of real codimension $4$ in $\C\P^{n-1}$, $\Sigma\cap \p B_{r}(z)$ is of Hausdorff codimension at least $5$ in $\C^n$, hence we have $\mu(\p B_{r}(z))=0$. Now for $(b)$ we notice that $\Sigma$ being radially invariant implies that
$$|\mu(B_s(z))-\mu(B_{r}(z))|\leq  \Big| \int_{B_s(z)}|F_{A_\infty}|^2-\int_{B_r(z)}|F_{A_\infty}|^2\Big |+ C |s-r|$$
for some fixed constant $C$. So (b) follows. For $(c)$, fix $r<r'<|z|$ and for $i $ large one has $B_{r_i}(z_i) \subset B_{r'}(z)$. This implies 
$$\mu(B_{r'}(z))=\mu(\overline{B_{r'}(z)}) \geq \limsup_{i\rightarrow\infty}\mu_i(B_{r_i}(z_i)).
$$
By letting $r'\rightarrow r$, we have 
$$\mu(B_{r}(z)) \geq \limsup_{i\rightarrow\infty}\mu_i(B_{r_i}(z_i)).$$
Similarly one can prove  $\mu(B_{r}(z)) \leq \liminf_{i\rightarrow\infty} \mu_i(B_{r_i}(z_i))$. This finishes the proof.
\end{proof}

In our definition of analytic tangent cones we always need to pass to subsequences. For our later purpose we want to restrict to a particular discrete subsequence as $\lambda\rightarrow 0$. Namely, we define $\lambda_i:=2^{-i}$ and $A_i = \lambda_i^* A$. We say two analytic tangent cones are equivalent if they have the same bubbling set and the same analytic multiplicity of each irreducible Hausdorff codimension $4$ component and the corresponding connections are gauge equivalent. 
\begin{cor}
Any analytic tangent cone $(A_\infty, \Sigma, \mu)$ is equivalent to an analytic tangent cone arising from the limit of a subsequence of $\{A_i\}_i$. 
\end{cor}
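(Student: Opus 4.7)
My plan is as follows. Suppose the given analytic tangent cone $(A_\infty, \Sigma, \mu)$ arises along a subsequence $\lambda_j \to 0$, i.e., $\lim_{j\to\infty} A_{\lambda_j} = (A_\infty, \Sigma, \mu)$ in the sense defined in Section \ref{Tangent Cone}. For each $j$, write $\lambda_j = 2^{-n_j} t_j$ with $n_j \in \N$ and $t_j \in [1, 2)$. After passing to a subsequence we may assume $n_j$ is strictly increasing and $t_j \to t_\infty \in [1, 2]$. Then $\{A_{n_j}\}_j$ is a subsequence of the fixed sequence $\{A_i\}_i$, and by Uhlenbeck compactness, after passing to a further subsequence we may assume $\lim_{j\to\infty} A_{n_j} = (A_\infty', \Sigma', \mu')$ for some analytic tangent cone; this will be the desired cone arising from a subsequence of $\{A_i\}$.

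The key identity is $A_{\lambda_j} = \phi_{t_j}^* A_{n_j}$, where $\phi_t \colon z \mapsto tz$ denotes the rescaling map; indeed, $\phi_{\lambda_j} = \phi_{2^{-n_j}} \circ \phi_{t_j}$. Since $t_j \to t_\infty > 0$, pulling back the smooth-up-to-gauge convergence $A_{n_j} \to A_\infty'$ on compact subsets of $\C^n_* \setminus \Sigma'$ yields $\phi_{t_j}^* A_{n_j} \to \phi_{t_\infty}^* A_\infty'$ on compact subsets of $\C^n_* \setminus \phi_{t_\infty}^{-1}(\Sigma')$, modulo the gauges obtained by pulling back the original gauges via $\phi_{t_j}$. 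Comparing with the original convergence $A_{\lambda_j} \to A_\infty$ on $\C^n_* \setminus \Sigma$, we deduce that $A_\infty$ is gauge equivalent to $\phi_{t_\infty}^* A_\infty'$ and $\Sigma = \phi_{t_\infty}^{-1}(\Sigma')$. By the $\C^*$-invariance of $A_\infty'$ (as a HYM cone connection) and of $\Sigma'$ (Lemma \ref{lem3.3}), we conclude that $A_\infty$ is gauge equivalent to $A_\infty'$ and $\Sigma = \Sigma'$.

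It remains to identify analytic multiplicities on each irreducible codimension-two component $\Sigma_k = \Sigma_k'$ of $\Sigma_b = \Sigma_b'$. Fix a smooth point $z \in \Sigma_k$ and a transverse slice $\Delta$ at $z$. By naturality of the Chern--Weil 4-form under pullback, $\int_\Delta \tr(F_{A_{\lambda_j}} \wedge F_{A_{\lambda_j}}) = \int_{\phi_{t_j}(\Delta)} \tr(F_{A_{n_j}} \wedge F_{A_{n_j}})$, and the same identity holds for $A_\infty$ versus $A_\infty'$ using the gauge equivalence $A_\infty \sim \phi_{t_\infty}^* A_\infty'$ established above. Since $\phi_{t_j}(\Delta) \to \phi_{t_\infty}(\Delta)$ smoothly, and $\phi_{t_\infty}(\Delta)$ is a transverse slice at $\phi_{t_\infty}(z) \in \Sigma_k$ (using $\C^*$-invariance of $\Sigma_k$), applying Lemma \ref{AnalyticMultiplicity} to both the left- and right-hand sides of the resulting expression gives $m_k^{an} = m_k'^{an}$.

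The main technical point is just bookkeeping: ensuring that the rescaling $\phi_{t_j}$, which has bounded derivatives of all orders uniformly in $j$, commutes with taking the Uhlenbeck limit on compact subsets away from the rescaled bubbling set, and that the $\C^*$-invariance of the limiting cone data absorbs the remaining factor of $t_\infty$. No new analytic input is needed beyond the Uhlenbeck compactness, the transverse slice formula (Lemma \ref{AnalyticMultiplicity}), and the $\C^*$-invariance recalled in the previous subsection.
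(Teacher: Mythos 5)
Your proof is correct and is essentially the argument the paper has in mind: the corollary is stated without an explicit proof precisely because it follows from the radial invariance of $A_\infty$, $\Sigma$, and $\Sigma_b$ (Tian's Lemma 5.3.1, recalled just before) together with the transverse-slice formula of Lemma \ref{AnalyticMultiplicity}, and your decomposition $\lambda_j = 2^{-n_j}t_j$ with $t_j\to t_\infty$ is exactly the bookkeeping that makes this precise. The only place worth tightening is the deduction $\Sigma = \phi_{t_\infty}^{-1}(\Sigma')$: one should note that the energy-concentration characterization (\ref{eqnBubblingSet}) of the bubbling set, together with the scaling identity $r^{4-2n}\int_{B_r(z)}|F_{A_{\lambda_j}}|^2 = (t_jr)^{4-2n}\int_{B_{t_jr}(t_jz)}|F_{A_{n_j}}|^2$ and Price monotonicity, handles the moving centers $t_jz\to t_\infty z$; this is routine but not quite automatic from smooth convergence on the complement alone.
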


For our purpose, we give more details about the convergence of a sequence of the rescaled connections and holomorphic sections when $\E=\pi^*\underline \E$. 
Suppose 
$$
\lim_i A_{j_i}=(A_\infty, \Sigma, \mu).
$$
By Theorem 2.23 in \cite{CS1}, we have
\begin{lem}\label{lem2.6}
 $A_\infty$ is a direct sum of simple HYM cones. Namely, after a gauge transform, $\E_\infty = \pi^{*}\underline \E_\infty$ and $ \underline \E_\infty$ is a direct sum of polystable reflexive sheaves on $\C\P^{n-1}$
$$\underline \E_\infty=\oplus_l \underline Q_l,$$
and
$$A_\infty=\bigoplus_{l} \pi^* \underline A_l+\mu_l\p \ln|z|^2\cdot\Id_{\pi^* \underline Q_l},$$
where $\underline A_l$ is the unique Hemitian-Yang-Mills connection on $\underline Q_l$ and $\mu_l$ denotes the slope of $\underline Q_l$. 
\end{lem}

For our purpose, we explain the meaning of  $\lim_i A_{j_i}=(A_\infty, \Sigma, \mu)$ in more details. Fix a smooth Hermitian $\underline H'$ on $\underline\E$, and let $H'=\pi^*\underline H'$.  Recall $H$ is the unknown Hermitian-Einstein metric on $\E$.   Let $H_i=(2^{-i})^*H$ and $f_i=(H'^{-1}H_i)^{\frac{1}{2}}$ be the complex gauge transform (note $f_i$ is Hermitian with respect to $H'$). Let $A_{i}$ be the Chern connection given by the hermitian metric $H$ and the holomorphic structure $f_i\cdot\bp_\E:=f_i\circ \bp_{\E}\circ f_i^{-1}$. Then there exists a unitary gauge isomorphism 
$$
P: (\E, H') \rightarrow (\E_\infty, H_\infty)
$$ 
outside $\Sigma$ and a sequence of unitary gauge transform $\{g_{j_i}\}_i$ of $(\E, H')$ defined outside $\Sigma$ so that $\{g_{j_i} \cdot A_{j_i}
\}_i$ converges to $P^{*} A_\infty$ smoothly outside $\Sigma$. 

Now given a sequence of holomorphic sections $\{\sigma_{i}\}$ of $\E$ over $B^*$, we know $f_i(\sigma_i)$ is a  holomorphic section of $(\E, f_i\cdot \bp_\E)$. 
\begin{defi}
$\{\sigma_{i}\}$ is called to converge to a holomorphic section $\sigma_\infty$ of $\E_\infty$, if $g_{j_i} f_{i}(\sigma_{j_i})$ converges locally smoothly to $P^{-1}\sigma_\infty$ away from $\Sigma$. 
\end{defi}
Since $g_{j_i}\cdot f_{i}(A_{j_i})$ converges to $P^*A_\infty$ outside $\Sigma$, by the elliptic regularity of $\bp$-operator,  we know that for any sequence of holomorphic sections $\{\sigma_i\}_i$ which are normalized suitably, by passing to subsequences, we can always obtain limit holomorphic sections of $\E_\infty$ in the above sense. However, the limit is \emph{not} a priori nontrivial. This is the reason why we develop the convexity results in the following subsections. 
\subsection{PDE estimate}
We will collect some PDE results from \cite{CS1} and refer the reader to Section $2.3$ in \cite{CS1} for detailed proof.  Let $\overline B^*=\overline B\setminus\{0\}\subset\C^n$. We always assume $n\geq 3$. For a function $g$, we denote $g^+=\max\{g,0\}$. 
In the following, the Laplacian $\Delta=\Delta_\omega$ is the analyst's Laplacian.  

\begin{lem}\label{pde}
Suppose $g\in C^{2}(B^{*})\cap C^0(\overline B^*)$ with $\int_{ B^*}|g^+|^{\frac{n}{n-1}}<\infty$ and $f$ is a non-negative function on $B^*$. If on $B^*$ we have
\begin{equation} \label{eqn2-16}
\Delta g(z) \geq -{|z|^{-2}}{f(z)}, 
\end{equation}
 then the following hold,
 \begin{enumerate}[(1).]
\item For all $z\in B^*$,  $$g(z)\leq |g|_{L^{\infty}(\p B)}+\int_{B^*} G(z,w) |w|^{-2} f(w)dw, $$
 where $G(z, w)$ is the (positive) Green's function for $-\Delta$ on $B$. The inequality is only meaningful when the right hand side is finite. 
\item For all $z\in B^*$, 
\begin{equation*}
g(z)\leq C_0(|g|_{L^\infty(\p B)}+\sup_{|w-z|\leq |z|/2}|f(w)|+(-\log |z|) \sup_{r\in (0, 1]} r^{1-2n}\int_{\p B_r}|f|), 
\end{equation*} 
where $C_0$ depends only on $n$.
In particular, if $|f|_{L^\infty(B^*)}<\infty$, then
$$\limsup_{z\rightarrow 0} \frac{g(z)}{-\log |z|}\leq C_0\limsup_{r\rightarrow0} r^{1-2n}\int_{\p B_r}|f|.$$
\end{enumerate}
\end{lem}

\begin{lem}\label{curvature}
Suppose $\E$ is a holomorphic vector bundle over $B^*$ and $H$ and $H'$ are two metrics on $\E$ over $B^*$, then we have  
\begin{enumerate}
\item $\Delta \log \Tr H^{-1} H' \geq -C(|\Lambda_{\omega} F_H|+|\Lambda_{\omega} F_{H'}|)$ for some constant $C=C(n)$;
\item If  $|F_H|+|F_{H'}|\in L^{1+\delta}(B^*)$ for some $\delta>0$, then
$$
(\log \Tr H^{-1} H')^+ +(\log \Tr (H')^{-1} H)^+ \in L^{\frac{n}{n-1}(1+\delta)}(B^*).
$$
\end{enumerate}
Here the norms of curvatures of $H$($H'$) are with respect to the natural Hermitian metric $H$ ($H'$) itself.
\end{lem}

\subsection{Cut out high curvature region}

We first introduce some terminologies. We say a subset $E$ of an open (or closed) annulus $A \subset \mathbb{C}^n$ is \emph{symmetric} if for any $z\in E$, we have
$$\C^*.z\cap A\subset E.$$  For any subset $E\subset\overline{B}_{2^{-1}}\setminus B_{2^{-2}}$, we define its \emph{symmetrization} to be the smallest symmetric subset that contains $E$, i.e., the set $\pi^{-1}(\pi(E))\cap (\overline{B}_{2^{-1}}\setminus B_{2^{-2}})$, where recall $\pi: \C^n_*\rightarrow \C\P^{n-1}$.  Also below when we discuss convergence of compact subsets of $\overline B$,  it will always be with respect to the Hausdorff distance on the space of all compact subsets of $\overline B$. 

For any $r\in (0, 10^{-3}]$ and integer $j\geq 1$, we define  
\begin{equation}\label{Ejr}
E^r_{j} =\{z\in \overline B_{2^{-1}}\setminus B_{2^{-2}}: (|z|r)^{4-2n}\int_{B_{|z|r}(z)} |F_{A_{j}}|^2 \geq \frac{\epsilon_0}{2}\}.
\end{equation}
Given a tangent cone $(A_\infty,\Sigma, \mu)$, we define a symmetric set
\begin{equation}
N^r(A_\infty,\Sigma, \mu):=\{z\in \overline{B}\setminus B_{2^{-3}}: (|z|r)^{4-2n}\mu(B_{|z|r}(z)) \geq \frac{\epsilon_0}{2}\}.
\end{equation}
From the definition of $\Sigma$ we see that for any $r>0$,
$$\Sigma\cap (\overline{B}\setminus B_{2^{-3}})\subset N^r(A_\infty,\Sigma, \mu).$$
  For notational convenience,  we will sometimes simply denote $N^r(A_\infty,\Sigma, \mu)$ by $N^r$ if the relevant tangent cone is clear from the context.  Given a subsequence $\{A_{j_i}\}_i$ converging to $(A_\infty,\Sigma, \mu)$, we denote
$$\Sigma^r_{j_i}:= 2 E^r_{j_i-1} \cup E^r_{j_i} \cup 2^{-1}E^r_{j_i+1}. $$
Now we are ready to state the main (technical) theorem of this section.
\begin{thm}\label{Cut-Off}
 There exists $r_0\in (0, 10^{-3})$ such that for any $r\in(0, r_0]$, and for any given tangent cone $(A_\infty, \Sigma, \mu)=\lim_{i\rightarrow\infty} A_{j_i}$ the following hold
\begin{enumerate}
\item[(I).] Suppose $V_1$ and $V_2$ are limits of $E^r_{j_i}$ and $E^{r}_{j_i+1}$ respectively, then 
$$m(V_1\setminus V_2)=m(V_2\setminus V_1)=0$$ 
where $m(\cdot)$ denotes the Lebesgue measure on $\C^n$;
\item[(II).] $N^\frac{r}{2}\subset \Sigma^r_{j_i} \subset N^{2r}$ for $i$ large. Moreover,
 $$d((B\setminus \overline{B}_{2^{-3}})\setminus N^{2r}, N^{\frac{r}{2}})>0,$$
 $$\liminf_{i}d((B\setminus \overline{B}_{2^{-3}})\setminus N^{2r}, \Sigma_{j_i}^r)>0,$$
  and
   $$d((B\setminus \overline{B}_{2^{-3}})\setminus N^{\frac{r}{2}}, \Sigma)>0, $$
here the distance $d$ is defined using the flat metric $\omega_0$.
\item[(III).] There exists a constant $C=C(r)>0$ so that for any $z\in \overline{(\overline {B}_{2^{-1}}\setminus B_{2^{-2}})\setminus N^{\frac{r}{2}}}$, there exists a flat holomorphic disk  $D_z \subset B_{\frac{3}{4}}\setminus B_{2^{-2}}$ such that $D_z\cap \Sigma=\emptyset$, $\p D_z \subset (B_{\frac{3}{4}}\setminus\overline{B}_{2^{-2}})\setminus N^{2r}$ and 
$$\min\{d(D_z,\Sigma), d(\partial D_z, \partial {B_{2^{-2}}})\}\geq C.$$ Here by ``flat holomorphic disk" we mean $D_z$ is a disk centered at $z$ which lies in some complex plane in $\C^n$ perpendicular to $\C \cdot z$ at $z$.
\end{enumerate}
\end{thm}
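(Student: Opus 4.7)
The proof splits into three parts, organized around two recurring tools---Price's monotonicity formula (valid for HYM connections with flat background metric, and inherited by $\mu$ via passage to the weak limit) and the weak convergence $\mu_{j_i}\to\mu$ from Corollary \ref{cor2.5}---supplemented in Part (III) by the complex-analytic structure of $\Sigma$ from Lemma \ref{lem3.3}.

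For Part (I), my plan is to identify both $V_1$ and $V_2$, modulo Lebesgue null sets, with the symmetrized level set $\{z\in\overline B_{1/2}\setminus B_{1/4}:\rho_\mu(z,|z|r)\geq\epsilon_0/2\}$, where $\rho_\mu(z,s):=s^{4-2n}\mu(B_s(z))$. The rescaling identity $A_{j_i+1}=(1/2)^*A_{j_i}$ combined with the scaling weight $2n-4$ of $\mu$ (since $A_\infty$ is a HYM cone) shows that $\mu_{j_i+1}$ also converges weakly to the \emph{same} $\mu$. The $\liminf$/$\limsup$ halves of weak convergence then sandwich each $V_\alpha$ between the open set $\{\rho_\mu>\epsilon_0/2\}$ (from $\mu(U)\leq\liminf\mu_{j_i}(U)$) and the closed set $\{\rho_\mu\geq\epsilon_0/2\}$ (from $\mu(\overline K)\geq\limsup\mu_{j_i}(\overline K)$); their symmetric difference lies in the level set $\{\rho_\mu=\epsilon_0/2\}$, which by monotonicity of $s\mapsto\rho_\mu(z,s)$ and a Fubini-type argument has zero Lebesgue measure for all sufficiently small $r$, so shrinking $r_0$ suffices.

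For Part (II), the two set-inclusions $N^{r/2}\subset\Sigma^r_{j_i}\subset N^{2r}$ are dual consequences of the factor-$2$ scale buffer provided by Price's monotonicity. For $\Sigma^r_{j_i}\subset N^{2r}$, if $z\in E^r_{j_i}$ then Price's monotonicity applied to $A_{j_i}$ lifts the density from scale $|z|r$ to $2|z|r$, and weak convergence on closed balls (with a generic $r$ avoiding sphere atoms) transfers the bound to $\mu$. For $N^{r/2}\subset\Sigma^r_{j_i}$, if $z\in N^{r/2}$ then monotonicity of $\rho_\mu$ upgrades the density from $|z|r/2$ to $|z|r$ with a \emph{strict} excess, uniform in $z\in N^{r/2}$ by compactness combined with the strict form of Price's monotonicity (arising from the non-conical character of $\mu$ viewed from a nonzero base point, which forces nontrivial mass in the annulus $B_{|z|r}(z)\setminus B_{|z|r/2}(z)$); then weak convergence on the open ball $B_{|z|r}(z)$ absorbs the slack, giving $z\in E^r_{j_i}$ for $i$ large. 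Sub-annuli outside $\overline B_{1/2}\setminus B_{1/4}$ are handled symmetrically via the $2E^r_{j_i-1}$ and $\tfrac12 E^r_{j_i+1}$ pieces, and the positive-distance bounds then follow from these inclusions together with compactness of the closed annulus.

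For Part (III), Lemma \ref{lem3.3} gives $\Sigma=\pi^{-1}(\underline\Sigma)$ with $\underline\Sigma\subset\C\P^{n-1}$ of complex codimension $\geq 2$. For small enough $r_0$, $N^{2r}\cap(\overline B\setminus B_{2^{-3}})$ lies in a thin tubular neighborhood of $\Sigma$ (away from $\Sigma$ the smooth part of $\mu$ contributes a density $\sim r^4<\epsilon_0/2$). For each $z$ in the compact set $\overline{(\overline B_{1/2}\setminus B_{1/4})\setminus N^{r/2}}$, pick a direction $v_z\in\C^n$ such that the affine line $\{z+tv_z:t\in\C\}$ projects in $\C\P^{n-1}$ to a line missing $\underline\Sigma$ (a dense open condition by dimension count: a generic $\C$-line in $\C\P^{n-1}$ misses a codim-$\geq 2$ subvariety), and such that the endpoints $\pi(\partial D_z)$ exit the tubular neighborhood containing $\pi(N^{2r})$; when $\pi(z)\notin\pi(N^{2r})$ a purely radial direction suffices, otherwise a small transverse component pushes $\pi(\partial D_z)$ outward. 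Choosing $s$ uniformly small places $D_z:=\{z+tv_z:|t|\leq s\}\subset B_{3/4}\setminus\overline B_{3/16}$ with $D_z\cap\Sigma=\emptyset$ and $\partial D_z\subset(B_{3/4}\setminus\overline B_{3/16})\setminus N^{2r}$, and compactness of the $z$-parameter together with continuity of the direction selection yields the uniform bound $d(D_z,\Sigma)\geq C(r)>0$. The principal obstacle lies in Part (II)'s inner inclusion: the factor-$2$ buffer must simultaneously absorb the strict/non-strict density gap and the $\liminf$/$\limsup$ asymmetry of weak convergence, which hinges on the strict form of Price's monotonicity on annuli carrying nontrivial $\mu$-mass and a uniformization over $z\in N^{r/2}$.
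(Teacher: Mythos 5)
Your plan correctly identifies the two recurring tools (Price monotonicity and weak convergence $\mu_{j_i}\to\mu$) and the complex-analytic structure of $\Sigma$, and Part (III) is broadly in line with the paper's argument (which formalizes the ``thin tubular neighborhood'' intuition via a compactness argument showing $N^{2r}$ admits a \emph{good cover} for small $r$, cf.\ Lemma~\ref{lem2.10} and Proposition~\ref{prop2.13}). But Parts (I) and (II) each contain a genuine gap.

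In Part (I), you identify $V_1\triangle V_2$ with a subset of the level set $\{z: f(z,r)=\epsilon_0/2\}$ as in the paper, but your proposed route to zero Lebesgue measure --- ``monotonicity of $s\mapsto\rho_\mu(z,s)$ and a Fubini-type argument'' --- only yields measure zero for \emph{almost every} $r$, not for \emph{all} $r\le r_0$ as the theorem requires. (Fubini on the set $\{(z,r): f(z,r)=\epsilon_0/2\}$ controls the $z$-slices for a.e.\ $r$, nothing more.) The paper's argument is qualitatively different: by Lemma~\ref{lem2.8}, on the level set the ball $\overline{B_{|z|r}(z)}$ avoids $\Sigma$ and one can pass to the smooth curvature density of $A_\infty$; the Hermitian--Einstein metric is \emph{real analytic} by elliptic regularity for analytic operators (Lemma~\ref{lem2.9}), so the level set is a real analytic subvariety; finally Proposition~\ref{prop2.16} rules out, by a contradiction/compactness argument over all tangent cones, that this subvariety is the whole annulus for any $r$ below a fixed $r_0'$. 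Without real analyticity you cannot get the ``for all $r\le r_0$'' conclusion.

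In Part (II), the key assertion --- that monotonicity upgrades the density from scale $|z|r/2$ to $|z|r$ with a \emph{strict} excess for $z\in N^{r/2}$ because $\mu$ is ``non-conical viewed from $z\neq 0$'' --- is false in general: the density ratio can be constant over the scale range $[|z|r/2,|z|r]$ (e.g.\ if the density at both scales is $\ge\epsilon_0$, or if $\mu$ vanishes identically on a neighborhood of $z$). The paper replaces this flawed strict-monotonicity heuristic by the dichotomy of Lemma~\ref{lem2.15}: if $f(z,\cdot)$ is equal at two scales below $10^{-3}$, then that common value is either $\ge\epsilon_0$ or $\equiv 0$ for all smaller scales. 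The contradiction in the paper's proof of (II) then comes from forcing both $f(z,|z|r/2)\geq\epsilon_0/2$ and $f(z,|z|r)\leq\epsilon_0/2$, hence equality $=\epsilon_0/2$ by monotonicity, which the dichotomy excludes. Your self-identified ``principal obstacle'' is real, and the resolution requires a structural lemma like Lemma~\ref{lem2.15} (ultimately resting on Remark~\ref{rmk2.14}'s real analyticity in $s$), not a strengthened form of Price monotonicity.
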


\begin{rmk}
This result is developed for the proof of convexity result in Section \ref{convexity}.
When we study rescaled limits of holomorphic sections, we will normalize the sections over $(B_{2^{-1}} \setminus B_{2^{-2}})\setminus E^r_{i}$ by its $L^2$ norm and study the convergence over this region.  (II) will guarantee  the interior smooth convergence of the sequence of normalized holomorphic sections since the region we consider will stay away from the bubbling set when taking limits. (III) will allow us to use the maximum principle and thus get uniform $L^\infty$ bound over $(B_{2^{-1}} \setminus B_{2^{-2}})\setminus E^r_{j_i}$. Together they provide a crucial convergence result of $L^2$ norm when taking limits, see the proof of Proposition \ref{prop2.18}. Roughly speaking, we need to prevent the concentration of the holomorphic section near the boundary when we take limits.  
\end{rmk}

\subsubsection{Proof of Theorem \ref{Cut-Off} (I).}
Given a tangent cone $(A_\infty,\Sigma, \mu)$, we consider the following function 
$$
f: B_{2^{-1}}\setminus \overline{B_{2^{-2}}}\times (0, 10^{-3}) \rightarrow \mathbb{R}_{+}, (z,r) \mapsto (|z|r)^{4-2n}\mu(B_{|z|r}(z)).
$$

\begin{lem}\label{lem2.8}

$f(z,r)=\frac{\epsilon_0}{2}$ if and only if $\overline{B_{|z|r}(z)}\cap \Sigma=\emptyset$ and 
$$(|z|r)^{4-2n}\int_{B_{|z|r}(z)} |F_{A_\infty}|^2= \frac{\epsilon_0}{2}.$$
\end{lem}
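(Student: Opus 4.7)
I would prove the two implications separately, exploiting the decomposition $\mu=|F_{A_\infty}|^2\dvol+8\pi^2\nu$ together with the fact (recorded in Section \ref{Tangent Cone}) that $\mathrm{supp}(\nu)\setminus\{0\}=\Sigma_b\subset\Sigma$. For the $(\Leftarrow)$ direction: since the closed ball $\overline{B_{|z|r}(z)}$ misses $\Sigma$ by hypothesis, and misses $\{0\}$ by the size constraints on $z$ and $r$ ($|z|\in[1/4,1/2]$, $r\leq 10^{-3}$), we obtain $\nu(B_{|z|r}(z))=0$. Hence $\mu(B_{|z|r}(z))=\int_{B_{|z|r}(z)}|F_{A_\infty}|^2$, and the second hypothesized equality immediately rearranges to $f(z,r)=\epsilon_0/2$.

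For the $(\Rightarrow)$ direction, the key preliminary step is to upgrade the defining property of $\Sigma$ to a pointwise density bound on $\mu$. Concretely, I would show
$$
\Theta(\mu,p):=\lim_{s\to 0^+}s^{4-2n}\mu(B_s(p))\ \geq\ \epsilon_0\qquad\text{for every }p\in\Sigma,
$$
by combining (i) the definition (\ref{eqnBubblingSet}); (ii) Corollary \ref{cor2.5}, which identifies $\lim_{i\to\infty}\mu_i(B_s(p))$ with $\mu(B_s(p))$ for a.e.\ small $s$; and (iii) Price's monotonicity on the flat background, which makes $s\mapsto s^{4-2n}\mu_i(B_s(p))$ nondecreasing for each $i$ and therefore also for the weak limit $\mu$. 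The upshot is that the density bound $s^{4-2n}\mu(B_s(p))\geq \epsilon_0$ holds for all admissible $s>0$, not merely as $s\to 0$.

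Now assume $f(z,r)=\epsilon_0/2$ and suppose for contradiction there exists $p\in\overline{B_{|z|r}(z)}\cap\Sigma$. When $p$ is interior to the ball, monotonicity centered at $p$ applied to $B_s(p)\subset B_{|z|r}(z)$ with $s=|z|r-|p-z|$ gives
$$
f(z,r)\ \geq\ \epsilon_0\left(1-\tfrac{|p-z|}{|z|r}\right)^{2n-4},
$$
which exceeds $\epsilon_0/2$ once $|p-z|$ is sufficiently smaller than $|z|r$. To reduce the near-boundary case to the interior case, I would exploit the $\C^*$-invariance of $\Sigma$ from Lemma \ref{lem3.3}: the complex line $\C\cdot p$ lies entirely in $\Sigma$, and a direct minimisation of $|tp-z|^2$ in $t\in\C$ shows that under $|p-z|\leq|z|r$ and $|z|\gtrsim 1$ this line meets the open ball in a disc of diameter comparable to $|z|r$ unless $p$ lies in the special configuration where $\C\cdot p$ is tangent to $\partial B_{|z|r}(z)$ at $p$. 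Replacing $p$ by an interior point $p'=tp\in\Sigma$ on this disc reduces to the interior case and produces $f(z,r)>\epsilon_0/2$, a contradiction. Once (a) $\overline{B_{|z|r}(z)}\cap\Sigma=\emptyset$ is established, (b) follows by the same $\nu=0$ computation as in the converse direction.

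The main obstacle is the residual tangential configuration where the entire complex line $\C\cdot p$ stays outside the open ball, so that the $\C^*$-orbit of $p$ does not itself supply an interior $\Sigma$-point. I would handle this either by a short perturbation in the radius parameter $r$ (using Corollary \ref{cor2.5}(b) for continuity of $r\mapsto\mu(B_{|z|r}(z))$ to break the tangency and return to the generic case) or by invoking Lemma \ref{lem3.3} to exploit the higher-dimensional structure of $\Sigma=\pi^{-1}(\underline\Sigma)$, $\underline\Sigma$ being of complex codimension $\geq 2$ in $\C\P^{n-1}$: the tangent cone to $\Sigma$ at $p$ has complex dimension $\geq n-2\geq 1$ and in general contains transversal directions to $\partial B_{|z|r}(z)$ that reach the interior. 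I expect this tangential edge case to be the most delicate technical point of the argument.
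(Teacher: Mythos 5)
You handle the $(\Leftarrow)$ direction correctly and in essentially the same way as the paper (which simply cites the definition; your reduction to $\nu(B_{|z|r}(z))=0$ via $\mathrm{supp}(\nu)\setminus\{0\}\subset\Sigma$ is a valid spelling-out of what ``the definition'' means).

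For $(\Rightarrow)$ the routes diverge, and yours has a genuine gap that is considerably wider than the tangential edge case you flag. The paper's argument is direct, not by contradiction: from $f(z,r)=\epsilon_0/2$ and the continuity in Corollary \ref{cor2.5}(b) one gets some $r'>r$ with $f(z,r')\le 3\epsilon_0/4$; by Corollary \ref{cor2.5}(a) the rescaled energies $(|z|r')^{4-2n}\int_{B_{|z|r'}(z)}|F_{A_{j_i}}|^2$ are then eventually below $\epsilon_0$, and the $\epsilon$-regularity theorem forces smooth convergence of $\{A_{j_i}\}$ on a ball containing $\overline{B_{|z|r}(z)}$. This simultaneously gives $\overline{B_{|z|r}(z)}\cap\Sigma=\emptyset$ and identifies $\mu$ with $|F_{A_\infty}|^2\dVol$ there. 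No case analysis on the geometry of $\Sigma$ is required.

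Your contradiction argument only yields $f(z,r)\ge\epsilon_0(1-|p-z|/(|z|r))^{2n-4}$, which exceeds $\epsilon_0/2$ only when $|p-z|/(|z|r) < 1-2^{-1/(2n-4)}$, a threshold that shrinks like $\log 2/(2n-4)$ for large $n$. The $\C^*$-move (replacing $p$ by the orthogonal projection $q$ of $z$ onto $\C p$) gives $|q-z|\le|p-z|$ but $|q-z|$ can still be essentially all of $|z|r$ without any tangency: with $z=(1,0,\dots,0)$ and $p=(1,re^{i\theta},0,\dots,0)\in\partial B_r(z)$ the line $\C p$ is not tangent, yet $|q-z|^2 = r^2/(1+r^2)$, so $|q-z|/r\to 1$ as $r\to 0$ and the monotonicity bound is vacuous. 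Thus the contradiction fails for a large open family of non-tangent configurations, not only the tangential one. Neither of your fixes resolves this: enlarging $r$ to $r''>r$ makes the $\Sigma$-point interior but leaves the nearest $\C^*\!\cdot p$ point at distance near $|z|r$ from $z$, so the ratio to $|z|r''$ stays close to $1$, and moreover you no longer control the value of $f(z,r'')$ from above (you only assumed $f(z,r)=\epsilon_0/2$); and the complex dimension of $\Sigma$ can be as small as $1$ (it is at most $n-2$, not at least $n-2$ as you wrote --- e.g.\ $\Sigma$ is a single punctured complex line when $\underline\Sigma$ is a point), so there need be no transversal directions to exploit. A contradiction argument anchored at a single putative bubbling point and using only the pointwise density bound cannot close this lemma; one has to argue, as the paper does, that small energy on a slightly enlarged ball forces smooth convergence on the whole closed ball.
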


\begin{proof}
The \textit{if} part follows directly from the definition. For the \textit{only if} part, suppose $\lim_i A_{j_i}=(A_\infty, \Sigma, \mu)$. If $f(z,r)=\frac{\epsilon_0}{2}$, by Corollary \ref{cor2.5}, there exists $r'>r$ such that  
$$f(z,r')\leq \frac{3\epsilon_0}{4},$$
hence  for $i$ large
$$(|z|r')^{4-2n}\int_{B_{|z|r'}(z)}|F_{A_{j_i}}|^2 \leq \frac{5}{6}\epsilon_0.$$
By the choice of $\epsilon_0$, $\{A_{j_i}\}_{i}$ converge to $A_\infty$ smoothly over $B_{|z|\frac{r'+r}{2}}(z)$ and $B_{|z|\frac{r+r'}{2}}(z)\cap \Sigma=\emptyset$. As a result,  
$$(|z|r)^{4-2n}\mu(B_{|z|r}(z))=(|z|r)^{4-2n}\int_{B_{|z|r}(z)} |F_{A_\infty}|^2= \frac{\epsilon_0}{2},$$
and  $\overline{B_{|z|r}(z)}\cap \Sigma=\emptyset$. This finishes the proof. 
\end{proof}

\begin{rmk}\label{rmk2.13}
The conclusion also holds if we replace $\frac{\epsilon_0}{2}$ by any $c<\epsilon_0$.
\end{rmk}

\begin{lem}\label{lem2.9}
For any fixed $r\in (0, 10^{-3}]$,  the set 
$$\{z\in B_{2^{-1}}\setminus \overline{B_{2^{-2}}}: f(z, r)=\frac{\epsilon_0}{2}\}$$
 is a symmetric real analytic subvariety of $B_{2^{-1}}\setminus\overline{ B_{2^{-2}}}$, and it is a proper subset if $\Sigma$ is non-empty.  In particular, if $\Sigma\neq \emptyset$, then
$$m(\{z\in B_{2^{-1}}\setminus \overline{B_{2^{-2}}}: f(z, r)=\frac{\epsilon_0}{2}\})=0.$$
\end{lem}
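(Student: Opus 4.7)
The plan is to establish four properties of the level set $V := \{z \in B_{2^{-1}}\setminus\overline{B_{2^{-2}}} : f(z,r) = \epsilon_0/2\}$ in order: (i) $V$ is $\C^*$-invariant, (ii) $V$ is locally cut out by a single real-analytic equation near each of its points, (iii) $V$ is closed in the annulus, and (iv) $V$ is a proper subset of the annulus whenever $\Sigma\neq\emptyset$. The measure-zero conclusion then follows from the standard fact that a proper real-analytic subvariety of a connected real-analytic manifold has Lebesgue measure zero.

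Symmetry comes essentially for free: the radial invariance of $A_\infty$ and of each irreducible component $\Sigma_k$ of $\Sigma_b$ (both recalled before Lemma \ref{lem3.3}) force $|F_{A_\infty}|^2\dvol$ and each current $[\Sigma_k]$ to scale as $|\lambda|^{2n-4}$ under the dilation $z\mapsto\lambda z$, and this scaling is exactly cancelled by the prefactor $(|z|r)^{4-2n}$ in the definition of $f$, giving $f(\lambda z, r) = f(z,r)$ for every $\lambda\in\C^*$.

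For the real-analytic structure the crucial input is Lemma \ref{lem2.8}: every $z_0\in V$ satisfies $\overline{B_{|z_0|r}(z_0)}\cap\Sigma=\emptyset$, so on a neighborhood $U_0$ of $z_0$ in the annulus the singular part of $\mu$ does not contribute and
$$
f(z,r) \;=\; (|z|r)^{4-2n}\int_{B_{|z|r}(z)} |F_{A_\infty}|^2 \,\dvol.
$$
Since an admissible HYM connection is real-analytic in a suitable gauge on its smooth locus (the flat K\"ahler metric is real-analytic and the HYM system is elliptic), $f(\cdot,r)$ is real-analytic on $U_0$, so $V\cap U_0$ is precisely the zero locus of $f(\cdot,r)-\epsilon_0/2$. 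For closedness, given $z_k\to z_0$ with $z_k\in V$ each $\overline{B_{|z_k|r}(z_k)}$ avoids $\Sigma$, so the worst possibility is that $\overline{B_{|z_0|r}(z_0)}$ touches $\Sigma$ only along its boundary; in that case $\nu(B_{|z_0|r}(z_0))=0$, and the local integrability of $|F_{A_\infty}|^2$ provided by \cite{BS} combined with dominated convergence yields $f(z_0,r)=\lim_k f(z_k,r)=\epsilon_0/2$ (a posteriori this tangency scenario is even ruled out by Lemma \ref{lem2.8} itself).

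Finally, properness follows from Lemma \ref{lem3.3}: when $\Sigma\neq\emptyset$ its $\C^*$-invariance produces a point $z_* \in \Sigma\cap(B_{2^{-1}}\setminus\overline{B_{2^{-2}}})$, and the definition \eqref{eqnBubblingSet} together with Price's monotonicity and the weak convergence of the curvature densities to $\mu$ force $f(z_*,r)\geq\epsilon_0>\epsilon_0/2$, so $z_*\notin V$. The hard part will be the closedness argument: although $f$ is real-analytic wherever the corresponding closed ball avoids $\Sigma$, one must carefully handle limit balls that may tangentially meet $\Sigma$, and this is precisely where the $L^1_{loc}$-integrability of $|F_{A_\infty}|^2$ across $\Sigma$ from \cite{BS} becomes indispensable for passing the continuity argument through.
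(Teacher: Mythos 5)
Your proposal follows essentially the same route as the paper: reduce via Lemma \ref{lem2.8} to the locus where the closed ball avoids $\Sigma$, invoke elliptic analyticity of the Hermitian--Einstein metric to make the renormalized energy density $Q(z)=(|z|r)^{4-2n}\int_{B_{|z|r}(z)}|F_{A_\infty}|^2$ real-analytic there, and conclude by the standard measure-zero property of proper real-analytic zero sets. Your extra care with symmetry and closedness is sound but already implicit in the paper (symmetry from the $\C^*$-invariance of $A_\infty$ and $\Sigma$, closedness from the continuity of $f$ established in Corollary \ref{cor2.5}(b)), so the tangency/dominated-convergence detour, while correct, is not needed.
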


\begin{proof}
Locally near any smooth point, under a holomorphic frame, the Hermitian-Einstein metric $h_\infty$  on $\E_\infty$ satisfies the following elliptic equation 
$$
P(h_\infty):=\sqrt{-1}\Lambda_{\omega_0} \bp (h^{-1}_{\infty}\p h_{\infty})=0.
$$ 
Since the coefficients of $P$ are real analytic in $z$, it follows from Theorem 41 on page 467 in \cite{BA}  that $h_\infty$ is also real analytic in $z$. Therefore, the function 
$$Q: \{z\in B\setminus \{0\}: B_{|z|r}(z)\cap\Sigma=\emptyset\}\rightarrow\mathbb R; z\mapsto (|z|r)^{4-2n}\int_{B_{|z|r}(z)}|F_{A_\infty}|^2$$
 is real analytic. 
  Now by Lemma \ref{lem2.8},  we know 
$$
\begin{aligned}
&\{z\in B_{2^{-1}}\setminus \overline{B}_{2^{-2}}: f(z,r)=\frac{\epsilon_0}{2}\}\\
=&\{z\in B_{2^{-1}}\setminus \overline{B}_{2^{-2}}:(|z|r)^{4-2n}\int_{B_{|z|r}(z)}|F_{A_\infty}|^2 =\frac{\epsilon_0}{2}, B_{|z|r}(z)\cap \Sigma=\emptyset\}.
\end{aligned}
$$
This easily implies $\{z\in B_{2^{-1}}\setminus \overline{B_{2^{-2}}}: f(z, r)=\frac{\epsilon_0}{2}\}$ is a symmetric real analytic subvariety of $B_{2^{-1}}\setminus \overline{B}_{2^{-2}}$.  The last statement follows from well-known facts about the zero set of a real analytic function (see for example \cite{BM}). 
\end{proof}

\begin{rmk}\label{rmk2.14}
It also follows from the proof that for any fixed $z$ such that $B_{|z|r}(z)\cap \Sigma=\emptyset$, the function 
$$s\mapsto(|z|s)^{4-2n}\int_{B_{|z|s}(z)}|F_{A_\infty}|^2$$ is real analytic in $(0, r)$.  Then given any constant $C$, the set
 $$\{s\in (0, r): (|z|s)^{4-2n}\int_{B_{|z|s}(z)}|F_{A_\infty}|^2=C\}$$
  is either equal to $(0,r)$ or consists of finitely many points.  
\end{rmk}

\begin{rmk}
Notice here we are working on the tangent cone, which is always with respect to the standard flat K\"ahler metric $\omega_0$ on $\C^n$, even if the original HYM connection is defined for an arbitrary smooth background K\"ahler metric. 
\end{rmk}

\begin{prop}\label{prop2.16}
There exists $r'_0\in (0, 10^{-3})$ such that for any $r\in (0, r_0')$ and any tangent cone $(A_\infty,\Sigma, \mu)$,
$$m(\{z\in B_{2^{-1}}\setminus \overline{B_{2^{-2}}}: f(z, r)=\frac{\epsilon_0}{2}\})=0.$$
\end{prop}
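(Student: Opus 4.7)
The plan is to reduce the problem, via Lemma \ref{lem2.9}, to ruling out that the set $S_r := \{z \in B_{2^{-1}} \setminus \overline{B_{2^{-2}}} : f(z, r) = \epsilon_0/2\}$ coincides with the entire annulus for $r$ below some threshold depending only on $A$. Lemma \ref{lem2.9} already asserts $S_r$ is a symmetric real-analytic subvariety of the annulus; if it is a \emph{proper} subvariety then it has Lebesgue measure zero, and properness is automatic whenever $\Sigma \neq \emptyset$. So the genuine content lies in ruling out, uniformly over tangent cones (in particular those with $\Sigma = \emptyset$), the scenario $f(\cdot, r) \equiv \epsilon_0/2$ on the full annulus.

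The core estimate I would establish is a uniform integral bound
\[
\int_{B_{2^{-1}} \setminus \overline{B_{2^{-2}}}} f(z, r)\, dz \leq C(A)\, r^4,
\]
where $C(A)$ depends only on the original connection $A$. To prove it, write $\mu(B_{|z|r}(z)) = \int \mathbf{1}_{|y-z| < |z|r}\, d\mu(y)$ and apply Fubini:
\[
\int f(z, r)\, dz = \int_y K(y, r)\, d\mu(y), \qquad K(y, r) := \int_{\{z \in \text{annulus},\, |y-z| < |z|r\}} (|z|r)^{4-2n}\, dz.
\]
For small $r$, the condition $|y - z| < |z|r$ forces $|z|$ to lie within a factor $1 \pm r$ of $|y|$ and confines $z$ to a ball of radius $\sim |y|r$ around $y$, yielding the elementary estimate $K(y, r) \leq C_n |y|^4 r^4$. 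On the other hand, Price's monotonicity formula applied to $A$ gives $\lambda^{4-2n} \int_{B_\lambda(0)} |F_A|^2 \leq C'(A)$ for all small $\lambda$; after rescaling, $\mu_i(B_{3/4}) = \lambda_i^{4-2n} \int_{B_{3\lambda_i/4}(0)} |F_A|^2 \leq C''(A)$, and weak convergence gives $\mu(B_{3/4}) \leq C''(A)$ uniformly over every tangent cone. Combining the two yields the desired integral bound.

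With this in hand, if $f(\cdot, r) \equiv \epsilon_0/2$ on the annulus, then integrating would give
\[
\frac{\epsilon_0}{2}\, m(B_{2^{-1}} \setminus \overline{B_{2^{-2}}}) \leq C(A)\, r^4,
\]
which fails once $r < r_0' := (\epsilon_0\, m(B_{2^{-1}} \setminus \overline{B_{2^{-2}}})/(2C(A)))^{1/4}$. Hence for every $r \in (0, r_0')$ and every tangent cone, $S_r$ must be a proper real-analytic subvariety of the annulus, and therefore has Lebesgue measure zero.

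The conceptual obstacle I anticipate is the temptation to seek uniform \emph{pointwise} control of $|F_{A_\infty}|$ across all tangent cones so as to show $f(z, r) \to 0$ uniformly. Such pointwise control genuinely fails when $\Sigma \neq \emptyset$, because the curvature concentrates along the bubbling locus. The Fubini-plus-monotonicity strategy sidesteps this by extracting only the uniform \emph{integral} information that is actually needed. The single calculation requiring any care is the upper bound on $K(y, r)$, which is a direct volume and homogeneity computation.
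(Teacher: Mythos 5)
Your proof is correct, and it takes a genuinely different route from the paper's. The paper argues by compactness and contradiction: if the conclusion failed along a sequence $r_i \to 0$, then (using Lemma~\ref{lem2.9}) one could find tangent cones $(A_\infty(i),\Sigma(i),\mu(i))$ with $\Sigma(i)=\emptyset$ and $f_i(\cdot, r_i) \equiv \epsilon_0/2$ on the full annulus; extracting a diagonal limit of these tangent cones produces a tangent cone whose density $\Theta(\mu,z)$ is at least $\epsilon_0/2>0$ at every point of the annulus, forcing the entire annulus into $\Sigma$, which contradicts the fact that $\Sigma$ has finite Hausdorff codimension-$4$ measure. Your argument sidesteps the diagonal extraction entirely: by Fubini/Tonelli, $\int_{\text{annulus}} f(z,r)\,dz = \int K(y,r)\,d\mu(y)$, the kernel $K(y,r)$ is supported in a fixed annulus bounded away from the origin and satisfies $K(y,r) \le C_n r^4$ by a direct volume-and-homogeneity computation, and Price's monotonicity for the \emph{original} connection $A$ plus lower semicontinuity of Radon measures on open sets gives a bound on $\mu(B_{3/4})$ that is uniform over all tangent cones; this yields $\int f(\cdot,r) \le C(A) r^4$, which rules out $f(\cdot,r)\equiv \epsilon_0/2$ once $r$ is small. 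What your version buys is an explicit threshold $r_0'$ and a more elementary, hands-on proof that the integral bound is uniform over tangent cones, rather than leaving that uniformity implicit in the compactness of the space of tangent cones; what the paper's version buys is brevity, at the cost of compressing a nontrivial diagonal argument into the phrase ``taking limits.'' Both proofs rest on the same reduction via Lemma~\ref{lem2.9}, namely that $S_r$ is a real-analytic subvariety of the connected annulus and so is either Lebesgue-null or the whole annulus.
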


\begin{proof}
Otherwise, by Lemma \ref{lem2.9}, we can find a sequence $r_i\rightarrow 0$ and for each $r_i$ there exists a tangent cone $(A_\infty(i),\Sigma(i),\mu(i))$ with $\Sigma(i)=\emptyset$ and 
$$\{z\in B_{2^{-1}}\setminus \overline{B_{2^{-2}}}: f_i(z, r_i)=\frac{\epsilon_0}{2}\}=B_{2^{-1}}\setminus \overline{B_{2^{-2}}}.$$
 Taking limits,  we obtain $(A_\infty, \Sigma, \mu)$ with $B_{2^{-1}}\setminus \overline{B_{2^{-2}}}\subset \Sigma$, which is impossible. This is a contradiction.
\end{proof}

Now we finish the proof of (I) for all $r\in (0, r_0']$. 
\begin{proof}[Proof of (I)]
 We  claim
$$(V_1 \setminus V_2 )\cup (V_2\setminus V_1) \subset \{z\in \overline B_{2^{-1}}\setminus B_{2^{-2}}: f(z,r)=\frac{\epsilon_0}{2}\}.$$
Given this claim, by Proposition \ref{prop2.16}, we have $m(V_1\setminus V_2)=m(V_2\setminus V_1)=0$. We only prove the claim for $V_1\setminus V_2$ and the proof for $V_2\setminus V_1$ is the same. Given any $z\in V_1 \setminus V_2 $, we need to show $f(z,r)=\frac{\epsilon_0}{2}$. By passing to a subsqequence, there exists a sequence of points $z_i\in E^r_{j_i}\setminus E^r_{j_i+1}$ converging to $z$. By definition, for each $z_i$, there exists $y_i\in \overline{B}_{2^{-1}}\setminus B_{2^{-2}}$ with $\pi(z_i)=\pi(y_i)$ satisfying 
$$
(|y_i|r)^{4-2n}\int_{B_{|y_i|r}(y_i)} |F_{A_{j_i}}|^2 \geq \frac{\epsilon_0}{2} 
$$
but 
$$
(|\frac{y_i}{2}|r)^{4-2n}\int_{B_{|\frac{y_i}{2}|r}(\frac{y_i}{2})} |F_{A_{j_i}}|^2 <\frac{\epsilon_0}{2}.
$$
By passing to a subsequence, we can assume $\{y_i\}_i$ converge to $y\in \overline B_{2^{-1}}\setminus B_{2^{-2}}$ with $\pi(y)=\pi(z)$. By Corollary \ref{cor2.5}, we have 
$$
(|y|r)^{4-2n}\mu(B_{|y|r}(y))\geq \frac{\epsilon_0}{2}
$$
and
$$
(|\frac{y}{2}|r)^{4-2n}\mu(B_{|\frac{y}{2}|r}(\frac{y}{2}))\leq \frac{\epsilon_0}{2}.
$$
By Corollary \ref{cor2.5},  we have
$$(|z|r)^{4-2n}\mu(B_{|z|r}(z))=(|y|r)^{4-2n}\mu(B_{|y|r}(y))=(|\frac{y}{2}|r)^{4-2n}\mu(B_{|\frac{y}{2}|r}(\frac{y}{2}))=\frac{\epsilon_0}{2}.$$
This finishes the proof.
\end{proof}

\subsubsection{Proof of Theorem \ref{Cut-Off} (II).}
Again suppose we are given a tangent cone $(A_{\infty}, \Sigma, \mu)$. 
\begin{lem}\label{lem2.15}
Suppose for some $0<r_1<r_2<10^{-3}$ and $z\in B\setminus \{0\}$ we have
$$(|z|r_1)^{4-2n} \mu(B_{|z|r_1}(z))=(|z|r_2)^{4-2n} \mu(B_{|z|r_2}(z)), $$
then exactly one of the following holds:
\begin{itemize}
\item $(|z|r_2)^{4-2n} \mu(B_{|z|r_2}(z))\geq \epsilon_0$;
\item $(|z|r)^{4-2n} \mu(B_{|z|r}(z))\equiv 0$ for any $r\leq r_2$. 
\end{itemize}
\end{lem}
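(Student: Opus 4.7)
The plan is to split the argument into two cases according to whether the common density value exceeds the $\epsilon$-regularity threshold $\epsilon_0$, and to use Price's monotonicity combined with real analyticity to rule out intermediate behavior in the subcritical case.

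First I would record the monotonicity of the limit density. For each pre-limit connection $A_{j_i}$ with $\lim_i A_{j_i}=(A_\infty,\Sigma,\mu)$, Price's monotonicity formula on the flat ball yields that $s\mapsto s^{4-2n}\int_{B_s(z)}|F_{A_{j_i}}|^2$ is non-decreasing in $s\in (0,|z|)$. Combining this with Corollary \ref{cor2.5}(a)--(b), which gives $\mu_i(B_s(z))\to \mu(B_s(z))$ for $s<|z|$ and continuity of $s\mapsto \mu(B_s(z))$, the monotonicity passes to the limit: $r\mapsto (|z|r)^{4-2n}\mu(B_{|z|r}(z))$ is non-decreasing on $(0,1)$. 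The hypothesis then forces this function to be constant on $[r_1,r_2]$; denote the common value by $c\ge 0$.

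If $c\ge \epsilon_0$ the first alternative is immediate. Otherwise $c<\epsilon_0$, and I would invoke Remark \ref{rmk2.13} at $r=r_2$ to obtain $\overline{B_{|z|r_2}(z)}\cap\Sigma=\emptyset$ together with $(|z|r_2)^{4-2n}\int_{B_{|z|r_2}(z)}|F_{A_\infty}|^2=c$. Hence $A_{j_i}\to A_\infty$ smoothly on this ball, $\nu\equiv 0$ there, and $\mu=|F_{A_\infty}|^2\dVol$ on $B_{|z|r_2}(z)$. By Remark \ref{rmk2.14}, the function $s\mapsto (|z|s)^{4-2n}\int_{B_{|z|s}(z)}|F_{A_\infty}|^2$ is real analytic on $(0,r_2)$, and since it coincides with the $\mu$-density and is constant on the subinterval $[r_1,r_2]$, analyticity propagates the constancy to all of $(0,r_2)$. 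Letting $s\to 0^+$ and using that $z\in B\setminus\{0\}$ lies in the smooth locus of $A_\infty$ (since $z\in B_{|z|r_2}(z)$ which misses $\Sigma$), the integrand $|F_{A_\infty}|^2$ is bounded near $z$, so $(|z|s)^{4-2n}\int_{B_{|z|s}(z)}|F_{A_\infty}|^2=O(s^4)\to 0$. The constant value is therefore $0$, establishing the second alternative for all $r\le r_2$.

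The main point requiring care is the first step, namely carefully justifying that Price's monotonicity transfers to the limit measure $\mu$; this relies on the boundary-vanishing property $\mu(\partial B_s(z))=0$ provided by Corollary \ref{cor2.5}, without which weak convergence of $\mu_i\to\mu$ would not suffice to preserve the inequality on balls. Once this is in place, the two-case split together with the analyticity-plus-smoothness argument yields the dichotomy cleanly, and exclusivity of the two alternatives is automatic since in the second case the density at $r_2$ is $0<\epsilon_0$.
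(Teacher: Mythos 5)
Your proof is correct and follows essentially the same route as the paper's: use Price's monotonicity together with the equality at $r_1$ and $r_2$ to get constancy on $[r_1,r_2]$, split on whether the common value is below $\epsilon_0$, use Lemma~\ref{lem2.8}/Remark~\ref{rmk2.13} to conclude $\overline{B_{|z|r_2}(z)}\cap\Sigma=\emptyset$ so that $\mu=|F_{A_\infty}|^2\dVol$ there, propagate the constancy to $(0,r_2)$ via Remark~\ref{rmk2.14}, and send $s\to 0$. The only cosmetic difference is that you establish the monotonicity of the limiting $\mu$-density as a separate preliminary step (by passing Price's monotonicity for $A_{j_i}$ through Corollary~\ref{cor2.5}), whereas the paper applies Price's monotonicity directly to $A_\infty$ after the ball is shown to avoid $\Sigma$; both are valid and use the same ingredients.
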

\begin{proof}
 Suppose $(|z|r_2)^{4-2n}\mu(B_{|z|r_2}(z))< \epsilon_0$, then by Remark \ref{rmk2.13} we have $\overline{B_{|z|r_2}(z)}\cap \Sigma=\emptyset$. So on  $B_{|z|r_2}(z)$, $A_\infty$ is smooth and  $\mu=|F_{A_\infty}|^2dVol$. 
By Price's monotonicity formula, under the above assumption,  the following function
$$(|z|s)^{4-2n}\int_{B_{|z|s}(z)} |F_{A_\infty}|^2$$
is constant on $[r_1, r_2]$. 
So by Remark \ref{rmk2.14}, it is constant on $(0, r_2]$. The conclusion follows by letting $s$ tend to zero.

\end{proof}

\begin{proof}[Proof of (II)]
Suppose $\{A_{j_i}\}_i$ converges to $(A_\infty, \Sigma, \mu)$. We first show the inclusion $N^{\frac{r}{2}}\subset \Sigma^{r}_{j_i}$ for $i$ large. Otherwise, by passing to a subsequence, there exists a sequence of points $z_{j_i} \in N^{\frac{r}{2}}\setminus \Sigma^r_{j_i}$ and $z_{j_i}$ converges to $z\in N^{\frac{r}{2}}$. In particular, 
$$(|z|\frac{r}{2})^{4-2n}\mu(B_{|z|\frac{r}{2}}(z))\geq \frac{\epsilon_0}{2}.$$
 Since $z_{j_i}\notin \Sigma^r_{j_i}$,  by Corollary \ref{cor2.5}, we must have  
 $$(|z|r)^{4-2n}\mu(B_{|z|r}(z))\leq \frac{\epsilon_0}{2}. $$
By Price's monotonicity formula (see Equation $(5.3.4)$ in \cite{Tian}), we have
 $$(|z|\frac{r}{2})^{4-2n}\mu(B_{|z|\frac{r}{2}}(z))=(|z|r)^{4-2n}\mu(B_{|z|r}(z))=\frac{\epsilon_0}{2}.$$ 
However, by Lemma \ref{lem2.15}, this is impossible. Similarly, one can show $\Sigma^r_{j_i} \subset N^{2r}$ for $i$ large and thus
$$\liminf_i d((B\setminus \overline{B_{2^{-3}}})\setminus N^{2r}, \Sigma^r_{j_i})>0.$$ Now we show that $d((B\setminus \overline{B_{2^{-3}}})\setminus N^{\frac{r}{2}}, \Sigma)>0$. Otherwise, since $\Sigma \subset N^{\frac{r}{2}}$ by definition, there exists a point $z\in \Sigma$ so that $(\frac{|z|r}{2})^{4-2n} \mu(B_{\frac{|z|r}{2}}(z))\leq \frac{\epsilon_0}{2}$ but $z\in \Sigma$ which is impossible by Lemma \ref{lem2.8}. Since $N^{\frac{r}{2}}\subset N^{2r}$, similar reasons show that 
$$d((B\setminus \overline{B_{2^{-3}}})\setminus N^{2r},N^{\frac{r}{2}})>0.$$ This finishes the proof.
\end{proof}

\subsubsection{Proof of Theorem \ref{Cut-Off} (III).}
Given a point $p\in \mathbb{C}^n\setminus\{0\}$, we can choose an $n-2$ dimensional complex linear subspace  $\C^{n-2}_{p}\subset\C^n$ that contains $\C\cdot p$. Then using the flat metric on $\C^n$, we can identify $\C^n$ with an orthogonal product $\C^{2} \times \C^{n-2}_p$ at $p$. 

\begin{defi}
We say a closed subset $S\subset \overline{B}$ admits a \emph{good cover} if $S \cap (\overline B _{2^{-1}}\setminus B_{2^{-2}})$ can be covered by finitely many open sets $U_{k}\subset B_{\frac{3}{4}}\setminus \overline{B_{\frac{3}{16}}}$ such that
\begin{itemize}
\item $ U_k =B^{2}_{\delta^k_2}\times B_{\delta^k_3}^{n-2}\subset \mathbb{C}^2 \times \mathbb{C}^{n-2}_{p_k}$ for some point $p_k\in \C^n\setminus \{0\}$, some choice of $\C^{n-2}_{p_k}$, and some $\delta_2^k,\delta_3^k>0$, where $B^2_{\delta^k_2}$ denotes the ball $\{|z|<\delta\}$ in $\C^2$ and $B^2_{\delta^k_3}$ denote the ball of radius $\delta_3^k$ centered at $p_k$ in $\C^{n-2}_{p_k}$;
\item $\emptyset\neq \overline{U_k}\cap S \subset V_k =B^2_{\delta^k_1}\times \overline{B}^{n-2}_{\delta^k_3}$ for some $\delta^k_1\in(0, \delta^k_2)$;
\end{itemize}
\end{defi}

\begin{lem}\label{lem2.10}
For any tangent cone $(A_\infty, \Sigma, \mu)$, $\Sigma$ admits a good cover. 
\end{lem}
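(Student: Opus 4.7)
\textbf{Proof proposal for Lemma \ref{lem2.10}.}
My plan is to construct the good cover chart by chart around each point of $\Sigma$ and then extract a finite subcover using compactness of $\Sigma\cap(\overline{B}_{2^{-1}}\setminus B_{2^{-2}})$. The key structural input is Lemma \ref{lem3.3}, which guarantees that $\Sigma=\pi^{-1}(\underline{\Sigma})$ for some complex analytic subvariety $\underline{\Sigma}\subset \C\P^{n-1}$ of complex codimension at least $2$; in particular $\Sigma$ is a complex analytic subvariety of $\C^n_*$ of complex dimension at most $n-2$.

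Fix $p\in \Sigma\cap(\overline{B}_{2^{-1}}\setminus B_{2^{-2}})$. I would first choose an orthogonal splitting $\C^n=\C^2\oplus \C^{n-2}_p$ with $p\in \C^{n-2}_p$ and the $\C^2$ factor chosen generically so that the affine slice $p+\C^2$ intersects the germ of $\Sigma$ at $p$ only in the point $p$ itself. Because $\dim_{\C}\Sigma\leq n-2$, a generic complex $2$-plane through $p$ meets $\Sigma$ near $p$ in a zero-dimensional (hence discrete) set, and a further generic perturbation lets us assume this discrete set consists of $p$ alone. This is the standard local transversality fact for complex analytic varieties of codimension $\geq 2$.

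With the splitting fixed, I would build the product chart in two compactness steps. Pick $\delta_2>0$ small enough that $\{q+p:q\in\C^2,\,|q|\leq\delta_2\}$ meets $\Sigma$ only at $p$. The compact ``circle'' $\{q+p:|q|=\delta_2\}$ is then disjoint from the closed set $\Sigma$, so by continuity there exists $\delta_3>0$ with
$$\Sigma\cap \{q_1+q_2:q_1\in\C^2,\,|q_1|=\delta_2,\,q_2\in\C^{n-2}_p,\,|q_2-p|\leq\delta_3\}=\emptyset.$$
Setting $U:=B^2_{\delta_2}\times B^{n-2}_{\delta_3}$ in the product notation of the definition, $\Sigma\cap \overline{U}$ is closed and disjoint from the cylindrical boundary $\partial B^2_{\delta_2}\times \overline{B^{n-2}_{\delta_3}}$. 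A second compactness argument on $\overline{U}$ then produces $\delta_1\in(0,\delta_2)$ with $\Sigma\cap \overline{U}\subset B^2_{\delta_1}\times \overline{B^{n-2}_{\delta_3}}$. Shrinking $\delta_2,\delta_3$ from the outset ensures $U\subset B_{3/4}\setminus \overline{B_{3/16}}$, and since $p\in \Sigma\cap U$ the intersection $\overline{U}\cap\Sigma$ is automatically nonempty.

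Applying this construction at every point of the compact set $\Sigma\cap(\overline{B}_{2^{-1}}\setminus B_{2^{-2}})$ and extracting a finite subcover yields the desired good cover. I do not anticipate a serious obstacle: the argument is a routine use of the local structure of complex analytic subvarieties of codimension $\geq 2$ together with two elementary compactness manipulations. The only mildly delicate point is producing the strict inequality $\delta_1<\delta_2$, which is exactly what the second compactness step delivers once $\Sigma$ has been separated from the cylindrical boundary of $U$.
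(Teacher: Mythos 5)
Your argument is correct and takes essentially the same route as the paper: both start from Lemma \ref{lem3.3} to deduce that $\Sigma$ is a complex subvariety of codimension at least $2$, then at each $p\in\Sigma\cap(\overline{B}_{2^{-1}}\setminus B_{2^{-2}})$ choose a generic orthogonal splitting $\C^2\times\C^{n-2}_p$ in which the slice through $p$ meets $\Sigma$ discretely, use a compactness argument against the cylindrical boundary to arrange $\overline{U_p}\cap\Sigma\subset V_p$ with the required radius gap $\delta_1^p<\delta_2^p$, and finish by compactness of $\Sigma\cap(\overline{B}_{2^{-1}}\setminus B_{2^{-2}})$. Your write-up simply spells out the two compactness steps (separating the $2$-disk boundary circle from $\Sigma$, then producing $\delta_1<\delta_2$) that the paper condenses into "one can easily construct".
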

\begin{proof}
By Lemma \ref{lem3.3}, we know $\Sigma$ is a codimension $2$ complex subvariety of $B\setminus 0$ which is $\C^*$ invariant. Then given any $p\in \Sigma \cap (\overline{B_{2^{-1}}}\setminus B_{2^{-2}})$, for a generic orthogonal projection $\rho_p$ to some $\C^{n-2}_{p}$ at $p$, $\rho_p^{-1}(y)\cap \Sigma$ consists of finitely many points for any $y\in B_{\delta^p_3}^{n-2}$ for some $\delta^p_3>0$. Then near $p$, one can easily construct a neighborhood $U_p$ of $p$ so that $U_p=B^{2}_{\delta^p_2}\times B^{n-2}_{\delta^p_3}\subset B_{\frac{3}{4}}\setminus \overline{B_{\frac{3}{16}}}$ for some $\delta^p_2, \delta^p_3>0$ and $\overline{U_p}\cap \Sigma \subset V_p$ where $V_p=B^2_{\delta_1^p}\times \overline{B^{n-2}_{\delta_3^p}}$ for some $\delta_1^p \in (0,\delta_2^p)$. Now we get an open cover of $\Sigma\cap (\overline{B_{2^{-1}}}\setminus B_{2^{-2}})$ given by $\cup_p U_p$. Since $\Sigma\cap (\overline{B_{2^{-1}}}\setminus B_{2^{-2}})$ is compact, one can get a finite subcover $\cup_k U_{p_k}$.
\end{proof}

\begin{prop}\label{prop2.13}
There exists $r_0\in (0, r_0']$ such that for any $r\in (0,  r_0]$, $ N^{2r}$ admits a good cover for all tangent cones $(A_\infty, \Sigma, \mu)$. 
\end{prop}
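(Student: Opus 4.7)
The strategy is to argue by contradiction and build a good cover for $N^{2r}$ by slightly thickening (in the $\C^2$-directions) a good cover of the bubbling set $\Sigma$, whose existence is guaranteed by Lemma~\ref{lem2.10}. The underlying heuristic is that for small $r$, $N^{2r}$ is a thin neighborhood of $\Sigma$: away from $\Sigma$, $A_\infty$ is smooth and $\nu$ vanishes locally, so the density $(|z|s)^{4-2n}\mu(B_{|z|s}(z))$ tends to $0$ at rate $s^4$ as $s\to 0$ and in particular eventually falls below $\epsilon_0/2$. The difficulty is to make this uniform across all tangent cones, which will be handled by a compactness and diagonal extraction argument.

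Concretely, suppose no such $r_0$ exists. Then there exist $r_i\to 0$ and tangent cones $(A_\infty^{(i)}, \Sigma^{(i)}, \mu^{(i)}) = \lim_k A_{j_k^{(i)}}$ such that $N^{2r_i}$ of the $i$-th tangent cone admits no good cover. A diagonal extraction of $\{A_{j_{k(i)}^{(i)}}\}$, with $k(i)\to\infty$ chosen so that the diagonal sequence approximates $A_\infty^{(i)}$ on an increasing exhaustion of $\C^n_*\setminus \Sigma^{(i)}$, combined with Uhlenbeck compactness produces yet another tangent cone $(A_\infty, \Sigma, \mu)$ with $\Sigma^{(i)}\to\Sigma$ in Hausdorff distance on compact subsets of $\C^n_*$, $A_\infty^{(i)}\to A_\infty$ smoothly (up to gauge) off $\Sigma$, and $\mu^{(i)}\to\mu$ weakly. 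Apply Lemma~\ref{lem2.10} to $\Sigma$ to produce a good cover $\{(U_k, V_k)\}_{k=1}^K$ with parameters $(\delta_1^k, \delta_2^k, \delta_3^k)$, and pick $\delta_1^k<\delta_1^{k,+}<\delta_2^k$; set $V_k^+:= B^2_{\delta_1^{k,+}}\times \overline B^{n-2}_{\delta_3^k}$.

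The claim is then that $\{(U_k, V_k^+)\}$ is a good cover of $N^{2r_i}$ for the $i$-th tangent cone once $i$ is large, contradicting our assumption. If not, then after passing to a subsequence there exist $z_i\in N^{2r_i}\cap(\overline{B}_{2^{-1}}\setminus B_{2^{-2}})$ with either $z_i\notin \bigcup_k U_k$ or $z_i\in \overline{U_k}\setminus V_k^+$ for some $k$. The limit $z=\lim z_i$ must lie outside $\Sigma$, since $\Sigma\cap \overline{U_k}\subset V_k\subset V_k^+$ and $\Sigma\cap (\overline{B}_{2^{-1}}\setminus B_{2^{-2}})\subset \bigcup_k U_k$. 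Choose an open neighborhood $W$ of $z$ with $\overline{W}\cap \Sigma=\emptyset$, on which $A_\infty$ is smooth, $\mu|_W = |F_{A_\infty}|^2\dvol|_W$, and $|F_{A_\infty}|\leq M$. For $i$ large, $\Sigma^{(i)}\cap W=\emptyset$ too, so smooth convergence yields $|F_{A_\infty^{(i)}}|\leq 2M$ on $W$ and $\mu^{(i)}|_W = |F_{A_\infty^{(i)}}|^2\dvol|_W$. But then
$$(|z_i|\cdot 2r_i)^{4-2n}\mu^{(i)}(B_{|z_i|\cdot 2r_i}(z_i)) \;\leq\; C(2M)^2 (|z_i|\cdot 2r_i)^4 \;\longrightarrow\; 0,$$
contradicting $z_i\in N^{2r_i}$. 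The most delicate point is the diagonal compactness step: one must ensure that the subsequential limit of the tangent cones $(A_\infty^{(i)},\Sigma^{(i)},\mu^{(i)})$ is itself a tangent cone of $A$, which requires choosing $k(i)$ carefully so that $j_{k(i)}^{(i)}\to\infty$ while preserving smooth convergence on exhausting open sets off the bubbling sets.
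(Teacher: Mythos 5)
Your overall strategy matches the paper's: argue by contradiction with $r_i\to 0$ and tangent cones $(A_\infty^{(i)},\Sigma^{(i)},\mu^{(i)})$ whose $N^{2r_i}$ lacks a good cover, extract a limiting tangent cone, take a good cover of its bubbling set via Lemma~\ref{lem2.10}, and show it also covers $N^{2r_i}$ for $i$ large. However, there is a genuine gap in the compactness step, and the paper resolves it by a different (and crucial) mechanism.

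The gap: you assert that a diagonal extraction of $\{A_{j_{k(i)}^{(i)}}\}$ produces a tangent cone $(A_\infty,\Sigma,\mu)$ with $\Sigma^{(i)}\to\Sigma$ in Hausdorff distance, $A^{(i)}_\infty\to A_\infty$ smoothly off $\Sigma$, and $\mu^{(i)}\rightharpoonup\mu$. None of this follows from Uhlenbeck compactness applied to the diagonal sequence. The diagonal sequence is a sequence of the original rescaled connections, so it does produce \emph{some} tangent cone $(A_\infty,\Sigma,\mu)$; but $\Sigma$ is by definition the concentration set of the diagonal sequence, and $\mu$ its weak curvature limit --- there is no a priori relation between these objects and the $\Sigma^{(i)}$, $\mu^{(i)}$, which are themselves limits along $k\to\infty$ for each fixed $i$. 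You flag this as ``the most delicate point'' but do not supply the argument, and this is precisely where the proof is incomplete. In particular your step ``for $i$ large, $\Sigma^{(i)}\cap W=\emptyset$'' (needed for the pointwise bound $|F_{A^{(i)}_\infty}|\le 2M$ on $W$) is exactly what is not justified.

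The paper sidesteps this issue using Theorem~\ref{Cut-Off}(II). Instead of trying to compare the \emph{limiting} objects $\mu^{(i)}$ and $\Sigma^{(i)}$ with the limiting objects $\mu,\Sigma$ of a diagonal sequence, it transfers the containment one level down: $(II)$ gives, for each $i$, a \emph{single} connection $A_{j_i}$ in the $i$-th subsequence with $N^{2r_i}\subset\Sigma^{4r_i}_{j_i}$, where $\Sigma^{4r_i}_{j_i}$ is a finite-level curvature concentration set defined purely from $|F_{A_{j_i}}|^2$. After passing to a subsequence so that $A_{j_i}$ converges to a tangent cone $(A_\infty,\Sigma,\mu)$, the convergence of $\Sigma^{4r_i}_{j_i}$ into $\Sigma$ is a direct consequence of Price monotonicity applied to $|F_{A_{j_i}}|^2$ (any Hausdorff limit point $z$ of these sets has $\Theta(\mu,z)\ge\epsilon_0/2$, forcing $z\in\Sigma$), and hence $N^{2r_i}$ accumulates on a subset of $\Sigma$. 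No comparison of $\mu^{(i)}$ with $\mu$, nor of $\Sigma^{(i)}$ with $\Sigma$, is ever needed. To repair your proof you would either have to develop the convergence $\mu^{(i)}\rightharpoonup\mu$ carefully via countable test functions and a quantitative diagonal (which also still leaves the $\Sigma^{(i)}\cap W=\emptyset$ claim open unless you replace your pointwise curvature bound by a monotonicity argument), or simply adopt the paper's transfer to $\Sigma^{4r_i}_{j_i}$, which is the cleaner route and the reason part (II) of Theorem~\ref{Cut-Off} is proved before Proposition~\ref{prop2.13}.
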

\begin{proof}
Otherwise, there exists a subsequence $r_i\searrow 0$ such that for each $r_i$ there exists $N^{2r_i}$ for some tangent cone $(A_{\infty}(i),\Sigma(i), \mu(i) )$ which does not admit a good cover. By using part (II) of Theorem \ref{Cut-Off}, for each $i$, there exists $A_{j_i}$ so that $N^{2r_{i}}\subset \Sigma^{4r_{i}}_{j_i}$. By passing to a subsequence, we can assume $\{A_{j_i}\}_i$ converge to some tangent cone $(A_\infty, \Sigma, \mu)$ and $\Sigma^{4r_i}_{j_i}$ converges to a closed subset of $\Sigma$. In particular, $\{N^{2r_i}\}_i$ converges to a closed subset of $\Sigma$. By Lemma \ref{lem2.10}, $\Sigma$ admits a good cover and we let $\cup_k U_k$ be the corresponding finite cover. Now we conclude that for $i$ large, $\cup_k U_k$ is also a good cover of $N^{2r_i}$, which is a contradiction. It suffices to verify the following
\begin{itemize}
\item $N^{2r_i}\cap( \overline{B_{2^{-1}}}\setminus B_{2^{-2}} )\subset \cup_k U_k$ for $i$ large. This is obvious since $\cup_k U_k$ is an open cover of $\Sigma \cap \overline{B_{2^{-1}}}\setminus B_{2^{-2}} $ and $\{N^{2r_i}\cap( \overline{B_{2^{-1}}}\setminus B_{2^{-2}})\}_i$ converge to a closed subset of $\Sigma \cap \overline{B_{2^{-1}}}\setminus B_{2^{-2}}$.   
\item $\overline{U_k}\cap N^{2r_i} \subset V_k$ for $i$ large. Otherwise, by passing to a subsequence and using the finiteness of $\{U_k\}_k$, we can assume for some fixed $k$, there always exists $z_{i} \in (\overline{U_k}\cap N^{2r_i})\setminus V_k$ for each $i$ and $z_i$ converges to $z\in \overline{U_k}\cap\Sigma$. Then $z\in V_k$ and thus $z_i\in V_k$ for $i$ large. Contradiction.
\end{itemize}
\end{proof}
As a direct corollary, we are ready to prove (III) for all $r\in(0, r_0]$. 
\begin{proof}[Proof of (III)]
By Proposition \ref{prop2.13}, $N^{2r}$ admits a good cover and let $\{U_k\}_k$ be the corresponding cover. So for each $k$
$$
\overline{U_k} \cap \Sigma \subset \overline{U_k}\cap N^{\frac{r}{2}}\subset \overline{U_k} \cap N^{2r} \subset V_k
$$  
where $\overline U_k =\overline{B^{2}_{\delta^k_2}}\times \overline{B_{\delta^k_3}^{n-2}}\subset \mathbb{C}^2 \times \mathbb{C}^{n-2}_{p_k}$ and $V_k=B^2_{\delta^k_1}\times \overline{B^{n-2}_{\delta^k_3}}$. Consider the projection $\rho_k: \C^2\times \mathbb{C}^{n-2}_{p_k} \rightarrow \C^{n-2}_{p_k}$.  By assumption, we have
$$(B^2_{\delta^k_2}\times \overline{B^{n-2}_{\delta^k_3}}) \cap \Sigma = \overline{U_k} \cap \Sigma \subset B^2_{\delta^k_1}\times \overline{B^{n-2}_{\delta^k_3}}$$
which implies $\rho_k^{-1}(y)\cap \Sigma \cap B^{2}_{\delta_2}$ is a compact complex analytic subvariety of $B^2_{\delta_2}$ and thus consists of finitely points for any $y\in \overline{B^{n-2}_{\delta_3}}$. For any $z\in \overline{N^{2r}\setminus N^\frac{r}{2}}\cap (\overline{B}_{2^{-1}}\setminus B_{2^{-2}})$, suppose $z\in U_k$, then $\rho^{-1}_k(\rho_k(z))\cap\Sigma \cap B^2_{\delta_2}$ consists of finitely many points which lie in $B^{2}_{\delta_1}$. As a result, one can easily find a flat holomorphic disk $D_z \subset U_k\cap B\setminus B_{2^{-2}}$ containing $z$ such that $D_z \cap \Sigma=\emptyset$ and $\p D_z \subset U_k\setminus V_k \subset (B_{\frac{3}{4}}\setminus\overline{B}_{2^{-2}})\setminus N^{2r}$. By varying the disk $D_z$, we can find an open neighborhood $V_z$ of $z$ so that for each $z' \in V_z$ there exists a flat holomorphic disk $D_{z'}\subset B\setminus B_{2^{-2}}$ so that $D_{z'}\cap \Sigma =\emptyset$ and $\p D_{z'} \subset (B_{\frac{3}{4}}\setminus\overline{B}_{2^{-2}})\setminus N^{2r}$. Furthermore, $\inf_{z'\in V_z}\min\{d(D_{z'}, \Sigma), d(\p D_z, \overline{B_{2^{-2}}})\}>0$. For $z\in (\overline{B}_{2^{-1}}\setminus B_{2^{-2}})\setminus N^{2r}$, it is obvious that one can do the same thing as above. As a result, we get an open cover $\cup_{z\in \overline{(\overline{B}_{2^{-1}}\setminus B_{2^{-2}})\setminus N^{\frac{r}{2}}}} V_z$ of $\overline{(\overline{B}_{2^{-1}}\setminus B_{2^{-2}})\setminus N^{\frac{r}{2}}}$.  Since $\overline{(\overline{B}_{2^{-1}}\setminus B_{2^{-2}})\setminus N^{\frac{r}{2}}}$ is compact, we can find a finite subcover $\cup_{z_i} V_{z_i}$. Let $C(r)=\min_{i} \inf_{z\in V_{z_i}} \min\{ d(D_{z_i},\sigma), d(\partial D_{z_i}, \partial \overline{B_{2^{-2}}})\}$. This finishes the proof.  
\end{proof}

\subsection{Convexity}\label{convexity}
In this section, we will refine the convexity result obtained in \cite{CS1}. Given a tangent cone $(A_\infty, \Sigma, \mu)$, suppose $W$ is a symmetric open subset of $(B\setminus \overline{B_{2^{-3}}})\setminus \Sigma$.  
\begin{defi}
A non-zero holomorphic section $s$ of $\E_\infty$ over $W$ is \emph{homogeneous} of degree $d$ if 
$$\nabla_{\p_r} s=dr^{-1}s. \footnote{Since $s$ is holomorphic, this is equivalent to  $\nabla_{J\p_r}s=\sqrt{-1}dr^{-1}s$.} $$
\end{defi}

The following has been observed in \cite{CS1} (see Lemma $3.2$ and the paragraph after Lemma $3.2$  in \cite{CS1}). 
\begin{lem}\label{descend}
Suppose $s$ is a homogeneous sections of $\E_\infty$ with degree $d$, then $d\in  ((\rk\E_\infty)!)^{-1}\mathbb{Z}.$ Furthermore, $s=\pi^*\underline s$ for some section $\underline s$ of some simple Hermitian-Yang-Mills cone summand.
\end{lem}

The following is a slight extension of  Proposition $3.6$ in \cite{CS1}. For the convenience of readers we include the proof here. 

\begin{prop}\label{Cauchy-Schwarz-Inequality}
Suppose $s\in H^{0}(W, \E_\infty)$ with $\int_{W}|s|^2 <\infty$, then 
$$
(\int_{W\cap (B_{2^{-1}}\setminus \overline{B_{2^{-2}}})}|s|^2)^2 \leq 
\int_{W\cap (B_{2^{-2}}\setminus \overline{B_{2^{-3}}})} |s|^2\cdot \int_{W\cap (B\setminus \overline{B_{2^{-1}}})}|s|^2.
$$ 
Furthermore, if $s$ is non-zero and the equality holds, then $s$ must be homogeneous. 
\end{prop}
\begin{proof}
Similar to the proof of Lemma $3.4$ in \cite{CS1}, we can write $s=\sum_d  s_d$ where $s_d$ is a homogeneous section of $\E_\infty$  over $W$ of degree $d\in ((\rk\E_\infty)!)^{-1}\mathbb{Z}$. Then we have
$$\int_{W\cap (B\setminus \overline{B_{2^{-1}}})}|s|^2=\sum_{d} \int_{W\cap (B\setminus \overline{B_{2^{-1}}})} |s_d|^2,$$
and 
$$\int_{W\cap (B_{2^{-1}}\setminus \overline{B_{2^{-2}}})}|s|^2=\sum_{d} 2^{-2d-2n}\int_{W\cap (B\setminus \overline{B_{2^{-1}}})} |s_d|^2,$$
and 
$$\int_{W\cap (B_{2^{-2}}\setminus \overline{B_{2^{-3}}})}|s|^2=\sum_{d} 2^{-4d-4n}\int_{W\cap (B\setminus \overline{B_{2^{-1}}})} |s_d|^2.$$
Now the conclusion follows from the general Cauchy-Schwarz inequality. 
\end{proof}

Now given a saturated subsheaf $\F\subset \E$ (which can be trivial),   we denote by $\pi_\F: \E\rightarrow \F$ the pointwise orthogonal projection with respect to the admissible Hermitian-Einstein metric $H$ and let $\pi_\F^{\perp}=\Id-\pi_\F$. Note $\pi_\F$ is only defined away from $\text{Sing}(\E/\F)$. In the following we shall work under the following hypothesis, and in our later application this hypothesis will always be verified. 

\

\emph{ $**$: Given any subsequence $\{j_i\}$, by passing to a further subsequence, $\{A_{j_i}\}_i$ converges to a tangent cone $(A_\infty, \Sigma, \mu)$, and the corresponding pull-backs of $\pi_{\F}$ under the map $z\mapsto 2^{-j_i}z$ converges locally smoothly to a projection map $\pi_\infty$ on $\E_\infty$ away from $\Sigma$. Furthermore, $\pi_\infty$ is exactly the orthogonal projection onto a HYM cone direct summand $\F_\infty\subset \E_\infty$ (away from $\Sing(\E_\infty/\F_\infty)$).}

\

Given any fixed $r\in (0, r_0]$, for any \emph{smooth} section $\sigma$ of $\E$ over $(B_{2^{-j-1}}\setminus \overline{B_{2^{-j-2}}})\setminus  2^{-j}E^r_{j}$ (See Equation \ref{Ejr} for definition of $E^r_j$), let 
\begin{equation}\label{eqn2.1}
\|\sigma\|^r_{j}=2^{jn}(\int_{(B_{2^{-j-1}}\setminus \overline{B_{2^{-j-2}}})\setminus 2^{-j}E^r_{j}} |\sigma|^2)^{\frac{1}{2}}.
\end{equation}

\begin{prop}\label{prop2.18}
Given any $r\in (0, r_0]$ and $\lambda\notin ((rank \E)!)^{-1}\mathbb{Z}$, there exists $j_0=j_0(r, \lambda)$ such that for all $j\geq j_0$, if  $s\in H^{0}(B_{j-1},\E)$ satisfies
$$\|\pi_\F^{\perp} s\|^r_{j} > 2^{-\lambda}\|\pi_\F^{\perp}s\|^r_{j-1},$$ 
then 
$$\|\pi_\F^{\perp} s\|^r_{j+1} \geq 2^{-\lambda}\|\pi_\F^{\perp}s\|^r_{j}.$$ 
\end{prop}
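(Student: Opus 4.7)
The plan is to argue by contradiction along the lines of the three-circle Lemma in \cite{CS1}, but refined to handle the pointwise projection $\pi_\F^\perp$ and the cut-off regions $2^{-j}E^r_j$. Suppose the conclusion fails, so there is a subsequence $j_i \to \infty$ and holomorphic sections $s_i \in H^0(B_{j_i-1}, \E)$ satisfying
\[
\|\pi_\F^\perp s_i\|^r_{j_i} > 2^{-\lambda}\|\pi_\F^\perp s_i\|^r_{j_i-1}, \qquad \|\pi_\F^\perp s_i\|^r_{j_i+1} < 2^{-\lambda}\|\pi_\F^\perp s_i\|^r_{j_i}.
\]
Rescaling by $z\mapsto 2^{-j_i}z$, each $s_i$ becomes a holomorphic section $\tilde s_i$ of $\E$ on $B_2\setminus\{0\}$ with respect to the pulled-back holomorphic structure, and the three norms become $L^2$-norms of $\pi_\F^\perp\tilde s_i$ over the annular regions
\[
\Omega^i_a = (B_{2^{a-1}}\setminus \overline{B_{2^{a-2}}}) \setminus (2^{-a} E^r_{j_i+a-1}), \qquad a=0,1,2,
\]
whose cut-off pieces together form $\Sigma^r_{j_i}$. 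By hypothesis $(**)$, after passing to a further subsequence, $A_{j_i}$ converges to an analytic tangent cone $(A_\infty,\Sigma,\mu)$ and the pull-backs of $\pi_\F$ converge locally smoothly off $\Sigma$ to the pointwise orthogonal projection $\pi_\infty$ onto a HYM cone summand $\F_\infty\subset\E_\infty$. By Theorem \ref{Cut-Off}(II) each cut-off set lies between $N^{r/2}$ and $N^{2r}$ for $i$ large, and by (I) they converge in measure to a common symmetric closed set $N_\infty$ with $N^{r/2}\subset N_\infty\subset N^{2r}$. Normalize so that $\int_{\Omega^i_1}|\pi_\F^\perp\tilde s_i|^2=1$.

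The main task is to produce a nontrivial holomorphic limit of $\pi_\F^\perp\tilde s_i$. Theorem \ref{Cut-Off}(III) supplies, for every point of $(\overline B_{1/2}\setminus B_{1/4})\setminus N^{r/2}$, a flat holomorphic disk $D_z\subset B_{3/4}\setminus\overline{B_{3/16}}$ with $d(D_z,\Sigma)\ge C>0$ and $\partial D_z\subset (B_{3/4}\setminus\overline{B_{3/16}})\setminus N^{2r}$. Restricting the holomorphic section $\tilde s_i$ to such disks and applying the maximum principle, the sup-norm of $\tilde s_i$ at any point of the annulus outside $N^{r/2}$ is controlled by its sup-norm on a fixed compact set $K$ lying at positive distance from both $\Sigma$ and $N^{2r}$. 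On $K$, interior elliptic estimates for $\bar\partial$ coupled to the locally smoothly converging connections $A_{j_i}$, together with the mean-value inequality, bound this sup-norm by the $L^2$-norm of $\tilde s_i$ on a slightly larger compact set, hence by the normalization. After applying the gauge transforms of Section \ref{Tangent Cone} and passing to a subsequence, $\tilde s_i$ converges locally smoothly away from $\Sigma$ to a holomorphic section of $\E_\infty$, which by Hartogs-type extension using the reflexivity of $\E_\infty$ and $\text{codim}\,\Sigma\ge 2$ extends to a holomorphic section $s_\infty$ of $\E_\infty$ on $B\setminus\overline{B_{2^{-3}}}$. The convergence of the projections gives $\pi_\F^\perp\tilde s_i\to\pi_\infty^\perp s_\infty$ locally smoothly off $\Sigma$, and combined with the uniform bound and the measure convergence of the cut-off sets this yields $\int_{W_1}|\pi_\infty^\perp s_\infty|^2=1$, where $W_a=(B_{2^{a-1}}\setminus\overline{B_{2^{a-2}}})\setminus N_\infty$. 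In particular $\pi_\infty^\perp s_\infty\not\equiv 0$.

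Set $W=(B\setminus\overline{B_{2^{-3}}})\setminus N_\infty$; this is a symmetric open subset of $(B\setminus B_{2^{-3}})\setminus\Sigma$ since Price's monotonicity and the density bound $\Theta(\mu,z)\ge\epsilon_0$ on $\Sigma$ force $\Sigma\cap(\overline B\setminus B_{2^{-3}})\subset N^{r/2}\subset N_\infty$. Passing the two violating inequalities to the limit yields
\[
\int_{W_1}|\pi_\infty^\perp s_\infty|^2 \ \ge\ 2^{-2\lambda}\int_{W_0}|\pi_\infty^\perp s_\infty|^2, \qquad \int_{W_2}|\pi_\infty^\perp s_\infty|^2 \ \le\ 2^{-2\lambda}\int_{W_1}|\pi_\infty^\perp s_\infty|^2,
\]
so $\int_{W_2}/\int_{W_1}\le 2^{-2\lambda}\le\int_{W_1}/\int_{W_0}$, which combined with Proposition \ref{Cauchy-Schwartz-Inequality} applied to $\pi_\infty^\perp s_\infty\in H^0(W,\E_\infty)$ forces equality in the Cauchy--Schwarz step. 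Hence $\pi_\infty^\perp s_\infty$ is homogeneous of some degree $d\in ((\rk\E_\infty)!)^{-1}\Z=((\rk\E)!)^{-1}\Z$, and computing the ratio of consecutive norms on $W$ forces $d=\lambda$, contradicting $\lambda\notin((\rk\E)!)^{-1}\Z$. The most delicate step is the nonvanishing of $\pi_\infty^\perp s_\infty$: without the disk-based boundary bound from Theorem \ref{Cut-Off}(III), the normalized $L^2$-mass of $\pi_\F^\perp\tilde s_i$ could concentrate in the thin shell $N^{2r}\setminus N^{r/2}$ or on $\Sigma$ itself, producing a trivial limit; defeating this concentration is precisely what the refined cut-off construction of Section 2.2 is designed to achieve, and it is the key new ingredient beyond \cite{CS1}.
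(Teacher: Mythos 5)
Your overall architecture matches the paper's: argue by contradiction, normalize the middle norm, pass to a tangent cone using hypothesis $(**)$ and Theorem \ref{Cut-Off}, establish non-triviality of the limit via the disk estimate of Theorem \ref{Cut-Off}(III), then apply Proposition \ref{Cauchy-Schwartz-Inequality} to force homogeneity of a fractional degree and obtain a contradiction. However, there is a substantive gap in how you produce the non-trivial limit, and it is precisely the point the paper treats with the most care.

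The normalization is on $\|\pi_\F^\perp s_i\|^r_{j_i}$, i.e.\ on the \emph{projected} section, not on $\tilde s_i$ itself. Nothing in the hypotheses bounds $|\tilde s_i|$: since $\F$ typically contains sections of lower degree, $|\tilde s_i|$ can blow up while $|\pi_\F^\perp \tilde s_i|$ stays normalized. Your writeup says you restrict $\tilde s_i$ to the disks $D_z$, obtain a sup bound via the maximum principle, then extract a smooth limit $s_\infty$ of $\tilde s_i$ and finally pass to $\pi_\infty^\perp s_\infty$. This order of operations fails twice. First, you have no $L^2$ control on $\tilde s_i$ on $K$, so the promised mean-value/elliptic bound ``hence by the normalization'' has nothing to appeal to. Second, even if one somehow renormalized $\tilde s_i$ by its own size, the resulting limit could lie entirely in $\F_\infty$, so $\pi_\infty^\perp s_\infty$ would vanish and you would not get a contradiction. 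The paper instead works directly with $\sigma_{j_i} := (2^{-j_i})^*\pi_\F^\perp s_{j_i}$, viewed as a holomorphic section of the quotient/orthogonal-complement bundle $\F^\perp_{j_i}$ (with its induced $\bar\partial$-operator): only then do the normalized $L^2$ bounds apply, and the limit is automatically a section of $\F^\perp_\infty$ so no further projection is needed.

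Relatedly, the maximum-principle step cannot be the naive one you invoke: $\pi_\F^\perp\tilde s_i$ is \emph{not} holomorphic with respect to $\bar\partial_\E$; it is holomorphic only as a section of $\F^\perp_{j_i}$, whose curvature $F_\nabla$ contains, in addition to $F_{A_{j_i}}$, the term $-(\bar\partial_{A_{j_i}}(2^{-j_i})^*\pi_\F)^*\wedge \bar\partial_{A_{j_i}}(2^{-j_i})^*\pi_\F$. The paper's Lemma \ref{lem2.20} proves the bound $\Delta_{D_z}\log(|\sigma_{j_i}|^2+1)\geq -|F_\nabla|$ and then uses the uniform curvature and $\bar\partial\pi_\F$-bounds away from $\Sigma$ (Equation (\ref{eqn2.5}), available because of $(**)$ and the distance control from Theorem \ref{Cut-Off}(III)) to close the estimate, with the $+1$ in the logarithm being essential. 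If one tries to run the maximum principle for a ``plainly holomorphic'' $\tilde s_i$, one avoids the $\bar\partial\pi_\F$ term but then one is estimating the wrong object. To repair your proof you should set $\sigma_{j_i}=(2^{-j_i})^*\pi_\F^\perp s_{j_i}$, observe it is $\bar\partial$-holomorphic for $\F^\perp_{j_i}$ over $(B\setminus\overline{B}_{2^{-3}})\setminus\Sigma^r_{j_i}$, prove the disk estimate for $\sigma_{j_i}$ with the corrected curvature, and then take the limit $\sigma_\infty\in H^0(W,\F^\perp_\infty)$ directly. With this fix the remaining steps (using Theorem \ref{Cut-Off}(I)--(II) to replace the three cutoff sets by a common measure-theoretic limit $V_1^r$, Shiffman-type extension across the codimension-four set $\Sigma$, and Proposition \ref{Cauchy-Schwartz-Inequality}) go through as you outline.
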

\begin{proof}
Otherwise, there exists a sequence of holomorphic sections $s_{j_i}\in H^{0}(B_{j_i-1}, \E)$ so that 
$$
\|\pi_\F^{\perp}s_{j_i}\|^r_{j_i}=1,
$$ 
and 
$$
\|\pi_\F^{\perp}s_{j_i}\|^r_{j_i-1}<2^{\lambda},
$$
 but 
$$
\|\pi_\F^{\perp}s_{j_i}\|^r_{j_i+1} < 2^{-\lambda}.
$$
By passing to a subsequence, we can assume $\{A_{j_i}\}_{i}$ converges to a tangent cone $(A_\infty, \Sigma, \mu)$, and the statements in $(**)$ hold. By passing to a further subsequence, we may assume $\{E^r_{j_i-1}\}_i, \{E^r_{j_i}\}_i$ and $\{E^r_{j_i+1}\}_i$ converge to $W^r_0, W^r_1$ and $W^r_2$ respectively, which are all symmetric. We then denote 
$$W^r:=2W^r_0\cup W^r_1\cup 2^{-1}W^r_2. $$
Let $\sigma_{j_i}=(2^{-j_i})^{*}\pi_\F^{\perp}s_{j_i}$. Then we have
$$\int_{(B\setminus \overline{B_{2^{-3}}})\setminus \Sigma^r_{j_i}}|\sigma_{j_i}|^2 < 1+2^{\lambda}+2^{-\lambda},$$
and
$$
((2^{-j_i})^{*}\pi^{\perp}_{\F})\circ \bp_{A_{j_i}} \sigma_{j_i}=0
$$
over $(B\setminus \overline{B}_{2^{-3}})\setminus \Sigma^{r}_{j_i}$. Hence $\sigma_{j_i}$ is a holomorphic section of $\F^{\perp}_{j_i}=(2^{-j_i})^*\F^\perp$ over $(B\setminus \overline{B}_{2^{-3}})\setminus \Sigma^{r}_{j_i}$ with uniformly bounded $L^2$ norm. Since we have smooth convergence of $(2^{-j_i})^*\pi_{\F}$ locally away from $\Sigma$, by standard elliptic theory, after passing to a subsequence, we can assume $\{\sigma_{j_i}\}_i$ converges to $\sigma_\infty$ locally smoothly over $(B\setminus \overline{B_{2^{-3}}})\setminus W^r$. Then $\sigma_\infty$  is a holomorphic section of $\F^{\perp}_\infty$ over $(B\setminus \overline{B_{2^{-3}}})\setminus W^r$ satisfying
$$
\int_{(B\setminus \overline{B_{2^{-1}}})\setminus 2W^r_0} |\sigma_\infty|^2 \leq 2^{\lambda},
$$
and 
$$
\int_{(B_{2^{-1}}\setminus \overline{B_{2^{-2}}})\setminus W^r_1} |\sigma_\infty|^2 \leq 1,
$$
and 
$$
\int_{(B_{2^{-2}}\setminus \overline{B_{2^{-3}}})\setminus 2^{-1}W^r_2} |\sigma_\infty|^2 \leq 2^{-\lambda}.
$$
Let $V^r_1:=W^r_0\cup W^r_1 \cup W^r_2$ and $V^r=2V^r_1 \cup V^r_1 \cup 2^{-1} V^r_1$. By Theorem \ref{Cut-Off} (I), we have
 $$m(V^r_1\setminus W^r_l)=m(W^r_l\setminus V^r_1)=0$$
for $l=0,1,2$. Then we have 
$$
\int_{(B\setminus \overline{B_{2^{-1}}})\setminus 2V^r_1} |\sigma_\infty|^2 \leq 2^{\lambda},
$$
and 
$$
\int_{(B_{2^{-1}}\setminus \overline{B_{2^{-2}}})\setminus V^r_1} |\sigma_\infty|^2 \leq 1,
$$
and 
$$
\int_{(B_{2^{-2}}\setminus \overline{B_{2^{-3}}})\setminus 2^{-1}V^r_1} |\sigma_\infty|^2 \leq 2^{-\lambda}.
$$
\begin{clm}\label{clm2.19}
$\int_{(B_{2^{-1}}\setminus \overline{B_{2^{-2}}}) \setminus V^r_1} |\sigma_\infty|^2=1.$
\end{clm}
Given this claim, by applying Proposition \ref{Cauchy-Schwarz-Inequality} to $\sigma_\infty$ over $(B\setminus \overline B_{2^{-3}})\setminus V^r$, we know $\sigma_\infty$ is a nonzero homogeneous section of $\E_\infty$ of degree $\lambda$ over  $(B\setminus \overline B_{2^{-3}})\setminus V^r$. This contradicts with our hypothesis that $\lambda \notin ((\rk \E)!)^{-1} \mathbb{Z}$. 
\end{proof}

\begin{proof}[Proof of Claim \ref{clm2.19}]
By assumption we know $N^{\frac{r}{2}}\subset \Sigma^{r}_{j_i}\cap \Sigma^r_{j_i-1}\cap \Sigma^r_{j_i+1}$ for $i$ large, so $N^{\frac{r}{2}}\subset V^r_1$. Then we have
$$
\|\sigma_{j_i}\|_{L^{\infty}((\overline{B_{2^{-1}}}\setminus B_{2^{-2}}) \setminus E_{j_i}^{r})}\leq \|\sigma_{j_i}\|_{L^{\infty}((\overline{B_{2^{-1}}}\setminus B_{2^{-2}}) \setminus N^{\frac{r}{2}})} .
$$
It suffices to prove that there exists $C=C(r)$ independent of $i$ such that for all $i$ large
\begin{equation}\label{eqn2.4}
\|\sigma_{j_i}\|_{L^{\infty}((\overline{B_{2^{-1}}}\setminus B_{2^{-2}}) \setminus N^{\frac{r}{2}})} \leq C.
\end{equation}
Indeed, given this, since $|\sigma_{j_i}|^2$ converge to $|\sigma|^2$ pointwisely over $(B_{2^{-1}}\setminus \overline{B_{2^{-2}}})\setminus E^r_{j_i}$, the conclusion follows from  Lebesgue's dominated convergence theorem as follows 
$$
\begin{aligned}
\int_{(B_{2^{-1}}\setminus \overline{B_{2^{-2}}}) \setminus V^r_1} |\sigma_\infty|^2
&=\int_{(B_{2^{-1}}\setminus \overline{B_{2^{-2}}}) \setminus W^r_1}|\sigma_\infty|^2 
\\
&=\int_{(B_{2^{-1}}\setminus \overline{B_{2^{-2}}}) } \chi|\sigma_\infty|^2
\\
&=\lim_i \int_{(B_{2^{-1}}\setminus \overline{B_{2^{-2}}}) } \chi_{j_i}|\sigma_{j_i}|^2\\
&=\lim_i \int_{(B_{2^{-1}}\setminus \overline{B_{2^{-2}}}) \setminus E_{j_i}^r} |\sigma_{j_i}|^2
&=1.  
\end{aligned}
$$
where all the equalities follow from the definition except the third one follows from Lebesgue's dominated convergence theorem. Here $\chi$ denotes the characteristic function over $B_{2^{-1}}\setminus \overline{B_{2^{-2}}}$ given by $(B_{2^{-1}}\setminus \overline{B_{2^{-2}}}) \setminus W^r_1$ and  $\chi_{j_i}$ denotes the charateristic function over $B_{2^{-1}}\setminus \overline{B_{2^{-2}}}$ given by $(B_{2^{-1}}\setminus \overline{B_{2^{-2}}}) \setminus E^r_{j_i}$, which converge to $\chi$ pointwisely by assumption.  

Now we prove inequality  \ref{eqn2.4}. By Theorem \ref{Cut-Off} (III), there exists a constant $C=C(r)>0$ so that for any $z\in \overline{(\overline{B_{2^{-1}}}\setminus B_{2^{-2}}) \setminus N^{\frac{r}{2}}}$, there exists a flat holomorphic disk so that $D_z \subset B_{\frac{3}{4}}\setminus B_{2^{-2}}$ with $D_z\cap \Sigma =\emptyset$ and $\p D_z\subset (B_{\frac{3}{4}}\setminus \overline{B_{2^{-2}}})\setminus N^{2r}$ and $d(D_z, \Sigma)\geq C>0$. Since $A_{j_i}$ converges to $A_\infty$ locally smoothly over $(B\setminus\{0\})\setminus \Sigma$, there exists a constant $C_1=C_1(r)$ so that for any $z\in B_{\frac{3}{4}}\setminus \overline{B_{\frac{3}{16}}}$ with $d(z,\Sigma) \geq C$
\begin{equation}\label{eqn2.5}
|F_{A_{j_i}}|(z)\leq C_1<\infty, \ 
|\bp_{A_{j_i}} (2^{-j_i})^*\pi_{\F}| \leq C_1<\infty.
\end{equation}
Then \eqref{eqn2.4} follows from the following 
\begin{lem}\label{lem2.20}
$|\sigma_{j_i}(z)| \leq C_2\cdot (\|\sigma_{j_i}\|_{L^{\infty}(\p D_z)}+1)$ where $C_2=C_2(r)$ is a constant independent of $i$. 
\end{lem}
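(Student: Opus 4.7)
The plan is to reinterpret $\sigma_{j_i}$ as a holomorphic section (with respect to a naturally induced $\bar\partial$-operator) of the smooth Hermitian subbundle $\F^\perp_{j_i}$ restricted to $D_z$, observe that the corresponding induced Chern connection has uniformly bounded curvature on $D_z$, and then deduce the pointwise estimate from a standard maximum principle argument for holomorphic sections of Hermitian bundles on a disc.

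More precisely, on the disc $D_z$ provided by Theorem \ref{Cut-Off}(III) we have $d(D_z,\Sigma)\geq C'=C'(r)>0$, so by (\ref{eqn2.5}) both $|F_{A_{j_i}}|$ and $|\bar\partial_{A_{j_i}}((2^{-j_i})^*\pi_\F)|$ are bounded above by $C_1(r)$, uniformly in $i$, on a fixed open neighborhood of $D_z$. In particular $(2^{-j_i})^*\F$ is a smooth Hermitian subbundle of $(2^{-j_i})^*\E$ there. Above the statement of the lemma the excerpt has already noted that $\sigma_{j_i}$ is holomorphic as a section of $\F^\perp_{j_i}$ with respect to the induced quotient/subbundle $\bar\partial$-operator $\bar\partial^{\F^\perp}:=(2^{-j_i})^*\pi^\perp_\F\circ\bar\partial_{A_{j_i}}|_{\F^\perp_{j_i}}$. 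The corresponding induced Chern connection on $\F^\perp_{j_i}$ has curvature whose pointwise norm is controlled by $|F_{A_{j_i}}|$ together with the square of the second fundamental form, the latter being dominated in norm by $|\bar\partial_{A_{j_i}}((2^{-j_i})^*\pi_\F)|$. Hence the induced curvature is bounded by some $C(r)$ on $D_z$, independently of $i$.

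Finally, for a holomorphic section $\sigma$ of a Hermitian vector bundle over a K\"ahler disc with Chern curvature uniformly bounded by $K$, the Poincar\'e--Lelong-type computation gives $\Delta_w\log|\sigma|^2\geq -K'$ on the locus $\{\sigma\neq 0\}$, where $w$ is a flat holomorphic coordinate on the disc. Therefore $\log|\sigma_{j_i}|^2+C(r)|w|^2$ is subharmonic on $D_z\setminus\{\sigma_{j_i}=0\}$; it extends as a subharmonic function across the zero set (being bounded above there, since it tends to $-\infty$), and the maximum principle on the disc $D_z$ of diameter $\leq 2$ yields
$$\log|\sigma_{j_i}(z)|^2\leq \sup_{\partial D_z}\log|\sigma_{j_i}|^2+C_2'(r),$$
from which $|\sigma_{j_i}(z)|\leq C_2(r)\|\sigma_{j_i}\|_{L^\infty(\partial D_z)}$, and the form stated in the lemma (with the $+1$ buffer) follows; the bound is vacuously true in the degenerate case $\sigma_{j_i}\equiv 0$ on $\partial D_z$, where $\sigma_{j_i}$ vanishes identically on $D_z$ by the identity principle.

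The main obstacle, and the reason this argument belongs in the present framework rather than being a purely local exercise, is to obtain constants truly independent of $i$. This hinges entirely on the definite lower bound $d(D_z,\Sigma)\geq C'(r)>0$ from Theorem \ref{Cut-Off}(III), which ensures that $A_{j_i}\to A_\infty$ and $(2^{-j_i})^*\pi_\F\to\pi_\infty$ smoothly on a uniform neighborhood of $D_z$, thereby producing the $i$-independent curvature bound used above.
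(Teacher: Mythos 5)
Your proposal is correct and follows essentially the same route as the paper: view $\sigma_{j_i}$ as a holomorphic section of $\F^\perp_{j_i}|_{D_z}$, use the uniform curvature and second-fundamental-form bounds from (\ref{eqn2.5}) on the $C'(r)$-neighborhood of $D_z$, and conclude by a $\Delta\log$ subharmonicity estimate plus the maximum principle. The only cosmetic difference is that the paper works with $\log(|\sigma|^2+1)$, which is smooth on the zero locus and directly produces the $+1$ buffer, whereas you work with $\log|\sigma|^2$ and appeal to removability of the $-\infty$ singularities along the discrete zero set; both are standard and yield the same conclusion.
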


\noindent Indeed, given this, by Theorem \ref{Cut-Off} (III), there exists $C_3=C_3(r)$ so that $d(\partial D_z, \partial \overline{B_{2^{-2}}})>0$ and we have 
\begin{equation}\label{eqn}
\|\sigma_{j_i}\|_{L^{\infty}((\overline{B_{2^{-1}}}\setminus B_{2^{-2}}) \setminus N^{\frac{r}{2}})}\leq C\cdot (\|\sigma_{j_i}\|_{L^{\infty}((B_{\frac{3}{4}}\setminus \overline{B_{2^{-2+\frac{C_3}{2}}}})\setminus N^{2r})}+1).
\end{equation}
By  Theorem \ref{Cut-Off} (II), we have 
$$d((B\setminus \overline{B}_{2^{-3}})\setminus N^{\frac{r}{2}}, \Sigma)>0$$ 
which implies $\{A_{j_i}\}_i$ converge to $A_\infty$ uniformly over a  neighborhood of $\overline{(B\setminus \overline{B}_{2^{-3}})\setminus N^{\frac{r}{2}}}$.
By Theorem \ref{Cut-Off} (II) again, we have 
$$\liminf_{i}d((B\setminus \overline{B}_{2^{-3}})\setminus N^{2r}, \Sigma_{j_i}^r)>0,$$
which implies that $(B_{\frac{3}{4}}\setminus \overline{B_{2^{-2+\frac{C_3}{2}}}})\setminus N^{2r}$ lies in the interior of $\overline{(B\setminus \overline{B_{2^{-2}}})}\setminus \Sigma^r_{j_i}$ for $j_i$ large and also has definite distance to the boundary of $\overline{(B\setminus \overline{B_{2^{-2}}})}\setminus \Sigma^r_{j_i}$ for $j_i$ large. As a result, using the uniform $L^2$ bound of $\{\sigma_{j_i}\}_{j_i}$ over $\overline{(B\setminus \overline{B_{2^{-2}}})}\setminus \Sigma^r_{j_i}$, by elliptic interior estimate, we get a uniform $L^\infty$ bound $\{\sigma_{j_i}\}_{j_i}$ over $(B\setminus \overline{B}_{2^{-3}})\setminus N^{2r}.$ This finishes the proof.  
\end{proof}

\begin{proof}[Proof of Lemma \ref{lem2.20}]
Let $\nabla:=  A_{\F^\perp_{j_i}}|_{D_z}$ which has curvature form  
$$
F_\nabla=(F_{A_{j_i}}-(\bp_{A_{j_i}} (2^{-j_i})^*\pi_\F )^*\wedge \bp_{A_{j_i}} (2^{-j_i})^*\pi_\F)|_{D_z}.
$$
Since $\sigma_{j_i}|_{D_z}$ is a holomorphic section of $\F^{\perp}_{j_i}|_{D_z}$,  we have 
\begin{equation}\label{eqn2.6}
\Delta_{D_z} \log (|\sigma_{j_i}|_{D_z}|^2+1) \geq -|F_\nabla|\geq -2C_1
\end{equation}
where the second inequality follows from \ref{eqn}. For the first inequality, we first identify $D_z$ with $\{t\in \C: |t|<\delta_z\}$ where $\delta_z$ is the radius of $D_z$ and by  a direct calculation, we have 
$$
\Delta \log (|\sigma|^2+1)=\frac{<\sigma, \nabla_{\bar{\p}_t}\nabla_{\p_t}\sigma>}{|\sigma|^2+1}+\frac{|\nabla_{\p_t}\sigma|^2}{|\sigma|^2+1}-\frac{< \nabla_{\p_t}\sigma, \sigma>< \sigma, \nabla_{\p_t}\sigma>}{(|\sigma|^2+1)^2}
$$
The difference of the last two terms is non-negative by Cauchy-Schwarz inequality, and for the first term we have
$$
\frac{<\sigma, \nabla_{\bar{\p}_t}\nabla_{\p_t}\sigma>}{|\sigma|^2+1}=\frac{<\sigma, F_{\nabla}(\frac{\p}{\p \bar t}, \frac{\p}{\p t})\sigma>}{|\sigma|^2+1}
\geq -|F_\nabla|.
$$
 As a result, we get
$$\Delta_{D_z}(\log (|\sigma_{j_i}|_{D_z}|^2+1)+2C_1|t|^2)\geq 0$$
Now the conclusion follows from the maximum principle. 
\end{proof}

\begin{rmk}\label{rmk2.27}
Note from the proof we have that for $i$ large $\|\sigma_{j_i}\|_{L^{\infty}((\overline{B_{2^{-1}}}\setminus B_{2^{-2}}) \setminus N^{\frac{r}{2}})}$ can be controlled by only $\|\sigma_{j_i}\|_{L^2((B\setminus \overline{B_{2^{-2}}}) \setminus \Sigma^r_{j_i})}$. 
\end{rmk}

\begin{prop}\label{Cor2.11}
Given a local holomorphic section $s$ of $\E$ in a neighborhood of $0$, the following is a well-defined number in $((\rk \E)!)^{-1}\mathbb{Z}\cup \{\infty\}$
$$
d^r_{\F}(s):=\lim_{j\rightarrow\infty}\frac{\log\|\pi^{\perp}_{\F}s\|^r_j}{-j\log 2}
$$
for any $r\in (0,r_0]$. 
\end{prop}
\begin{proof}
Denote 
$$b_j=\frac{\log \|\pi^{\perp}_\F s\|^r_j- \log \|\pi^{\perp}_\F s\|^r_{j-1}}{-\log2}.$$  By Proposition \ref{prop2.18}, for any $ \lambda\notin ((rank\E)!)^{-1} \mathbb{Z}$, there exists $j_0=j_0(\lambda)$ so that for $j\geq j_0$, if  $b_j \leq \lambda$, then $b_{j+1} \leq \lambda$. Then it follows directly that 
$$\liminf_{j\rightarrow\infty} b_j = \limsup_{j\rightarrow\infty} b_{j+1}.$$
 Indeed, if not, then there exists $\lambda \notin ((rank\E)!)^{-1} \mathbb{Z}$ so that 
 $$\liminf_{j\rightarrow\infty} b_j<\lambda<\limsup_{j\rightarrow\infty} b_{j+1}.$$
 In particular, we know that there exists a subsequence $b_{j_i}$ so that $b_{j_i} \leq \lambda$. Fix $j_{i_0}\geq j_0$ and by assumption, we know for any $j\geq j_{i_0}$, $b_j \leq \lambda$. By proposition \ref{prop2.18}, we have $\limsup_{j\rightarrow\infty} b_j \leq \lambda$ which is a contradiction. It follows from this that $\lim_{j\rightarrow\infty} b_j$ is well defined in $\mathbb{R} \cup \{\pm \infty\}$. Now we rule out $-\infty$ and show it is actually well-defined in $((rank \E)!)^{-1}\mathbb{Z} \cup \{\infty\}$. Fix a subsequence $\{j_i\}$ so that $\lim_iA_{j_i}=(A_\infty, \Sigma, \mu)$. 
 
\begin{clm}\label{clm2.29}
If $\lim_{j\rightarrow\infty} b_j<\infty$, $\{\sigma_{j_i}:=\frac{(2^{-j_i})^{*}\pi^{\perp}s}{\|\pi^{\perp}s\|^r_{j_i}}\}_i$ converges to a nontrivial homogeneous holomorphic section $\sigma_\infty$ over $(B_{2^{-1}}\setminus \overline B_{2^{-2}}) \setminus V^r$.
\end{clm}   
Given this claim, we must have $\lim_{j\rightarrow\infty} b_j>-\infty$. If it is not $\infty$, by Proposition \ref{prop2.18}, we know $\{\sigma_{j_i}:=\frac{(2^{-j_i})^{*}\pi^{\perp}s}{\|\pi^{\perp}s\|^r_{j_i}}\}_i$ converge to a homogeneous sections of degree equal to $\lim_{j\rightarrow\infty} b_j$ which implies 
$$\lim_{j\rightarrow\infty} \frac{\sum^j_{l=1} b_l}{j}=\lim_{j\rightarrow\infty} b_j \in ((rank \E)!)^{-1}\mathbb{Z}$$
by Lemma \ref{descend}. This finishes the proof.
\end{proof}

\begin{proof}[Proof of Claim \ref{clm2.29}]
Otherwise for any $\lambda\notin  ((rank \E)!)^{-1}\mathbb{Z}$, by Proposition \ref{prop2.18}, for $j_i$ large we have $\|\sigma_{j_i}\|_{L^2((B\setminus \overline{B_{2^{-1}}})\setminus E^{r}_{j_i-1})} \leq 2^{\lambda}$. In particular, we have a uniform bound on $\|\sigma_{j_i}\|_{L^2((B\setminus \overline{B_{2^{-2}}})\setminus \Sigma^{r}_{j_i-1})}$ given by $1+2^{\lambda}$. Let $\sigma_\infty$ denote the limit over $(B\setminus \overline{B_{2^{-2}}})\setminus (E^r_{j_i}\cup E^r_{j_i-1})$. By Remark \ref{rmk2.27}, we have a uniform bound on $\|\sigma_{j_i}\|_{L^{\infty}((\overline{B_{2^{-1}}}\setminus B_{2^{-2}}) \setminus N^{\frac{r}{2}})}$ which implies the strong convergence of $\{\sigma_{j_i}\}_i$ over $(B_{2^{-1}}\setminus \overline{B_{2^{-2}}})\setminus E^r_{j_i}$and thus the limit $\sigma_\infty$ must be nontrivial by the argument in the proof  Proposition \ref{prop2.18} . However, we also know that $\|\sigma_\infty\|_{L^2((B\setminus \overline{B_{2^{-1}}})\setminus W^r_2)}\leq 2^{\lambda}$ for any $\lambda\notin  ((rank \E)!)^{-1}\mathbb{Z}$ where $W^r_2$ denote the limit of $E^r_{j_i-1}$. In particular, $\sigma_\infty =0$ over $(B\setminus \overline{B_{2^{-1}}})\setminus W^r_2$, which is impossible. 
\end{proof}

\begin{prop}\label{prop2.23}
Given a local section $s$  of $\E$ near the origin, if $d^r_{\F}(s)$ is finite for some $r\in (0,r_0]$, then $\{\frac{(2^{-j_i})^*\pi^{\perp}_{\F}s}{\|\pi^{\perp}_{\F}s\|^r_{j_i}}\}_{i}$ converges to a non-trivial  homogeneous section $\sigma_\infty$ of degree $d^r_{\F}(s)$ of $\F^\perp_\infty$ over $(B_{2^{-1}}\setminus \overline B_{2^{-2}})\setminus \Sigma$,  which extends to a holomorphic section of $\F^\perp_\infty$ defined over $B_{2^{-1}}\setminus \overline B_{2^{-2}}$.
\end{prop}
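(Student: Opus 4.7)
The plan is to adapt the scheme already developed in Proposition \ref{prop2.18}. First I would pass to a further subsequence (still denoted $\{j_i\}$) along which: (a) hypothesis $(**)$ holds, so the projections $(2^{-j_i})^*\pi_{\F}$ converge smoothly away from $\Sigma$ to the orthogonal projection onto the HYM summand $\F_\infty \subset \E_\infty$; (b) the three cut-off sets $E^r_{j_i-1}, E^r_{j_i}, E^r_{j_i+1}$ converge in Hausdorff distance to compact sets $W^r_0, W^r_1, W^r_2$; and (c) the normalized ratios $N_{j_i+k}/N_{j_i}$ tend to $2^{-kd}$ for $k = -1, 0, 1$, where $d := d^r_{\F}(s)$ and $N_j := \|\pi^{\perp}_{\F} s\|^r_j$. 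For (c), the convergence of the ratios follows from Proposition \ref{prop2.18} applied by contrapositive: for each $\lambda\notin ((\rk\E)!)^{-1}\Z$ and $j$ large, the inequality $N_{j+1}/N_j < 2^{-\lambda}$ propagates backwards, so the sequence $N_{j+1}/N_j$ is eventually monotone through each such threshold. Combined with the given limit $-\log N_j/(j\log 2) \to d$, this forces $N_{j+1}/N_j \to 2^{-d}$.

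Next, I set $\sigma_i := (2^{-j_i})^* \pi^{\perp}_{\F} s / N_{j_i}$ on $(B\setminus\overline{B}_{2^{-3}}) \setminus \Sigma^r_{j_i}$. The same change-of-variables carried out in the proof of Proposition \ref{prop2.18} gives the bounds
$$\int_{(B_{2^{-1}}\setminus\overline{B}_{2^{-2}})\setminus E^r_{j_i}} |\sigma_i|^2 = 1, \quad \int_{(B\setminus\overline{B}_{2^{-1}})\setminus 2E^r_{j_i-1}} |\sigma_i|^2 \to 2^{2d}, \quad \int_{(B_{2^{-2}}\setminus\overline{B}_{2^{-3}})\setminus 2^{-1}E^r_{j_i+1}} |\sigma_i|^2 \to 2^{-2d},$$
together with the equation $((2^{-j_i})^*\pi^{\perp}_{\F})\circ \bp_{A_{j_i}}\sigma_i = 0$. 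From here I would invoke verbatim the mechanism in the proof of Proposition \ref{prop2.18} and Lemma \ref{lem2.20}: Theorem \ref{Cut-Off} (III) provides, through any point of $(\overline{B}_{2^{-1}}\setminus B_{2^{-2}})\setminus N^{r/2}$, a flat holomorphic disk avoiding $\Sigma$ whose boundary lies in a region of uniform curvature control, and the maximum-principle inequality (\ref{eqn2.6}) upgrades the interior $L^2$ bounds to uniform $L^\infty$ bounds on the $\sigma_i$ over compact subsets of the complement of $\Sigma$. Standard elliptic regularity for the $\bp$-operator then produces, after a further subsequence, a smooth limit $\sigma_\infty$ that is a holomorphic section of $\F^{\perp}_\infty$ on $(B\setminus\overline{B}_{2^{-3}})\setminus V^r$, where $V^r := 2V^r_1 \cup V^r_1 \cup 2^{-1}V^r_1$ and $V^r_1 := W^r_0 \cup W^r_1 \cup W^r_2$.

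Non-triviality and homogeneity then follow from the same pattern as in Claim \ref{clm2.19} and the end of the proof of Proposition \ref{prop2.18}. The $L^\infty$ control just established, together with Theorem \ref{Cut-Off} (I) (which gives $m(V^r_1 \setminus W^r_l) = m(W^r_l \setminus V^r_1) = 0$), lifts the middle-annulus identity to $\int_{(B_{2^{-1}}\setminus\overline{B}_{2^{-2}})\setminus V^r_1} |\sigma_\infty|^2 = 1$, so $\sigma_\infty\not\equiv 0$. Lower semi-continuity on the outer and inner annuli yields $\int_{\text{outer}\setminus 2V^r_1}|\sigma_\infty|^2 \le 2^{2d}$ and $\int_{\text{inner}\setminus 2^{-1}V^r_1}|\sigma_\infty|^2 \le 2^{-2d}$. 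Proposition \ref{Cauchy-Schwartz-Inequality} applied to $\sigma_\infty$ on the symmetric set $(B\setminus\overline{B}_{2^{-3}})\setminus V^r$ then gives $1 \le 2^{2d}\cdot 2^{-2d} = 1$, so the Cauchy--Schwarz estimate is saturated; the eigensection expansion in that proof forces $\sigma_\infty$ to be homogeneous of a single degree, and comparing the annular integrals pins this degree down to $d$.

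Finally, to extend $\sigma_\infty$ across $\Sigma$, I would first refine the domain of definition. For any compact $K \subset (\overline{B}_{2^{-1}}\setminus B_{2^{-2}})\setminus \Sigma$, one has $K \cap N^{r/2} = \emptyset$ for $r$ sufficiently small (since $(|z|r)^{4-2n}\mu(B_{|z|r}(z))\to 0$ as $r\to 0$ at any smooth point of $A_\infty$), and hence $K \cap \Sigma^r_{j_i} = \emptyset$ for $i$ large by Theorem \ref{Cut-Off} (II). A standard exhaustion plus diagonal argument in $r$, combined with the uniqueness of holomorphic continuation, upgrades $\sigma_\infty$ to a holomorphic section of $\F^{\perp}_\infty$ defined on all of $(B_{2^{-1}}\setminus\overline{B}_{2^{-2}})\setminus \Sigma$. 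Since $\Sigma = \pi^{-1}(\underline\Sigma)$ has complex codimension at least two by Lemma \ref{lem3.3}, and $\F^{\perp}_\infty$ is reflexive (as a HYM cone direct summand of the reflexive sheaf $\E_\infty$), the Hartogs extension property for reflexive coherent sheaves promotes $\sigma_\infty$ to a holomorphic section on the full annulus $B_{2^{-1}}\setminus\overline{B}_{2^{-2}}$. The hardest step will be the uniform $L^\infty$ bound on the $\sigma_i$ near the high-curvature region, which does not follow from elliptic theory alone and depends essentially on the complex codimension-two structure of $\Sigma$ encoded in the disk construction of Theorem \ref{Cut-Off} (III).
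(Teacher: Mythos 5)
Your overall architecture matches the paper's: convexity plus the maximum-principle $L^\infty$ bound from the disc construction of Theorem \ref{Cut-Off} (III) produce a non-trivial homogeneous limit on $(B\setminus\overline{B}_{2^{-3}})\setminus V^r$, and the Cauchy--Schwarz saturation (Proposition \ref{Cauchy-Schwartz-Inequality}) pins the degree down to $d^r_\F(s)$. That part is fine.

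There is a genuine gap in your final extension step. You invoke a ``diagonal argument in $r$'' plus ``uniqueness of holomorphic continuation'' to pass from the complement of $V^r$ to the complement of $\Sigma$, but this ignores that the construction for a smaller cutoff $r' < r$ produces a section normalized by $\|\cdot\|^{r'}_{j_i}$, not $\|\cdot\|^{r}_{j_i}$. On the overlap the two limits are therefore only equal up to an unknown nonzero constant, so uniqueness of continuation does not apply as stated; moreover $d^{r'}_\F(s)$ is a priori only $\le d^r_\F(s)$, and if it were strictly smaller the two homogeneous sections would have different degrees and could not be multiples of each other, leaving no way to assemble a single extension. The paper resolves this by applying the $L^\infty$ bound of Equation \eqref{eqn2.7} to the \emph{original} $\|\cdot\|^r$-normalized sequence $\sigma_{j_i}$, showing it already converges on the complement of $V^{r/2}$, and that its limit there is a nonzero multiple of the $\|\cdot\|^{r/2}$-normalized limit $\sigma'_\infty$ --- this simultaneously forces $d^{r/2}_\F(s)=d^r_\F(s)$ and produces a genuine extension, after which one iterates over $2^{-l}r$. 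You should spell out this comparison of normalizations rather than appealing to generic continuation. A minor further point: Theorem \ref{Cut-Off} (II) gives $\Sigma^r_{j_i}\subset N^{2r}$, so disjointness $K\cap\Sigma^r_{j_i}=\emptyset$ for $i$ large requires $K\cap N^{2r}=\emptyset$, not $K\cap N^{r/2}=\emptyset$ as you wrote (the containment $N^{r/2}\subset\Sigma^r_{j_i}$ points the wrong way). The final Hartogs extension across $\Sigma$ is correctly reduced to Shiffman-type removable singularities for coherent sheaves; the paper makes this explicit via a resolution of $(\F^\perp_\infty)^*$, but your phrasing is acceptable.
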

\begin{proof}
If $d^r_{\F}(s)<\infty$, by passing to a subsequence, it follows from the proof of Proposition \ref{prop2.18} that $\{\sigma_{j_i}:=\frac{(2^{-j_i})^{*}\pi^{\perp}s}{\|\pi^{\perp}s\|^r_{j_i}}\}_i$ converges to a nontrivial homogeneous holomorphic section $\sigma_\infty$ of degree $d^r_{\F}(s)$ over $(B\setminus \overline B_{2^{-3}}) \setminus V^r$. Furthermore, we also have 
\begin{equation}\label{eqn2.7}
\|\sigma_{j_i}\|_{L^{\infty}((B_{2^{-1}}\setminus \overline{B}_{2^{-2}})\setminus N^{\frac{r}{2}})} \leq C(\|\sigma_{j_i}\|_{L^\infty(B_{\frac{3}{4}}\setminus \overline{B}_{2^{-2}})\setminus N^{2r})}+1).
\end{equation}
for some $C=C(r)$. By definition, we know $d^r_{\F}(s)$ is decreasing when $r\rightarrow 0$. Hence we have $d^{\frac{r}{2}}_{\F}(s)<\infty$ which implies $\{ \frac{(2^{-j_i})^{*}\pi^{\perp}s}{\|s\|^{\frac{r}{2}}_{j_i}}\}$ converges to a homogeneous section $\sigma'_\infty$. Then by Equation (\ref{eqn2.7}), we can assume $\{\sigma_{j_i}\}_{i}$ converges to a nontrivial homogeneous holomorphic section of $\F^\perp_\infty$ over $(B\setminus \overline{B}_{2^{-3}})\setminus V^{\frac{r}{2}}$ which is a multiple of $\sigma'_\infty$. By repeating this process for $2^{-l}r$ inductively on $l\in \mathbb{Z}_{\geq 0}$ and passing to a diagonal subsequence, we can assume $\{\sigma_{j_i}\}_i$ converges to a nontrivial homogeneous holmorphic section $\sigma_\infty$ of $\F^{\perp}_\infty$ over $(B_{2^{-1}}\setminus \overline{B}_{2^{-2}})\setminus \Sigma$.  Now it remains to show that $\sigma_\infty$ extends to be a holomorphic section of $\F^{\perp}_\infty$ over $B_{2^{-1}}\setminus \bar{B}_{2^{-2}}$. Since $B_{2^{-1}}$ is a precompact Stein open set in $B$, we can find a finite resolution of $(\F^\perp_\infty)^{*}$ over $B_{2^{-1}}$ as 
$$
\O^{n_1} \rightarrow \O^{n_2} \rightarrow (\F^\perp_{\infty})^*\rightarrow 0.
$$
By taking the dual of the above exact sequence, we have the following exact sequence over $B_{2^{-1}}$
$$
0\rightarrow \F^\perp_\infty \rightarrow \O^{n_2} \rightarrow \O^{n_1}.
$$
Then we can view $\sigma_\infty$ as a holomorphic section of $\O^{n_2}$ over $(B_{2^{-1}}\setminus \overline B_{2^{-2}})\setminus \Sigma$. Since $\Sigma$ has Hausdorff codimension $4$, $\sigma_\infty$ extends to be a  holomorphic section of $\O^{n_2}$ over $B_{2^{-1}}\setminus \overline B_{2^{-2}}$ (see Lemma $3$ in \cite{Shiffman}).  This finishes the proof.
\end{proof}

\section{Uniqueness of tangent cone connections}
In this section, we will prove Theorem \ref{main}. The arguments are similar to that in \cite{CS1}, given Theorem \ref{Cut-Off}. The main difference is to replace the $L^2$ orthogonal projection in \cite{CS1} by  pointwise orthogonal projection, and for the convenience of readers, we reproduce the induction arguments to make this paper self-contained. To make the argument clear, we first deal with the case that $\underline \E$ is semistable in Section \ref{semistable}, and in Section \ref{unstable} we prove the case when $\underline \E$ is unstable. The technical part of the second case involves the construction of a good comparison metric, which has already been overcome in \cite{CS1}.

\subsection{Semistable Case}\label{semistable}
Assume $\underline \E$ is semistable and fix a Seshadri filtration for $\underline \E$ as 
$$
0=\underline \E_0 \subset \underline \E_1 \subset \underline\E_2 \subset \cdots \underline\E_m=\underline\E.
$$   
So 
$$Gr^{HNS}(\underline\E)\simeq \bigoplus_{p=1}^m \underline \E_p/\underline \E_{p-1}$$
Denote $\E_i=\pi^* \underline \E_i$. Theorem \ref{main} follows from 
\begin{thm}\label{thm3.3}
$(\E_\infty, A_\infty)$ is isomorphic to the natural Hermitian-Yang-Mills cone connection on $\psi_*\pi^*(Gr^{HNS}(\underline\E))^{**}$. Moreover, $\text{Sing}(\psi_*\pi^*(Gr^{HNS}(\underline\E))) \subset\Sigma$. 
\end{thm}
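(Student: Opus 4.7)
The plan is to induct along the Seshadri filtration of $\underline{\E}$, using the pointwise orthogonal projection machinery developed in Section~2 (Proposition~\ref{prop2.18}, Corollary~\ref{Cor2.11}, Proposition~\ref{prop2.23}) in place of the $L^2$ orthogonal projection used in \cite{CS1}. As in \cite{CS1}, after replacing $\underline{\E}$ by $\underline{\E}\otimes \O(k)$ for $k$ large, which does not affect the limit connection up to twisting, we may assume each quotient $\underline{\E}_p/\underline{\E}_{p-1}$ is globally generated. Let $HG_p\subset H^0(B,\E)$ denote the subspace arising from $H^0(\C\P^{n-1},\underline{\E}_p)$ via pullback, and recall from \cite{CS1} that since $\underline{\E}$ is semistable, every nonzero section in each $HG_p$ has a common explicit degree $d_0$ determined by the slope.

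For the base step $p=1$, since $\underline{\E}_1$ is stable and reflexive, I apply Proposition~\ref{prop2.23} with $\F=0$ (so that $d^r_\F$ coincides with the degree of \cite{CS1} by Remark~\ref{rmk2.23}). Rescaling and normalizing sections of $HG_1$ yields nontrivial homogeneous holomorphic sections of $\E_\infty$ on $B_{2^{-1}}\setminus\overline B_{2^{-2}}$, which extend across $\Sigma$ and, by $\C^*$-invariance, to $\C^n$. Exactly as in Section~4 of \cite{CS1}, these generate a sub-HYM-cone $\S_1\hookrightarrow \E_\infty$ isomorphic to $\psi_*\pi^*\underline{\E}_1$, and the stability/polystability of the pieces yields an orthogonal splitting $\E_\infty=\S_1\oplus \V_1$ on the locally free locus.

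The inductive step is where the novelty lies. Suppose we have built a HYM-cone direct summand $\F^{(p)}_\infty:=\S_1\oplus\cdots\oplus\S_p\subset\E_\infty$ with isomorphisms $\psi_*\pi^*(\underline{\E}_i/\underline{\E}_{i-1})^{**}\simeq \S_i$, and a corresponding saturated subsheaf $\F\subset\E$ (on $B$) whose rescalings realize the pointwise orthogonal projection onto $\F^{(p)}_\infty$ after passing to the limit. I then apply the pointwise projection $\pi_\F^\perp$ to sections $s\in HG_{p+1}\setminus HG_p$ and invoke Corollary~\ref{Cor2.11} and Proposition~\ref{prop2.23}: verifying that the common degree $d_0$ equals $d^r_\F(s)$ (using the upper bound from Proposition~\ref{Cauchy-Schwartz-Inequality} and the fact that the unprojected sections have degree $d_0$), the normalized rescalings $\sigma_{j_i}=(2^{-j_i})^*\pi_\F^\perp s/\|\pi_\F^\perp s\|^r_{j_i}$ converge to a nontrivial homogeneous section $\sigma_\infty$ of $\F_\infty^\perp$. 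The crucial point is that pointwise (not $L^2$) orthogonality forces $\sigma_\infty$ to land in $\F_\infty^\perp$ directly, so non-triviality of the induced map $\psi_*\pi^*(\underline\E_{p+1}/\underline\E_p)\to \F_\infty^\perp$ on $\C^n$ is automatic and does not require $\underline\E_p/\underline\E_{p-1}$ to be reflexive. Stability of $(\underline\E_{p+1}/\underline\E_p)^{**}$ then upgrades this to an isomorphism onto a HYM-cone summand $\S_{p+1}\subset \F_\infty^\perp$, and the usual polystability argument splits off $\S_{p+1}$ orthogonally. Iterating for $p=1,\dots,m$ and comparing ranks yields $\E_\infty\simeq\bigoplus_p \psi_*\pi^*(\underline\E_p/\underline\E_{p-1})^{**}=\psi_*\pi^*(Gr^{HNS}(\underline\E))^{**}$, and $A_\infty$ is the corresponding HYM cone connection.

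The hard part will be verifying hypothesis \textbf{(**)} at each induction step: one must show the rescaled pointwise projections $(2^{-j_i})^*\pi_\F$ converge smoothly away from $\Sigma$ to the orthogonal projection onto $\F^{(p)}_\infty$. This needs the inductively constructed isomorphism to hold in the $C^\infty$ sense on compact subsets of $B\setminus(\{0\}\cup\Sigma)$, which is why the cut-off machinery of Theorem~\ref{Cut-Off} and the boundary-aware estimate of Lemma~\ref{lem2.20} are essential. For the last statement, note that outside $\Sigma$ the convergence is smooth and each $\S_p$ is locally free there; since $\psi_*\pi^*(Gr^{HNS}(\underline\E))$ fails to be locally free exactly on $\pi^{-1}(\mathrm{Sing}(Gr^{HNS}(\underline\E)))$, and this set is $\C^*$-invariant, any point in it outside $\Sigma$ would contradict local freeness of the locally identified $\E_\infty$; hence $\mathrm{Sing}(\psi_*\pi^*(Gr^{HNS}(\underline\E)))\subset \Sigma$.
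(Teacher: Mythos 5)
Your proposal follows the paper's argument: the same induction along the Seshadri filtration, the same replacement of $L^2$ projection by pointwise projection $\pi_\F^\perp$, the same invocation of the convexity machinery (Proposition~\ref{prop2.18}, Corollary~\ref{Cor2.11}, Proposition~\ref{prop2.23}), the same observation that pointwise orthogonality makes non-triviality of the induced map $\E_{p+1}/\E_p\to\F_\infty^\perp$ automatic without reflexivity, and the same inductive verification of hypothesis \textbf{(**)} via convergence of rescaled projections. The one place where your sketch misattributes the mechanism is the degree identity $d^r_\F(s)=\mu(\underline\E)$: Proposition~\ref{Cauchy-Schwartz-Inequality} only yields well-definedness and convexity of $d^r_\F$, and the easy direction $d^r_\F(s)\geq d(s)=\mu(\underline\E)$ comes from $|\pi_\F^\perp s|\leq|s|$; the nontrivial inequality $d^r_\F(s)\leq\mu(\underline\E)$ (ruling out that $s$ asymptotically lies entirely in $\F$) is obtained in the paper via the two-sided comparison $C_\epsilon|z|^{2\epsilon}H_\epsilon\leq H\leq C_\epsilon^{-1}|z|^{-2\epsilon}H_\epsilon$ against a model metric $H_\epsilon=|z|^{2\mu(\underline\E)}\pi^*\underline H_\epsilon$ for which the projection is explicitly homogeneous, not from the three-circle inequality. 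This is the content of step $(c)_p$ in the paper's induction, and you would need to import it from \cite{CS1} as the paper does.
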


By tensoring with $\O(k)$ for $k$ large, we may assume the following for each $p\geq 1$
\begin{itemize}
\item  $\underline\E_p$ and $\underline \E_p/\underline \E_{p-1}$  are globally generated;
\item The following sequence is exact 
$$
0\rightarrow H^{0}(\C\P^{n-1}, \underline \E_{p-1}) \rightarrow H^{0}(\C\P^{n-1}, \underline \E_p)\rightarrow H^{0}(\C\P^{n-1},\underline \E_p/\underline{\E}_{p-1}) \rightarrow 0.
$$
\end{itemize}
Denote $HG_p :=\{\pi^* \underline s: \underline s\in H^0(\C\P^{n-1}, \underline \E_p)\}$. Then we have a filtration 
$$
0\subset HG_1 \subset HG_2\subset \cdots HG_m=HG.
$$

Denote $n_p:=\dim_{\C} HG_p/HG_{p-1}.$ For $p\geq 0$, let $\pi_p$ be the pointwise orthogonal projection from $\E$ to $\E_p$ with respect to the unknown metric $H$ and  $\pi_p^{\perp}=\Id-\pi_p$ which denotes the projection from $\E$ to the orthogonal complement of $\E_p$. Note $\pi_p$ and $\pi_p^{\perp}$ are both well-defined away from $\text{Sing}(\E/\E_p)$. Fix a basis  $\{\sigma_{p,l}| 1\leq p\leq m, 1\leq l\leq n_p\}$  of $HG$ so that $\{\sigma_{p,l}|1\leq l\leq n_p, 1\leq p\leq q\}$ form a basis for $HG_{q}$ for any $1\leq q\leq m$. For each $(p,l)$, we denote 
$$\sigma^j_{p,l}:=(2^{-j})^*(\pi_{p-1}^{\perp} \sigma_{p,l}). $$ 
We view these as  smooth sections of $\E$ defined on $B^*\setminus\text{Sing}(\E/\E_p)$.

Fix any $r\in (0,r_0]$, where $r_0$ is given in Theorem \ref{Cut-Off}.  Denote
 $$M^j_p=\sup_{1\leq l \leq n_p}\|\pi^{\perp}_{p-1}\sigma_{p,l}\|^r_{j}$$
  where $\|\cdot\|^r_{j}$ is defined as in Equation (\ref{eqn2.1}). The following is our starting point (compare Lemma 3.11 in \cite{CS1})
\begin{lem} \label{d0}
For any $s=\pi^{*}\underline s$, where $\underline s\in H^0(\C\P^{n-1}, \underline \E)$, we have $d^r_0(s)=\mu(\underline \E)$, where $d^r_0$ is the degree defined for the trivial subsheaf. 
\end{lem}

\begin{proof}
By Theorem $10.13$ in \cite{Kobayashi},  for any $\epsilon>0$, there exists a smooth Hermitian metric on $\underline H_\epsilon$ on $\underline{\E}$ so that 
$$\|\Lambda_{\omega_{FS}} F_{(\underline H_\epsilon, \bp_{\underline \E})}-\mu(\underline \E)Id\|_{L^{\infty}(\C\P^{n-1})}\leq \epsilon.$$
Let $H_\epsilon=|z|^{2\mu(\underline \E)}\pi^* \underline H_\epsilon$. By Lemma \ref{curvature}, since $|F_H|+|F_{H_\epsilon}|\in L^2(B^*)$, we know $ \log \Tr H_\epsilon^{-1}H \in L^{\frac{2n}{n-1}}(B^*)$ 
and $ \log \Tr H^{-1}H_\epsilon \in L^{\frac{2n}{n-1}}(B^*)$. By applying Lemma \ref{pde} with $g=\log \Tr H_\epsilon^{-1}H$ and $\log \Tr H^{-1}H_\epsilon$, we get
$$
C_\epsilon |z|^{2\epsilon} H_\epsilon\leq H \leq C^{-1}_\epsilon |z|^{-2\epsilon}H_\epsilon
$$
for some constant $C_\epsilon>0$. By definition, we have
$$d^r_0(s)=\lim_{j\rightarrow\infty} \frac{\log \|s\|^r_j}{-j\log2}\leq \lim_{j\rightarrow\infty} \frac{\log \int_{(B_{2^{-j-1}}\setminus \overline{B_{2^{-j-2}}})\setminus 2^{-j} E^r_j} |z|^{-2\epsilon}|s|^2_{H_\epsilon}}{-j\log2} \leq \mu(\underline \E)+\epsilon.$$
Similarly, $d_0^r(s) \geq \mu(\underline \E)-\epsilon$. By letting $\epsilon$ go to zero, we have $d_0^r(s)=\mu(\underline \E)$. 

\end{proof}

Now suppose $(A_\infty, \Sigma, \mu)$ is a tangent cone of $A$ given by the limit of a subsequence $\{A_{j_i}\}_i$. We shall prove the following statements by induction on $p\geq 1$.   Theorem \ref{thm3.3} is a direct corollary of these statements. 
\begin{itemize}
\item[$(a)_p$.] There is a simple HYM cone direct summand $\S_p$ of $\E_\infty$ which is isomorphic to $(\E_p/\E_{p-1})^{**}$ so that $\S_p \perp \S_{k}$ for any $k<p$ (We take $\S_0=0$ here);
\item[$(b)_p$.] $\Sing(\E_{p}/\E_{p-1})\cup \Sing(\E/\E_p)\subset \Sigma$, and   over $(B_{2^{-2}}\setminus B_{2^{-1}})\setminus \Sigma$, $\{\pi^{j_i}_p\}_{i}$ converges locally smoothly to the limit projection $\pi^\infty_p: \E_\infty \rightarrow \E_\infty$ given by $\oplus_{k\leq p} \S_k \subset \E_\infty$.  Here $\pi^{j_i}_p=(2^{-j_i})^*\pi_p$. In particular, the hypothesis $**$ in section \ref{convexity}  holds with $\E_p$ which enable us to run the convexity result.
We also denote by $(\E^{\infty}_p)^\perp$  the HYM cone direct summand of $\E_\infty$ so that 
$$\E_\infty=\oplus_{k\leq p} \S_k \oplus (\E^{\infty}_p)^\perp.$$ 
\item[$(c)_p$.] If $p<m$ then $d^r_{\E_p}(\sigma_{p+1,l})=\mu(\underline \E)$ for any $1\leq l\leq n_{p+1}$. Here $d^r_{\E_p}$ is well-defined due to $(b)_p$. 
\end{itemize}

\textbf{Base case $p=1$:}

\

\begin{itemize}
\item[$(a)_1$]  By Lemma \ref{d0}, after passing to further subsequence we may assume $\{\frac{\sigma^{j_i}_{1,l}}{M_1^{j_i}}\}_i$ converges to a holomorphic homogeneous section $\sigma_{1,l}^\infty$ of degree $\mu(\underline \E)$ away from $\Sigma$ for any $1\leq l\leq n_1$ and at least one of the limits is non-zero.  By assumption we have the following exact sequence of coherent sheaves 
$$
0\rightarrow R_1 \rightarrow \O^{\oplus n_1} \xrightarrow{\phi_1} \E_1\rightarrow 0
$$
where 
$$\phi_1(z)(a_1,\cdots, a_{n_1})=\sum_{ l=1}^{ n_1} a_{l} \sigma_{1,l}(z).$$
 Away from $\Sing(\E/\E_1)$, $\E_1$ can be viewed as a vector sub-bundle of $\E$. For $z\notin \Sigma\cup \text{Sing}(\E/\E_1)$, we define a vector bundle homomorphism
$$
\phi_1^\infty: \O^{\oplus n_1}\rightarrow \E_\infty
$$
by 
$$\phi_1^{\infty}(z)(a_1,\cdots, a_{n_1})=\lim_{i\rightarrow\infty}{(M^{j_i}_{1})}^{-1}{\sum_{l=1}^{n_1} a_{l} \sigma^{j_1}_{1,l}(z)}.$$
 If  $(a_1,\cdots, a_{n_1})$ is in the fiber of $(R_1)_z$, then by definition, we have $\sum_{l=1}^{n_1} a_{l} \sigma_{1,l}(z)=0$, hence $\sum_{l=1}^{n_1} a_{l} \sigma_{1,l}^{j_i}(z)=0$, which implies 
 $$\phi^{\infty}_1(z)(a_1,\cdots, a_{n_1})=0$$ and $\phi_1^\infty$ descends to a homomorphism away from $\Sigma$
$$
\psi_1: \E_1\simeq\O^{n_1}/R_1\rightarrow \E_\infty
$$
which satisfies $\psi_1(z)(\sigma_{1,l}(z))=\sigma^{\infty}_{1,l}(z)$. Let $Q_1$ be the maximal simple HYM cone  summand of $\E_\infty$ which contains the image of $\psi_1$. Note $Q_1$ is locally free away from $\Sigma$. Since $d(\sigma^{\infty}_{1,l})=\mu(\underline Q_1)$, by Lemma \ref{descend}, $\psi_1$ descends to be a nontrivial map defined over $\C\P^{n-1}\setminus (\pi(\Sigma)\cup \text{Sing}(\underline\E/\underline\E_1))$
$$
\underline \psi_1: \underline \E_1 \rightarrow \underline Q_1,
$$
where $\mu(\underline \E_1)=\mu(\underline \E)=\mu(\underline Q_1)$. By Lemma $3$ in \cite{Shiffman}, $\underline \psi_1$ extends to a sheaf homomorphism over the whole $\C\P^{n-1}$. So it realizes $\underline \E_1$ as a direct summand $\underline{\S}_1$ of $\underline Q_1$ by Corollary $2.5$ and Corollary $2.6$ in \cite{CS1}. This proves $(a)_1$ by letting $\S_1=\pi^* \underline \S_1$. 
\item[$(b)_1$] Since $\S_1$ is locally free away from $\Sigma$ and $\psi_1$ maps $\E_1$ isomorphically onto $\S_1$, we know in particular $\E_1$ must be locally free away from $\Sigma$, and $\psi_1$ is a vector bundle isomorphism away from $\Sigma$. By construction the bundle homomorphism $\psi_1$ then factors through the bundle homomorphism $\E_1\rightarrow Im(\E_1)\subset \E$. Hence on $B\setminus \Sigma$,  the map $\E_1\rightarrow\E$ must be an injective vector bundle homomorphism, and so $\E/\E_1$ is locally free. This implies $\Sing(\E/\E_1)\subset\Sigma$. Given any $z\notin \Sigma$,  choose a local orthonormal frame $\{e_{t}|1\leq t \leq \rk(\underline \E_1)\}$ for $\S_1$ near $z$. Then we can write $e_t=\sum_{l}a^t_l (z)\sigma^\infty_{1,l}(z)$ for each $t$, hence $\{e_t^{j_i}=\sum_{l} a^t_{l}\frac{ \sigma^{j_i}_{1,l}}{M^{j_i}_{1}}: 1\leq t \leq \rk(\S_1) \}$ is an approximately orthonormal frame of $(2^{-j_i})^*(\E_1)$ near $z$ which converges to $\{e_t\}$
smoothly. In particular, $\{\pi^{j_i}_{1}\}_i$ converges smoothly to $\pi^\infty_{1}$ given by $S_1\subset \E_\infty$. 
\item[$(c)_1$]  As in Lemma \ref{d0}, for any $\epsilon>0$, there exists a smooth Hermitian metric on $\underline H_\epsilon$ on $\underline{\E}$ so that 
$$
C_\epsilon |z|^{2\epsilon} H_\epsilon\leq H \leq C^{-1}_\epsilon |z|^{-2\epsilon}H_\epsilon
$$
for some constant $C_\epsilon>0$. Here $H_\epsilon=|z|^{2\mu(\underline \E)}\pi^* \underline H_\epsilon$.  Let $\pi^{\perp_{H_\epsilon}}_{1}\sigma_{2,l}$ denote the orthogonal projection of $\sigma_{2,l}$ to the orthogonal complement of $\E_{1}$ by using $H_\epsilon$. Then away from $\Sigma$ we have 
$$
|\pi^{\perp}_{1}\sigma_{2,l}|\leq  |\pi_1^{\perp_{H_\epsilon}}\sigma_{2,l}|\leq  C^{-1}_\epsilon |z|^{-2\epsilon} |\pi^{\perp_{H_\epsilon}}_{1}\sigma_{2,l}|_{H_\epsilon}.
$$
Here $|\cdot|$ denotes the norm defined by the unknown Hermitian-Yang-Mills metric $H$. Similarly 
$$C_\epsilon |z|^{2\epsilon} |\pi^{\perp_{H_\epsilon}}_{1}\sigma_{2,l}|_{H_\epsilon} \leq |\pi^{\perp}_{1}\sigma_{2,l}|.$$
 As a result, we have 
$$
d^{\epsilon}_{\E_{1}}(\sigma_{2,l})-\epsilon \leq d_{\E_{1}}(\sigma_{2,l}) \leq d^{\epsilon}_{\E_{1}}(\sigma_{2,l})+\epsilon, 
$$
where 
$$ d^{\epsilon}_{\E_{1}}(\sigma_{2,l})=\lim_{i\rightarrow\infty} \frac{\log \int_{(B_{2^{-j_i-1}}\setminus \overline{B}_{2^{-j_i-2}})\setminus  2^{-j_i}E^r_{j_i} }  |\pi^{\perp_{H_\epsilon}}_{1}\sigma_{2,l}|_{H_\epsilon}}{-2j_i\log 2}-n.$$
Since $H_\epsilon=|z|^{2\mu(\underline{\E})}\pi^* \underline H_\epsilon$, we have $\pi_{1}^{\perp_{H_\epsilon}}=\pi^* (\underline \pi^{\perp_{\underline H_\epsilon}}_{1})$. Using the fact that $\sigma_{2, l}=\pi^*\underline\sigma_{2,l}$, it is easy to see 
$$d^\epsilon_{\E_{1}}(\sigma_{2,l})=\mu(\underline\E)$$ 
 Then we have 
$$
\mu(\underline\E)-\epsilon\leq d^r_{\E_{1}}(\sigma_{2,l})\leq \mu(\underline \E)+\epsilon
$$
for any $\epsilon>0$. By letting $\epsilon\rightarrow 0$, we have $d^r_{\E_{1}}(\sigma_{2,l})=\mu(\underline \E)$. 
\end{itemize}

\textbf{Induction on $p$:}

\

 Suppose we have established the statements 
$$(a)_1, (b)_1, (c)_1,\cdots, (a)_{p-1}, (b)_{p-1},(c)_{p-1}.$$ 
\begin{itemize}
\item[$(a)_p$]  By $(c)_{p-1}$,  we have $d^r_{\E_{p-1}}(\sigma_{p, l})=\mu(\underline \E)$ for any $l$. By Corollary \ref{Cor2.11}, after passing to subsequence, $\{\frac{\sigma^{j_i}_{p,l}}{M^{j_i}_p}\}_{i}$ converges to homogeneous sections $\sigma^\infty_{p,l}$ of  $\E_\infty$ with degree $\mu(\underline \E)$ over $(B_{2^{-1}}\setminus \overline B_{2^{-2}}) \setminus \Sigma$ for each $l$ and at least one of them is  non-zero. By assumption, we have the following exact sequence of coherent sheaves on $B_{2^{-1}}\setminus \overline B_{2^{-2}}$, 
$$
0\rightarrow R_p \rightarrow \O^{\oplus n_p}\xrightarrow{\phi_p} \E_p/\E_{p-1}\rightarrow 0
$$
where $\phi_p(a_1, \cdots, a_{n_p})=\sum_l a_l\sigma_{p,l}$.  By $(b)_{p-1}$ we know $\E/\E_{p-1}$ is locally free away from $\Sigma$. For any $z\notin \Sigma \cup \Sing(\E_p/\E_{p-1})$, we define 
$$\phi^\infty_p: \O^{\oplus n_p} \rightarrow \E_\infty$$
by letting 
$$\phi^\infty_p(z)(a_1,\cdots, a_{n_p})=\lim_{i\rightarrow\infty} \frac{\sum_{l} a_{l}\sigma^{j_i}_{p, l}(z)}{M_p^{j_i}}=\sum_l a_l \sigma^{\infty}_{p,l}(z).$$ 
If $\phi_p(z)(\sum_{l}a_l \sigma_{p,l})=0$ in $\E_p/\E_{p-1}$, then by definition, we have $\sum_{l}a_l \sigma_{p,l}\in \E_{p-1}$, so $\sum_{l}a_l \sigma^{j_i}_{p,l}=0$.  Hence away from $\Sigma \cup \Sing(\E_p/\E_{p-1})$,  $\phi_p^{\infty}$ induces a nontrivial map 
$$\psi_p: \E_p/\E_{p-1}\rightarrow \E_\infty$$ 
which satisfies $\psi_p(\phi_p(\sigma_{p,l}(z)))=\sigma^\infty_{p,l}(z)$ for $z\notin \Sigma \cup\Sing(\E_p/\E_{p-1})$.  Let $Q_p$ be the maximal simple HYM cone summand containing the image of $\psi_p$. By $(b)_{p-1}$ and using the definition, we have 
$$
Q_p\subset (\E_{p-1}^\infty)^{\perp}
$$
away from $\Sigma$. Since $\sigma^{\infty}_{p,l}$ are all homogeneous sections of degree equal to $\mu(\underline \E)$, $\psi_p$ descends to  a nontrivial holomorphic map over $\C\P^{n-1} \setminus (\pi(\Sigma)\cup \text{Sing}(\underline\E/\underline\E_p))$ as 
$$\underline \psi_p: \underline \E_p /\underline \E_{p-1} \rightarrow \underline{Q}_p$$
where  $\mu(\underline \E_p/\underline \E_{p-1}) =\mu(\underline \E)=\mu(\underline Q_p)$. Then $\underline{\psi}_p$ extends to be a nontrivial holomorphic map defined over $\C\P^{n-1}$ and induces the following isomorphism 
$$\underline \psi_p: (\underline \E_p /\underline \E_{p-1})^{**} \rightarrow \underline{\S}_p, $$
where $\underline \S_p$ is a direct summand of $\underline Q_p$.
This proves $(a)_p$ by letting $\mathcal S_p=\pi^*\underline \S_p$. 
\item[$(b)_p$] By $(b)_{p-1}$, $\E/\E_{p-1}$ is  locally free away from $\Sigma$. Since $\E_{p}/\E_{p-1}$ is saturated in $\E/\E_{p-1}$, we know $\E_{p}/\E_{p-1}$ is reflexive away from $\Sigma$ by Proposition $5.22$ in \cite{Kobayashi}. Then away from $\Sigma$, $\psi_p$ is an isomorphism between $\E_p/\E_{p-1}$ and $\S_p$ which follows from the fact that reflexive sheaf is normal. Since $\S_p$ is locally free away from $\Sigma$, we know $\E_p/\E_{p-1}$ is also locally free away from $\Sigma$, and $\psi_p$ is a vector bundle isomorphism. As in the case $p=1$, since the map $\psi_p$ factors through the natural map $\E_p/\E_{p-1}\rightarrow \E/\E_{p-1}$, it follows that away from $\Sigma$, $\E_p/\E_{p-1}$ is a sub-bundle of $\E/\E_{p-1}$, and hence $\E_p$ is a sub-bundle of $\E$. This is equivalent to saying that $\E/\E_p$ is locally free away from $\Sigma$. For any $z\notin \Sigma$, we can choose $\{e'_t|1\leq t\leq \rk(\S_p)\}$ to be an orthonormal frame for $\S_p$ near $z$. Then we can write $e'_t=\sum_{l} a^t_{p,l}\sigma^\infty_{p,l}$ for each $t$ near $z$ and thus $\{\sum_{l} a^t_{p,l}\frac{\sigma^{j_i}_{p,l}}{M_p^{j_i}}|1\leq t \leq \rk(\S_p)\}$ is an approximately orthonormal frame for $(2^{-j_i})^{*}(\E_p)\cap ((2^{-j_i})^*(\E_{p-1}))^{\perp}$ near $z$ which smoothly converge to $\{e'_t: 1\leq t \leq \rk(\S_p)\}$. In particular, we have $\{\pi^{j_i}_{p}-\pi^{j_i}_{p-1}\}_i$ converges to $\pi^{\infty}_p $ given by $\S_p\subset \E_\infty$. Combining this with $(b)_{p-1}$, we have $\{\pi_p^{j_i}\}_i$ converges to the projection determined by $\oplus_{1\leq l\leq p} \S_l\subset \E_\infty$. This proves $(b)_p$.

\item[$(c)_p$] Finally $(c)_p$ follows line by line by replacing $\E_1$ with $\E_p$ in the proof $(c)_1$. So we have established $(a)_p, (b)_p, (c)_p$. This finishes the proof. 
\end{itemize}

\subsection{General Case}\label{unstable}
Now we assume $\underline \E$ is a general holomorphic vector bundle over $\C\P^{n-1}$. Let 
$$0=\underline \E_0\subset \underline \E_1\subset\cdots \underline\E_{m}=\underline \E$$ 
be the Harder-Narasimhan filtration of $\underline \E$, with $\mu_p=\mu(\underline\E_p/\underline\E_{p-1})$ strictly decreasing in $p$, and choose a filtration
$$\underline \E_{p-1}=\underline\E_{p, 0}\subset \underline \E_{p, 1}\subset \cdots \underline \E_{p, q_p}=\underline \E_{p}$$
so that 
$$0=\underline \E_{p, 1}/\E_{p-1}\subset \cdots \underline \E_{p, q_p}/\underline{\E}_{p-1}=\underline \E_{p}/\underline{\E}_{p-1}$$
is a Seshadri filtration of $\underline \E_p/\E_{p-1}$. By tensoring $\underline \E$ with  $\O(k)$ for $k$ large, we may assume the following for all $p$ and $q$, 
\begin{itemize}
\item $\underline\E_p$ and  $\underline\E_{p, q}$ are generated by its global sections;
\item we have a short exact sequence
$$0\rightarrow H^0(\C\P^{n-1}, \underline \E_{p, q-1})\rightarrow H^0(\C\P^{n-1}, \underline \E_{p, q})\rightarrow H^0(\C\P^{n-1}, \underline \E_{p, q}/\underline \E_{p, q-1})\rightarrow 0.$$
\end{itemize}

For $p=1, \cdots, m$, we  define 
$$HG_p:=\{s=\pi^*\underline s|\underline s\in H^0(\C\P^{n-1}, \underline \E_p)\}$$  and  
$$HG_{p,q}:=\{s=\pi^*\underline s|\underline s\in H^{0}(\C\P^{n-1}, \underline \E_{p,q})\}. $$
Then we have a filtration 
$$0 \subset HG_{1,1}\subset\cdots HG_{1,q_1}=HG_1\subset  \cdots \subset HG_m=HG. $$
In the following, we deonote $\E_p=\pi^*\underline{\E}_p$, $\E_{0,0}=0$ and $\E_{p,q}=\pi^*\underline{\E}_{p,q}$. Now we can repeat the proof in Section 3.1 for the semistable case here. The only difference in the proof is the calculation of the degree.

The following is taken from Proposition $3.22$ in \cite{CS1}.
\begin{prop}\label{constructedmetric}
For any $0<\epsilon<<1$, there exists a smooth Hermitian metric $H_{\epsilon}$ on $\E|_{B^*}$ satisfying the following
\begin{enumerate}[(i).]
\item $|F_{(H_{\epsilon}, \bp_\E)}|\in L^{1+\delta}(B^*)$ for some $\delta>0$;
\item $\limsup_{r\rightarrow 0} r^{1-2n}\int_{S^{2n-1}(r)}r^2|\Lambda_{\omega_0}F_{(H_\epsilon,\bp_\E)}| \leq\epsilon$;
\item $|z|^2|\Lambda_{\omega_0}F_{(H_{\epsilon},\bp_\E)}(z)|\leq \epsilon$ for $z\notin U$;
\item Outside $U$, we have 
$$
H_\epsilon =\sum_{p'} \pi^* \underline H_\epsilon(|z|^{\mu(\underline \E_{p'}/\underline \E_{p'-1} )}(\pi_{\epsilon, p'}-\pi_{\epsilon, p'-1})\cdot , |z|^{\mu(\underline \E_{p'}/\underline \E_{ p'-1} )}(\pi_{\epsilon, p'}-\pi_{\epsilon,p'-1})\cdot)  
$$
\end{enumerate}
Here $U=\pi^{-1}(\underline U) \cap B$ for some neighborhood of $\underline U$ of $Sing(Gr^{HNS}(\underline \E))$ and $\pi_{\epsilon,p'}$ denotes the orthogonal projection from $\E$ to $\E_{p'}$ defined by $\pi^*(\underline{H}_\epsilon)$.
\end{prop}

For any $\epsilon>0$, let $H_\epsilon$ be the metric above. Then we have the following (see also the proof of Proposition 3.17 in \cite{CS1})
\begin{cor}\label{cor}
$H\geq C |z|^\epsilon H_\epsilon$ for some constant $C$ outside $U$.
\end{cor}

\begin{proof}
By Lemma \ref{curvature}, since $F_{H_\epsilon}\in L^{1+\delta}(B^*)$ for some $\delta>0$ and $|F_{H}|\in L^2(B^*)$, we know 
$$\log^{+}\Tr H^{-1}H_\epsilon \in L^{\frac{n}{n-1}(1+\delta')}$$
where $\delta'=\min\{\delta, 1\}$. The conclusion now follows from applying Lemma \ref{pde} with $g=\log^{+} \Tr H^{-1}H_\epsilon$ and Item $(1)$ in Lemma \ref{curvature}.
\end{proof}

Fix any $r\in (0,r_0]$, where $r_0$ is given in Theorem \ref{Cut-Off}.  The following is an analogue of Proposition 3.17 in \cite{CS1}. Suppose $\lim_{i\rightarrow\infty}A_{j_i}=(A_\infty, \Sigma, \mu)$ is an analytic tangent cone and the rescaled projections $\{(2^{-j_i})^* \pi_{p,q}\}_i$ given by the orthogonal projection $\pi_{p,q}: \E\rightarrow \E_{p,q}$ converges to a projection map $\pi^{\infty}_{p,q}$ so that $\pi^\infty_{p,q}$ determines a direct HYM cone summand of $\E_\infty$. In particular this ensures $**$ in section \ref{convexity}  holds hence $d^r_{\E_{p, q}}$ is well-defined. Similar to the unstable case in \cite{CS1}, we can only get one-sided bound for the degree by using our analytic method. 

\begin{prop}\label{prop3.4}
Under the above assumption, we have for any $s\in HG_{p,q+1}\setminus HG_{p,q}$,  $$d^r_{\E_{p,q}}(s)\leq \mu(\underline\E_{p,q+1}/\underline\E_{p,q})$$
where we make the convention that when $q=q_p$, $(p, q+1)$ denotes $(p+1, 0)$.

\end{prop}

\begin{proof}
 For any $\epsilon>0$, let $H_\epsilon$ be the metric in Proposition \ref{constructedmetric}. By Corollary \ref{cor}, we know outside $U$, 
$$H\geq C |z|^\epsilon H_\epsilon$$ 
for some constant $C$. Furthermore, away from $U$, we have 
$$
H_\epsilon =\sum_{p'} \pi^* \underline H_\epsilon(|z|^{\mu(\underline \E_{p'}/\underline \E_{p'-1} )}(\pi_{ \epsilon
, p'}-\pi_{\epsilon, p'-1})\cdot , |z|^{\mu(\underline \E_{p'}/\underline \E_{ p'-1} )}(\pi_{\epsilon, p'}-\pi_{\epsilon,p'-1})\cdot)  
$$
where $\pi_{\epsilon,p'}$ is the pointwise orthogonal projection from $\E$ to $\E_{p'}$ with respect to the metric $\pi^*\underline H_\epsilon$. Similar to the proof of $(c)_1$ in the semistable case, we have
\begin{equation}\label{eqn3.1}
d^r_{\E_{p,q}}(s) \leq \lim_{i\rightarrow\infty}\frac{ 
\log (2^{2j_i n}\int_{(B_{2^{-j_i-1}}\setminus \overline{B_{2^{-j_i-2}}})\setminus (U\cup  2^{-j_i}E^r_{j_i})} |z|^\epsilon |(\pi^{\epsilon}_{p,q})^{\perp}s|^2_{H_\epsilon})}{-2j_i\log 2}
\end{equation}
where $\pi^\epsilon_{p,q}$ denotes the pointwise projection given by $\E_{p,q} \subset \E$ with respect to the metric $H_\epsilon$.
However, over $(B_{2^{-j_i-1}}\setminus \overline{B_{2^{-j_i-2}}})\setminus (U\cup  2^{-j_i}E^r_{j_i})$, we have 
$$(\pi_{p,q}^{\epsilon})^\perp s=(\pi_{\epsilon, (p,q) })^{\perp}s
$$
where $\pi_{\epsilon, (p,q) }$ denotes the pointwise projection given by $\E_{p,q} \subset \E$ with respect to the metric $\pi^* \underline H_\epsilon$. Then we have 
$$
|(\pi_{p,q}^{\epsilon})^{\perp}s|^2_{H_\epsilon}=|z|^{2\mu(\underline\E_{p,q+1}/\underline\E_{p,q})} | (\pi_{\epsilon, (p,q) })^{\perp}s|^2_{\pi^*\underline H_\epsilon}. 
$$
By plugging this into Equation (\ref{eqn3.1}), we have 
$$
\begin{aligned}
d^r_{\E_{p,q}}(s) 
&\leq \lim_{i\rightarrow\infty}\frac{ 
\log (2^{2j_i n}\int_{(B_{2^{-j_i-1}}\setminus \overline{B_{2^{-j_i-2}}})\setminus (U\cup  2^{-j_i}E^r_{j_i})} |z|^{2\mu(\underline\E_{p,q+1}/\underline\E_{p,q})+\epsilon} | (\pi_{\epsilon, (p,q) })^{\perp}s|^2_{\pi^*\underline H_\epsilon}}{-2j_i\log 2}\\
&=\mu(\underline\E_{p,q+1}/\underline\E_{p,q})+\frac{\epsilon}{2}.
\end{aligned}
$$
By letting $\epsilon \rightarrow 0$, we have 
$$d^r_{\E_{p,q}}(s)\leq \mu(\underline\E_{p,q+1}/\underline\E_{p,q}). $$
This finishes the proof.
\end{proof}

\begin{rmk}
\begin{itemize}
\item It will follow from the proof of Theorem \ref{main} that the equality holds in Proposition \ref{prop3.4}.
\item When $(p,q)=(0,0)$, the assumption holds trivially and thus for any nonzero $s\in HG_{1,1}$, $d_0^r(s)\leq \mu(\underline \E_{1,1})$.
\end{itemize}
\end{rmk}

Given this, we can finish the proof of Theorem \ref{main} by repeating what we did in the semistable case and replacing the Harder-Narasimhan filtration with a Harder-Narasimhan-Seshadri filtration. Indeed, Theorem \ref{main} is a direct consequence if we can prove the following statements by doing induction on $(p,q)$. 

\begin{itemize}
\item[$(a)_{p,q}$.] There is a simple HYM cone direct summand $\S_{p,q}$ of $\E_\infty$ which is isomorphic to $(\E_{p,q}/\E_{p,q-1})^{**}$ so that $\S_{p,q} \perp \S_{p',q'}$ for any $p'\leq p$ and $q'\leq q'$ with $p'+q'<p+q$. (We take $\S_{0,0}=0$ here);
\item[$(b)_{p,q}$.] $\Sing(\E_{p,q}/\E_{p,q-1})\cup \Sing(\E/\E_{p,q})\subset \Sigma$, and   over $(B_{2^{-2}}\setminus B_{2^{-1}})\setminus \Sigma$, $\{\pi^{j_i}_{p,q}\}_{i}$ converges locally smoothly to the limit projection $\pi^\infty_{p,q}: \E_\infty \rightarrow \E_\infty$ given by $\oplus_{k\leq p} \S_k \subset \E_\infty$.  Here $\pi^{j_i}_{p,q}=(2^{-j_i})^*\pi_{p,q}$. In particular, the hypothesis $**$ in section \ref{convexity}  holds with $\E_{p,q}$ which enable us to run the convexity result.
We also denote by $(\E^{\infty}_{p,q})^\perp$  the HYM cone direct summand of $\E_\infty$ so that 
$$\E_\infty=\oplus_{k\leq p} \S_k \oplus (\E^{\infty}_{p,q})^\perp.$$ 
\item[$(c)_{p,q}$.] $d_{\E_{p,q}}(\sigma)=\mu(\underline \E_p)$ for any $\sigma \in HG_{p,q+1}\setminus HG_{p,q}$. 
\end{itemize}
The only difference from the semistable case is that we only have one-sided bound of the degree. However the argument itself will force the inequality in Proposition \ref{prop3.4} to hold and so the argument from the semistable case applies exactly line by line in the general case.

\

\textbf{Base case $(1,1)$:} 

\

\begin{itemize}
\item[$(a)_{1,1}$]  We fix a basis for $HG_{1,1}$ as $\{\sigma_{(1,1), l}: 1\leq l \leq n_{1,1}\}$ where $n_{1,1}=\dim_{\C} HG_{1,1}$. Let $s_{l}^{j}=(2^{-j})^* \sigma_{(1,1),l}$. Define $M^{i}_{1,1}=max_{l} \|s_l^j\|^r_j$. By Proposition \ref{prop3.4} and Proposition \ref{prop2.23}, after passing to further subsequence we may assume $\{\frac{\sigma^{j_i}_{1,l}}{M_{1,1}^{j_i}}\}_i$ converges to a holomorphic homogeneous section of degree less or equal to $\mu(\underline \E_1)$ away from $\Sigma$ for any $1\leq l\leq n_1$ (we will see the degree will be all equal to $\mu(\underline \E_1)$) and at least one of the limits is non-zero.  By assumption we have the following exact sequence of coherent sheaves 
$$
0\rightarrow R_{1,1} \rightarrow \O^{\oplus n_1} \xrightarrow{\phi_{1,1}} \E_{1,1}\rightarrow 0
$$
where 
$$\phi_{1,1}(z)(a_1,\cdots, a_{n_1})=\sum_{ l=1}^{ n_1} a_{l} \sigma_{(1,1),l}(z).$$
 Away from $\Sing(\E/\E_{(1,1)})$, $\E_{1,1}$ can be viewed as a vector sub-bundle of $\E$. For $z\notin \Sigma\cup \text{Sing}(\E/\E_{1,1})$, we define a vector bundle homomorphism
$$
\phi_{1,1}^\infty: \O^{\oplus n_1}\rightarrow \E_\infty
$$
by 
$$\phi_{1,1}^{\infty}(z)(a_1,\cdots, a_{n_1})=\lim_{i\rightarrow\infty}{(M^{j_i}_{1,1})}^{-1}{\sum_{l=1}^{n_1} a_{l} \sigma^{j_1}_{1,l}(z)}.$$
 If  $(a_1,\cdots, a_{n_1})$ is in the fiber of $(R_{1,1})_z$, then by definition, we have $\sum_{l=1}^{n_1} a_{l} \sigma_{(1,1),l}(z)=0$, hence $\sum_{l=1}^{n_1} a_{l} \sigma_{(1,1),l}^{j_i}(z)=0$, which implies 
 $$\phi^{\infty}_{1,1}(z)(a_1,\cdots, a_{n_1})=0$$ and $\phi_{1,1}^\infty$ descends to a homomorphism away from $\Sigma$
$$
\psi_{1,1}: \E_1\simeq\O^{n_1}/R_{1,1}\rightarrow \E_\infty
$$
which satisfies $\psi_{1,1}(z)(\sigma_{(1,1),l}(z))=\sigma^{\infty}_{(1,1),l}(z)$. Furthermore, it is easy to notice that 
all the nonzero sections among $\{\sigma^{\infty}_{(1,1),l}\}_l$ have the same degree which is equal to the minimum of $d^r_0(\sigma_{(1,1), l})$ for all $l$. We can let $Q_{1}$ be the maximal simple HYM cone direct summand of $\E_\infty$ which contains the image of $\psi_{1,1}$. Note $Q_{1}$ is locally free away from $\Sigma$. By Lemma \ref{descend}, $\psi_{1,1}$ descends to be a nontrivial map defined over $\C\P^{n-1}\setminus (\pi(\Sigma)\cup \text{Sing}(\underline\E/\underline\E_1))$
$$
\underline \psi_{1,1}: \underline \E_{1,1} \rightarrow \underline Q_1,
$$
where $\mu(\underline \E_1)\geq\mu(\underline Q_1)$. Notice this is the only place we need to change compared to $(a)_1$ in the semistable case. By Lemma $3$ in \cite{Shiffman}, $\underline \psi_{1,1}$ extends to a sheaf homomorphism over the whole $\C\P^{n-1}$. So it has to be injective for otherwise the kernel of this map will violate the stability of $\underline{\E}_{1,1}$. In particular, $\mu(\underline \E_{1,1})=\mu(\underline Q_1)$ and $d^r_0(\sigma_{(1,1),l})=\mu(\underline \E_{1,1})$ for any $s\in HG_{1,1}$. Again, $\underline \psi_{1,1}$ realizes $\underline \E_{1,1}$ as a direct summand $\underline{\S}_{1,1}$ of $\underline Q_1$.  This proves $(a)_{1,1}$.
\item[$(b)_{1,1}$] Exactly the same argument as $(b)_1$ in the semistable case;
\item[$(c)_{1,1}$] Exactly the same argument as $(c)_1$ in the semistable case; 
\end{itemize}

\textbf{Induction on $(p,q)$:} 

\

Again, this is exactly the same as the semistable case except that we only have one-sided bound of the degree given by Proposition \ref{prop3.4}. However, the argument as in $(a)_{1,1}$ shows the equality holds in Proposition \ref{prop3.4}. The arguments in the semistable case can be then applied to finish the proof.

\section{Uniqueness of bubbling set with multiplicities}\label{UniquenessOfBubblingSet}
\subsection{Chern-Simons transgression}
In this section, we will collect some well-known results about the Chern-Simons transgression. Fix $\Delta$ to be smoothly isomorphic to $\{z\in \C^2: |z|\leq 1\}$ and let $E$ be a complex vector bundle of rank $m\geq 2$ over $\Delta$ with a preferred smooth trivialization over $\p\Delta$ (indeed $E$ is always abstractly trivial). Then any connection $A$ defined on $E|_{\p \Delta}$ can be viewed as a smooth one form and the Chern-Simons form is defined as
$$
CS(A)=\Tr(dA \wedge
 A+ \frac{2}{3} A \wedge A \wedge A).
$$
Given two such connections $A$ and $B$, we also define the relative Chern-Simons transgression form as
$$
CS(A, B):=\Tr(d_{B} a \wedge a +\frac{2}{3} a\wedge a \wedge a + 2 a\wedge F_{B}).
$$
Here $a=A-B$. Note $CS(A)=CS(A,0)$. Given a smooth isomorphism $g: E|_{\p\Delta}\rightarrow E|_{\p \Delta}$, we define the (complex) gauge transform of a connection $A$ on $E|_{\p \Delta}$ as 
$$
g\cdot A = g A g^{-1}-d g \cdot g^{-1}.
$$
\begin{lem}\label{CS}
The following holds
\begin{itemize}
\item[(a).] if $A$ and the preferred trivialization extend to a smooth connection of $E$ over the whole $\Delta$, then 
$$
\int_{\p\Delta}CS(A)=\int_{\Delta} \Tr(F_{A}\wedge F_{A});
$$
\item[(b).] $\int_{\p \Delta} CS(A, B)=\int_{\p\Delta}CS(A)-\int_{\p\Delta}CS(B)$;
\item[(c).] For any $g$ as above, $CS(g\cdot A, g\cdot B)=CS(A,B)$. In particular the relative Chern-Simons transgression does not depend on the choice of the common trivialization of $E|_{\p \Delta}$;
\item[(d).] $\deg(g):=\int_{\p\Delta}CS(g\cdot A, A)\in 8\pi^2\Z$ is independent of $A$ and it only depends on the isotopy class of $g$. Moroever, $\deg(g_1 g_2)=\deg(g_1)+\deg(g_2)$;
\item[(e).] If $g$ extends to be an isomorphism of $E$ over $\Delta$, then  $\deg(g)=0$.
\end{itemize}
\end{lem}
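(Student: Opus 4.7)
My plan is to base the whole lemma on the single identity
\[ d\bigl(\Tr(dA\wedge A + \tfrac{2}{3}A\wedge A\wedge A)\bigr) \;=\; \Tr(F_A\wedge F_A), \]
which I verify by expanding $F_A = dA + A\wedge A$ and noting that the quartic term $\Tr(A\wedge A\wedge A\wedge A)$ equals its own negative under a graded-cyclic rotation and hence vanishes. With this in hand, (a) is Stokes' theorem on $\Delta$. For (b), I set $a = A - B$, use $d_B a = da + B\wedge a + a\wedge B$, and expand both $CS(A,B)$ and $CS(A) - CS(B)$ in the variables $a, B, da, dB$ by direct trace manipulation; the two expressions agree up to $d\,\Tr(a\wedge B)$, which integrates to zero on the closed 3-manifold $\partial\Delta$. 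Part (c) is then routine: in the difference $a = A - B$ the inhomogeneous $-dg\,g^{-1}$ terms cancel, leaving $a \mapsto gag^{-1}$, while $F_B \mapsto gF_B g^{-1}$ and $d_B a \mapsto g(d_B a)g^{-1}$ by standard gauge equivariance, so each monomial in $CS(A,B)$ is preserved by invariance of trace under conjugation.

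The real content is (d). I would first prove independence from $A$ by choosing a smooth path $A_t$ of connections and applying the standard Chern-Simons variation formula $\delta\,CS(A) = 2\Tr(F_A\wedge\delta A) + d\,\Tr(A\wedge\delta A)$ to both $\int_{\partial\Delta}CS(g\cdot A_t)$ and $\int_{\partial\Delta}CS(A_t)$; the difference of derivatives vanishes because $F_{g\cdot A_t} = gF_{A_t}g^{-1}$, $\tfrac{d}{dt}(g\cdot A_t) = g\dot A_t g^{-1}$, and trace is conjugation-invariant. Given independence, the additivity $\deg(g_1g_2) = \deg(g_1) + \deg(g_2)$ follows from the cocycle identity $\int_{\partial\Delta}CS(X,Y) = \int_{\partial\Delta}CS(X,Z) + \int_{\partial\Delta}CS(Z,Y)$ (immediate from (b)) applied with $Z = g_2\cdot A$. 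For the integer quantization, I take $A = 0$ and use that $\theta = -dg\,g^{-1}$ is flat (Maurer-Cartan) to reduce $CS(g\cdot 0)$ to the Wess-Zumino form $\tfrac{1}{3}\Tr((g^{-1}dg)^3)$; then I glue two copies of $\Delta$ along $\partial\Delta$ via $g$ to build a closed 4-manifold $M$ and a complex bundle $\tilde E$ on it, equip $\tilde E$ with a smooth connection that restricts to $0$ on the first copy and to a smooth interior extension of $-dg\,g^{-1}$ on the second, and apply (a) on each copy (with matching boundary orientations) to identify
\[ \deg(g) \;=\; \int_M \Tr(F\wedge F) \;\in\; 8\pi^2\,\Z, \]
the last step being the Chern-Weil quantization on a closed 4-manifold (reducing to the $SU(m)$ case by twisting away $\det g$, which contributes nothing since $c_1$ vanishes against one of the hemispheres). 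Isotopy invariance then comes for free because $\deg$ depends continuously on $g$ and takes values in a discrete set.

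Finally, (e) is a one-liner from (a)--(c). If $g$ extends across $\Delta$, then $F_{g\cdot A} = gF_Ag^{-1}$ everywhere in $\Delta$, so $\int_\Delta \Tr(F_{g\cdot A}\wedge F_{g\cdot A}) = \int_\Delta\Tr(F_A\wedge F_A)$ by conjugation-invariance of trace; applying (a) on each side and subtracting via (b) gives $\deg(g) = 0$. The hardest step in the overall proof is the integer quantization in (d), where Chern-Weil theory on closed 4-manifolds enters essentially; everything else reduces to trace-algebraic bookkeeping and standard gauge-theoretic identities.
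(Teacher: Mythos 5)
Your proposal is correct and the overall architecture matches the paper's: (a) and (b) follow from the same two local identities $d\,CS(A)=\Tr(F_A\wedge F_A)$ and $CS(A)=CS(B)+CS(A,B)+d\,\Tr(a\wedge B)$, (c) is conjugation-invariance of trace, the quantization in (d) comes from gluing two copies of $\Delta$ along $\partial\Delta$ via $g$ to produce a bundle over $S^4$ and invoking Chern--Weil, and additivity is the cocycle identity. Where you diverge slightly: for independence of $A$ in (d) you use the Chern--Simons first-variation formula along a path $A_t$, whereas the paper derives it purely algebraically from (b) and (c) via
$\int_{\partial\Delta}\bigl(CS(A,g\cdot A)-CS(B,g\cdot B)\bigr)=\int_{\partial\Delta}\bigl(CS(A,B)-CS(g\cdot A,g\cdot B)\bigr)=0$;
both are valid, and the paper's version is marginally more self-contained since it avoids an additional identity. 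For (e) you compute directly that $\int_\Delta\Tr(F_{g\cdot A}^2)=\int_\Delta\Tr(F_A^2)$ when both $g$ and $A$ extend, then subtract via (b); the paper instead reuses the gluing picture (if $g$ extends, the glued bundle is trivial so the integral vanishes). Again both work. Your remark on reducing the quantization to the $SU(m)$ case by twisting away $\det g$ is more than needed --- on $S^4$ the class $c_1$ vanishes outright, so $\int_{S^4}\Tr(F\wedge F)=8\pi^2\int_{S^4}c_2\in 8\pi^2\Z$ directly --- but it does no harm.
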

\begin{proof}
$(a)$ follows from the fact that 
$$d(CS(A))=Tr(F_A\wedge F_A).$$
$(b)$ also follows from a direct calculation 
$$
CS(A)=CS(B)+CS(A,B)+d \Tr(a\wedge B).
$$
For $(c)$ we write $g\cdot A-g\cdot B=g(A-B)g^{-1}$. Denote $a=A-B$ and $g\cdot a = ga g^{-1}$. Then we have 
$$
\begin{aligned}
CS(g\cdot A, g\cdot B)
&=\Tr(d_{g\cdot B}g\cdot a \wedge g\cdot a +\frac{2}{3}g\cdot a\wedge g\cdot  a \wedge g\cdot a + 2 g\cdot a\wedge F_{g\cdot B})\\
&=Tr(g(d_Ba \wedge a+\frac{2}{3}a\wedge a \wedge a+2a\wedge F_{B}) g^{-1})\\
&=CS(A,B).
\end{aligned}
$$
For $(d)$, by $(b)$ and $(c)$, we have
$$
\begin{aligned}
\int_{\p \Delta} CS(A, g\cdot A)-CS(B, g\cdot B)
&=\int_{\p\Delta} CS(A)-CS(g\cdot A)-CS(B)+CS(g\cdot B)\\
&=\int_{\p \Delta}CS(A,B)-CS(g\cdot A, g\cdot B)\\
&=\int_{\p \Delta} CS(A,B)-CS(A,B)\\
&=0.
\end{aligned}.
$$ 
To see $\deg(g)\in 8\pi^2\Z$, we take the trivial connection $A_0$ on $E$ over $\Delta$, so $CS(A_0)=0$. Then we take another copy of $A_0$ and glue these two together along $\p\Delta$ using $g$ to form a connection $A_1$ on a bundle over $S^4$. Then we have 
$$
\deg(g)=CS(g\cdot A_0, A_0)=CS(g\cdot A_0)-CS(A_0)=\int_{S^4} Tr(F_{A_1}\wedge F_{A_1})\in 8\pi^2\Z. $$
Also we have 
$$
\begin{aligned}
\deg(g_1g_2)
&=\int_{\p \Delta} CS(A, g_1 g_2 A)\\
&=\int_{\p \Delta} CS(A, g_1 A)+ \int_{\p \Delta} CS(g_1 A, g_1 g_2 A)\\
&=\int_{\p \Delta} CS(A, g_1 A)+ \int_{\p \Delta} CS(A, g_2 A)\\
&=\deg(g_1)+\deg(g_2).
\end{aligned}.
$$
$(e)$ follows by using the same gluing argument. 
\end{proof}

\subsection{Proof of Theorem \ref{Bubbling set}}
In this section, we will prove Theorem \ref{Bubbling set}. By Theorem \ref{main},  given any analytic tangent cone $(\E_\infty, A_\infty, \Sigma, \mu)$,we know that the connection $A_\infty$ is given by the admissible Hermitian-Yang-Mills connection on $(Gr^{HNS}(\underline\E))^{**}$. More specifically,  write 
$$(Gr^{HNS}(\underline\E))^{**}=\oplus_l \underline Q_l, $$ 
where each $\underline Q_l$ is a stable reflexive sheaf on $\C\P^{n-1}$. Let $\underline S$ denote the set where $(Gr^{HNS}(\underline \E))^{**}$ is not locally free and $\mu_l$ denote the slope of $\underline Q_l$. Then Theorem \ref{main} tells us that away from $\pi^{-1}(\underline S)$,  we have
$$(\E_\infty, A_\infty, H_\infty)=(\pi^*(Gr^{HNS}(\underline \E))^{**},\bigoplus_l (\pi^* \underline A_l + \mu_l \partial \log |z|^2 \Id_{\pi^*\underline Q_l}), \bigoplus_l |z|^{2\mu_l}\pi^*\underline H_l)
$$  where $(\underline A_l, \underline H_l)$ is the (unique) admissible Hermitian-Yang-Mills connection over $\underline Q_l$. In particular 
$$\text{Sing}(A_\infty)=\pi^{-1}(\underline S).$$ 
In the following, we denote 
$$\underline A := \oplus_l \underline A_l$$ and 
$$a_\infty:=\oplus_l \mu_l \p \log|z|^2 \Id_{\pi^* \underline Q_l}.$$

Let $(A_\infty, \Sigma, \mu)$ be an analytic tangent cone associated to a subsequence $\{j_i\}\subset\{i\}$. Let $\underline H'$ be a fixed smooth Hermitian metric on $\underline \E$ and let $\underline A'$ be the Chern connection of $(\underline H', \bp_\E)$. Denote $H'=\pi^*\underline H'$.  Following the convention in Section \ref{Tangent Cone}, there exits a unitary isomorphism $P$ outside $\Sigma$ 
$$
P: (\E, H')\rightarrow (\pi^*(Gr^{HNS}(\underline \E))^{**}, H_\infty)
$$
and a sequence of unitary isomorphism $\{g_i\}$ of $(\E, H')$ so that $\{g_i \cdot A_{j_i}\}_i$ converge to $P^{*} A_\infty$ smoothly outside $\Sigma$. Here $A_{j_i}$ denotes the Chern connection associated to $(H', f_i\circ\bp_{\pi^*\underline \E}\circ f_{i}^{-1})$ where $f_i=(H'^{-1} (2^{-j_i})^* H)^{\frac{1}{2}}$. We fix a Harder-Narasihman-Seshadri filtration for $\underline \E$ as 
$$
0\subset \underline \E_1 \subset \cdots \underline\E_m=\underline \E. 
$$
Let $\underline Q'_l$ be the orthogonal complement of $\underline \E_{l-1}$ in $\underline \E_{l}$ with respect to $\underline H'$. By doing  orthogonal projection, we can identify $\underline \E$ smoothly with $\oplus_l \underline Q_l'$ away from $\text{Sing}(Gr^{HNS}(\underline \E))\subset \Sigma$ 
\begin{equation}\label{1}
\underline \rho: \underline \E\rightarrow Gr^{HNS}(\underline \E).
\end{equation}
We also denote $\rho=\pi^*\underline \rho$. Let 
\begin{equation}\label{2}
\underline\iota: Gr^{HNS}(\underline \E) \hookrightarrow (Gr^{HNS}(\underline \E))^{**} 
\end{equation}
be the natural inclusion map.

Now we will follow the discussion in \cite{SW}. 
Let $\underline \Sigma^{alg}$ denote the proper analytic subvariety in $\C\P^{n-1}$ where $Gr^{HNS}(\underline \E)$ is not locally free. 
Define 
$$
\underline\T:=(Gr^{HNS}(\underline \E))^{**}/Gr^{HNS}(\underline \E)
$$
which is a torsion sheaf over $\C\P^{n-1}$. Then we have 
$$
\underline\Sigma^{alg}=\text{supp}(\underline \T) \cup \underline S.
$$
By  Proposition $2.3$ in \cite{SW}, on the complement of $\underline S$, $\underline \Sigma^{alg}$ has pure complex codimension $2$. Then we define $\underline \Sigma^{alg}_b$ as the union of irreducible codimension $2$ components in $\underline\Sigma^{alg}$. For each irreducible component $\underline \Sigma_k$ of $\underline \Sigma_b^{alg}$, we can associate an algebraic multiplicity $m^{alg}_k$ to $\underline\Sigma_k$ by letting 
$$m_k^{alg}:=h^{0}(\underline\Delta, \underline \T |_{\underline \Delta} )$$ 
where $\underline\Delta$ is a holomorphic transverse slice at a generic point of $\underline \Sigma_k$.  We write
$$\Sigma^{alg}_b=\sum_k m_k^{alg} \Sigma_k$$
where $\Sigma_k=\pi^{-1}(\underline \Sigma_k)$.

Given an irreducible component $\Sigma_k$ of $\Sigma_b^{an}\cup \Sigma_b^{alg}$, it has been shown how  to calculate the algebraic multiplicity in \cite{SW}.  More specifically, choose a class $[\underline \Delta]$ in $H_4(\C\P^{n-1}, \mathbb{Q})$ whose intersection product with $\underline \Sigma_k$ is nonzero and $[\underline \Delta]$ can be represented as a codimension $2$ subvariety $\underline \Delta$ of $\C\P^{n-1}$  which intersects $\underline \Sigma_k$ transversally and positively at points $\{\underline z_1,\cdots, \underline z_N\}$. Now we have the following (see Equation $(4.5)$ in \cite{SW}), 

\begin{equation}\label{eqn4.1}
\begin{aligned}
N m^{alg}_k=([\underline \Delta]\cdot [\underline\Sigma_k])m^{alg}_k
=&\int_{\cup^{N}_{l=1}\underline\Delta \cap \underline B_{\sigma}(z_l)}\frac{1}{8\pi^2}\{\tr(F_{\underline A'}\wedge F_{\underline A'})-\tr(F_{\underline\tau^*\underline A} \wedge F_{\underline\tau^*\underline A})\}\\&
-\int_{\cup^{N}_{l=1} \underline\Delta \cap \p (\underline B_{\sigma}(z_l) )}\frac{1}{8\pi^2}CS(\underline A', \underline\tau^{*}\underline A)
\end{aligned}
\end{equation}
Here $\underline \tau=\underline \iota\circ \underline \rho$, where $\underline\iota$ and $\underline\rho$ is defined in Equation \ref{1} and \ref{2}. In \cite{SW}, the result is only stated for an irreducible component of $\Sigma_b^{alg}$ but the calculation obviously holds for any codimension $2$ subvariety (indeed, if $\Sigma_k$ is not a component of $\Sigma^{alg}_b$, then $m^{alg}_k=0$). By choosing $\sigma$ small, for each $\underline\Delta_l$, we can choose a holomorphic lifting $\Delta_l$ in $B_{2^{-1}}\setminus \overline {B_{2^{-2}}}$. Then Equation (\ref{eqn4.1}) can be rewritten as 
\begin{equation}\label{eqn4.2}
\begin{aligned}
N m^{alg}_k=&\int_{\cup^{N}_{l=1}\Delta_l }\frac{1}{8\pi^2}\{\tr(F_{\pi^*\underline A'}\wedge F_{\pi^*\underline A'})-\tr(F_{\pi^*\underline A} \wedge F_{\pi^* \underline A})\}\\&
-\int_{\cup^{N}_{l=1} \p \Delta_l}\frac{1}{8\pi^2}CS(\pi^*\underline A', \tau^* (\pi^*\underline A)).
\end{aligned}
\end{equation}
where $\tau=\pi^*\underline\tau$.

\begin{cor}\label{Cor4.2}
$$Nm_k^{alg}=Nm_k^{an}-\lim_i\int_{\cup^{N}_{l=1} \p \Delta_l}\frac{1}{8\pi^2}CS(A_{j_i}, \tau^* A_\infty).$$
\end{cor}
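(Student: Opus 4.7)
The plan is to convert both sides of \eqref{eqn4.2} and the analytic multiplicity formula of Lemma \ref{AnalyticMultiplicity} into boundary Chern--Simons integrals via Lemma \ref{CS}(a), then exploit the pullback invariance of the relative transgression in Lemma \ref{CS}(c) to identify the difference with $\int CS(A_{j_i},\tau^*A_\infty)$.

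First I would place each $\Delta_l$ in sufficiently general position so that $z_l$ is a smooth point of $\Sigma_k$ with $\Delta_l$ transverse to $\Sigma_k$ there, $\Delta_l$ avoids $\Sing(A_\infty)=\pi^{-1}(\underline S)$ entirely, and $\p\Delta_l\cap\Sigma=\emptyset$. The last two conditions can be arranged by a small perturbation because $\underline S$ has complex codimension at least $3$ in $\C\P^{n-1}$ while $\Delta_l$ has complex dimension $2$, and $\Sigma$ has real codimension $4$ while $\p\Delta_l$ has real dimension $3$. Such a perturbation does not alter the intersection pairing $[\underline\Delta]\cdot[\underline\Sigma_k]=N$ underlying \eqref{eqn4.2}. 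Under these conditions $A_{j_i}$ and $A':=\pi^*\underline A'$ extend smoothly across $\Delta_l$ as connections on $\E$, while $A_\infty$ and $\tilde A:=\pi^*\underline A$ extend smoothly across $\Delta_l$ as connections on $\E_\infty$, and $\tau\colon\E\to\E_\infty$ is a smooth bundle isomorphism in a neighborhood of $\p\Delta_l$.

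Next I would apply Lemma \ref{CS}(a) to the two bulk integrals in \eqref{eqn4.2} (using trivializations of $\E$ and $\E_\infty$ that extend over $\Delta_l$) and Lemma \ref{CS}(b) to the boundary term; the $\int_{\p\Delta_l}CS(A')$ contributions cancel, giving
\[
8\pi^2 N m_k^{alg}=\sum_l\int_{\p\Delta_l}\bigl[CS(\tau^*\tilde A)-CS(\tilde A)\bigr].
\]
Summing Lemma \ref{AnalyticMultiplicity} over $l=1,\dots,N$ and applying Lemma \ref{CS}(a) to $A_{j_i}$ on $\E$ and $A_\infty$ on $\E_\infty$ gives
\[
8\pi^2 N m_k^{an}=\lim_{i\to\infty}\sum_l\int_{\p\Delta_l}\bigl[CS(A_{j_i})-CS(A_\infty)\bigr].
\]
Writing $CS(A_{j_i},\tau^*A_\infty)=CS(A_{j_i})-CS(\tau^*A_\infty)$ by Lemma \ref{CS}(b) and subtracting, the $CS(A_{j_i})$ pieces cancel in the limit to produce
\[
8\pi^2 N(m_k^{an}-m_k^{alg})-\lim_{i\to\infty}\sum_l\int_{\p\Delta_l}CS(A_{j_i},\tau^*A_\infty)=\sum_l\int_{\p\Delta_l}\bigl[CS(\tau^*A_\infty,\tau^*\tilde A)-CS(A_\infty,\tilde A)\bigr].
\]
The right-hand side vanishes by Lemma \ref{CS}(c) applied to $\tau$, and dividing by $8\pi^2$ yields the corollary.

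The main subtlety is that $\E$ and $\E_\infty$ are distinct smooth bundles, so the individual absolute Chern--Simons integrals above depend on a choice of trivialization and are not intrinsic. Every such trivialization-dependent quantity, however, enters only through differences which recombine into relative transgressions, and the pullback identity $CS(\tau^*A,\tau^*B)=CS(A,B)$ supplies exactly the cancellation needed. This is essentially the same mechanism that Sibley--Wentworth \cite{SW} use to match analytic and algebraic multiplicities along the blow-up locus of the Yang--Mills flow; we merely transplant it to the cone setting.
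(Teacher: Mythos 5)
Your proof is correct and takes essentially the same route as the paper's: rewrite the bulk integrals in Equation~\eqref{eqn4.2} and in Lemma~\ref{AnalyticMultiplicity} as boundary Chern--Simons integrals via Lemma~\ref{CS}(a), then use the pullback invariance in Lemma~\ref{CS}(c) (applied to the bundle isomorphism $\tau$) to cancel the residual terms and isolate $\int_{\p\Delta_l}CS(A_{j_i},\tau^*A_\infty)$. The only cosmetic difference is that you pass through absolute transgressions $CS(A)$, $CS(\tau^*\tilde A)$, etc.\ as trivialization-dependent bookkeeping before regrouping into relative transgressions, whereas the paper manipulates relative transgressions $CS(\cdot,\cdot)$ throughout; both invoke exactly parts (a), (b), (c) of Lemma~\ref{CS} in the same roles.
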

\begin{proof}
By Lemma \ref{AnalyticMultiplicity} and Equation (\ref{eqn4.2}), using Lemma \ref{CS} we get
$$
\begin{aligned}
Nm_k^{an}
&=\lim_{i\rightarrow \infty}\int_{\cup^{N}_{l=1}\Delta_l }\frac{1}{8\pi^2}\{\tr(F_{A_{j_i}}\wedge F_{A_{j_i}})-\tr(F_{A_\infty} \wedge F_{A_\infty})\}\\
&=Nm^{alg}_{k}-\lim_{i\rightarrow \infty}\frac{1}{8\pi^2}\int_{\cup_{l=1}^N \p \Delta_l}CS(\pi^*\underline A', \tau^* (\pi^*\underline A) )+CS(A_{j_i}, \pi^*\underline A')+CS( \pi^*\underline A, A_\infty )\\
&=Nm^{alg}_{k}-\lim_{i\rightarrow \infty}\frac{1}{8\pi^2}\int_{\cup_{l=1}^N \p \Delta_l} CS(A_{j_i}, \tau^*(\pi^*\underline A))+CS(\tau^*( \pi^*\underline A),\tau^* A_\infty)\\
&=Nm^{alg}_k-\lim_{i\rightarrow\infty}\frac{1}{8\pi^2} \int_{U_{l=1}^N \p \Delta_l }CS(A_{j_i},\tau^*A_\infty).
\end{aligned}
$$
\end{proof}

Now Theorem \ref{Bubbling set} follows from the following Proposition combined with Corollary \ref{Cor4.2}.

\begin{prop}\label{Prop4.3} For all $1\leq l\leq N$, we have
$$\lim_{i\rightarrow\infty}\int_{\p\Delta_l}CS(A_{j_i}, \tau^*A_\infty)=0.$$
\end{prop}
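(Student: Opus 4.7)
The strategy is to bridge $A_{j_i}$ and $\tau^*A_\infty$ through $P^*A_\infty$, exploiting the smooth convergence $g_i\cdot A_{j_i}\to P^*A_\infty$ on $\partial\Delta_l$. By choosing $\Delta_l$ so that $\partial\Delta_l\cap\Sigma=\emptyset$ (possible since $\underline\Delta$ is transverse to $\underline\Sigma_k$ at a generic point and $\Sigma$ has real codimension at least four), all the relevant connections and bundle isomorphisms are smooth on $\partial\Delta_l$.

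By Lemma \ref{CS}(b) and the vanishing of exact forms on the closed manifold $\partial\Delta_l$,
\[
\int_{\partial\Delta_l}CS(A_{j_i},\tau^*A_\infty)=\int_{\partial\Delta_l}CS(A_{j_i},P^*A_\infty)+\int_{\partial\Delta_l}CS(P^*A_\infty,\tau^*A_\infty).
\]
For the second (constant in $i$) term, both $P^*A_\infty$ and $\tau^*A_\infty$ are pullbacks of $A_\infty$ via bundle isomorphisms $\E\to\E_\infty$ on the locally free locus, so by conjugation-invariance of trace we have $\Tr F_{P^*A_\infty}\wedge F_{P^*A_\infty}=\Tr F_{\tau^*A_\infty}\wedge F_{\tau^*A_\infty}$ as $4$-forms on $\Delta_l\setminus\Sigma$. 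Applying Stokes' theorem to $\Delta_l\setminus B_\epsilon(\Delta_l\cap\Sigma)$, the term reduces in the limit $\epsilon\to 0$ to a sum of local contributions $\deg(P^{-1}\tau|_{\partial B_\epsilon(z_l)})$, one for each $z_l\in\Delta_l\cap\Sigma$. These vanish by Lemma \ref{CS}(e) once one verifies that the complex gauge transform $P^{-1}\tau\in\mathrm{Aut}(\E|_{\partial\Delta_l})$ extends as a nowhere-vanishing smooth automorphism of $\E$ over a neighborhood of $z_l$ in $\Delta_l$: from the construction in Section \ref{Tangent Cone}, both $P$ and $\tau$ identify $\E$ with $\E_\infty$ as graded bundles adapted to the HNS filtration, and they differ only by a diagonal complex scaling $\oplus_p|z|^{\mu_p}\Id$ (nowhere zero on $\Delta_l\Subset B\setminus\{0\}$) composed with a unitary gauge built from the limiting orthonormal frames of Section \ref{semistable}; both factors extend smoothly across $\Delta_l\cap\Sigma$.

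For the first term, choose the unitary gauges $g_i$ in a Coulomb gauge adapted to $P^*A_\infty$. Since the unitary stabilizer of $P^*A_\infty$ is compact (a product of $U(1)$'s acting on the simple HYM cone summands of $\E_\infty$) and the Coulomb slice is elliptic, standard Uhlenbeck theory gives, after passing to a subsequence, $g_i\to g_\infty$ in $C^\infty$ on $\partial\Delta_l$ with $g_\infty$ in the stabilizer. Consequently $A_{j_i}=g_i^{-1}\cdot(g_i\cdot A_{j_i})\to g_\infty^{-1}\cdot P^*A_\infty=P^*A_\infty$ in $C^\infty$ on $\partial\Delta_l$, so the integrand $CS(A_{j_i},P^*A_\infty)$ tends to zero pointwise with uniform $C^0$ bounds, and dominated convergence gives $\int_{\partial\Delta_l}CS(A_{j_i},P^*A_\infty)\to 0$.

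The main obstacle is the structural claim in Step 2: justifying that $P^{-1}\tau$ extends smoothly across $\Delta_l\cap\Sigma$ as a nowhere-vanishing automorphism. This requires a precise comparison between the Uhlenbeck limit $P$ and the algebraic-geometric orthogonalization $\tau=\pi^*(\underline\rho\circ\underline\iota)$, building on the asymptotic expansion of the admissible Hermitian--Einstein metric $H$ on the HNS graded summands (Theorem 2.10 of \cite{CS1}, used already in the proof of Theorem \ref{thm3.3}); the point is that both $P$ and $\tau$ encode the same fiberwise degeneration of $\E$ to $\E_\infty$ at each $z_l\in\Sigma$, so their ratio absorbs only the topologically trivial scaling $|z|^{\mu_p}$.
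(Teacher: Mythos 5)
Your decomposition
\[
\int_{\p\Delta_l}CS(A_{j_i},\tau^*A_\infty)=\int_{\p\Delta_l}CS(A_{j_i},P^*A_\infty)+\int_{\p\Delta_l}CS(P^*A_\infty,\tau^*A_\infty)
\]
is valid, and unwinding Lemma \ref{CS} (b)--(d) shows that the two terms equal $-\deg(g_i)$ (in the limit) and $\deg(P^{-1}\tau)$ respectively. So your route requires the two \emph{separate} vanishings $\lim_i\deg(g_i)=0$ and $\deg(P^{-1}\tau)=0$, whereas the paper only establishes (and only needs) the single \emph{equality} $\lim_i\deg(g_i)=\deg(P^{-1}\tau)$, proved by exhibiting the explicit homotopy $F_t$ between $h_i$ (whose limit, by construction of $\tau$ in Section \ref{semistable}, is $P^{-1}\tau$) and $g_if_i$, and observing $\deg(f_i)=0$ because $f_i$ is globally defined on $B\setminus\{0\}$. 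Both of your vanishing claims are genuine gaps.

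For the first term, the assertion that one may choose Coulomb gauges $g_i$ converging in $C^\infty$ is circular. The slice theorem gives a unique Coulomb representative $g_i\cdot A_{j_i}$ near $P^*A_\infty$ modulo stabilizer, but whether the gauge transforms $g_i$ themselves converge on $\p\Delta_l\cong S^3$ is exactly the question of whether $\deg(g_i)$ is eventually zero; the $g_i$ are only defined on $\C^n_*\setminus\Sigma$, so they need not extend over $\Delta_l$ and Lemma \ref{CS}(e) does not apply. Indeed, tracing through Lemma \ref{AnalyticMultiplicity} and Lemma \ref{CS}(a)--(d) shows $\lim_i\deg(g_i)=-8\pi^2\,m_k^{an}-\deg(P)$, so your claim is equivalent to a nontrivial identification of the winding $\deg(P)$ of the Uhlenbeck limit isomorphism with $-8\pi^2 m_k^{an}$, which you have not proved.

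For the second term, the claim that $P^{-1}\tau$ extends smoothly as an automorphism of $\E$ across $\Delta_l\cap\Sigma$ is unjustified (you acknowledge this yourself). Both $P$ and $\tau$ are singular along $\Sigma$: $P$ is only an Uhlenbeck limit on the complement, and $\tau=\pi^*(\underline\iota\circ\underline\rho)$ fails to be an isomorphism exactly along $\text{supp}(\underline\T)$, which is precisely $\Sigma_b^{alg}$. The asymptotic expansion of $H$ from Theorem 2.10 of \cite{CS1} that you invoke governs behavior near the cone vertex $0$, not near the codimension-two bubbling locus, so it gives no control there. Unwinding the Sibley--Wentworth formula (\ref{eqn4.2}) gives $\deg(\tau)=-8\pi^2\,m_k^{alg}$, so $\deg(P^{-1}\tau)=0$ is equivalent to $\deg(P)=-8\pi^2\,m_k^{alg}$, which again is not established. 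Your two claims, taken together, are equivalent to the conclusion $m_k^{an}=m_k^{alg}$ (plus the topological identification of $\deg(P)$), but neither is proved individually, and the Coulomb-gauge and smooth-extension arguments do not close those gaps. The paper's homotopy argument is precisely the device that sidesteps having to evaluate $\deg(P)$ at all.
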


\begin{proof}
First, we have 
\begin{align*}
\int_{\p\Delta_l}CS(A_{j_i}, \tau^*A_\infty)=\int_{\p\Delta_l}CS(g_i \cdot A_{j_i}, g_i\cdot (P^{-1}\tau)^* P^{*}A_\infty)\\=\int_{\p\Delta_l}CS(g_i \cdot A_{j_i}, g_i\cdot (P^{-1}\tau)^{-1}\cdot P^{*}A_\infty) 
\end{align*}
We claim for $i$ large,  on $\E|_{\p\Delta_l}$, we have
$$\deg(P^{-1}\tau)=\deg(g_i). $$
Given this claim,  by Lemma \ref{CS}, we have 
$$
\int_{\p\Delta_l}CS(A_{j_i}, \tau^*A_\infty)=\int_{\p\Delta_l} CS(g_i\cdot A_{j_i}, P^*A_\infty)
$$
which goes to $0$ since $\{g_i \cdot A_{j_i}\}_i$ converge to $P^*A_\infty$ smoothly away from $\Sigma$.

Now we prove the Claim. The key point is that in our proof of Theorem \ref{main} (see Section \ref{semistable}), the homogeneous map $\psi_l$ we constructed to identify 
$\pi^*(\underline\E_l/\underline\E_{l-1})^{**}$ with $\pi^*\underline Q_l$ is given by (away from $\Sigma$)
$$
\psi_l=P \lim_{i\rightarrow\infty} \frac{(\pi^i_{l}-\pi^i_{l-1})(g_i f_i(\pi_l-\pi_{l-1}))}{a_i}
$$
Here $\pi_l^i$ denotes the orthogonal projection from $\E$ to $(g_if_i)(\pi^*\underline\E_l)$ with respect to the metric $H'$, $\pi_l$ denote the orthogonal projection from $\E$ to $\pi^*\underline \E_l$ with respect to $H'$ and $a_i$ is suitable normalizing constant (see the proof in Section \ref{semistable}). Since the map between $\underline \E_l /\underline \E_{l-1}$ and $(\underline\E_l/\underline\E_{l-1})^{**}$ which induces an isomorphism of $(\E_l/\E_{l-1})^{**}$ is unique up to rescaling, we can assume $\tau=\bigoplus_l \psi_l$ by re-normalizing $a_i$. Write 
$$h_i:=\frac{(\pi^i_{l}-\pi^i_{l-1})(g_i f_i(\pi_l-\pi_{l-1}))}{a_i}, $$
then it is easy to see that $h_i$ is smoothly homotopic to $g_if_i$ away from $\Sigma$. Indeed, consider for $t\in[0, 1]$ the family
$$F_t= \frac{(\pi^i_{l}-\pi^i_{l-1})(g_i f_i(\pi_l-\pi_{l-1}))+t\sum_{l_1, l_2<l}(\pi^i_{l_1}-\pi^i_{l_1-1})(g_i f_i(\pi_{l_2}-\pi_{{l_2}-1}))}{(1-t)ta_i+t}$$
is a family of complex gauge transformations satisfying $F_0=h_i$ and $F_1=g_if_i$. Now since $f_i$ is defined over $B\setminus \{0\}$, we know by Lemma \ref{CS} that $\deg(f_i)=0$ on $\E|_{\p\Delta_l}$. So for $i$ large, we have 
$$
\begin{aligned}
\deg(P^{-1}\tau)
=&\deg(\lim_{i\rightarrow\infty} g_i f_i)\\
=& \lim_{i\rightarrow\infty} \deg g_i.
\end{aligned}
$$
This finishes the proof of the claim.
\end{proof}

\section{Examples}
In this section, we will prove Corollary \ref{Example}. We first state a lemma to construct reflexive sheaves in general. Suppose $\{f_1,\cdots f_k\}$ is a regular sequence of holomorphic function over an open subset $U\subset \C^n$ i.e. 
$$\text{Codim}_{\mathbb{C}}(\text{Zero}(f_1,\cdots f_k))=n-k.$$ 
Denote $u:=(f_1,\cdots f_k)\in \O^{\oplus k}$. Consider the coherent sheaf $\E$ given by the following exact sequence
$$
0\rightarrow \O\xrightarrow{u} \O^{\oplus k} \rightarrow \E\rightarrow 0.
$$ 
\begin{lem}\label{lem5.1}
$\E$ is a reflexive sheaf over $U$ for $k\geq 3$.
\end{lem}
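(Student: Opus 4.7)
The plan is to realize $\E$ as the kernel of a morphism between locally free sheaves and then invoke the standard characterization of reflexive sheaves on a smooth variety (see Hartshorne, \emph{Stable reflexive sheaves}, Proposition $1.1$). The key tool is the Koszul complex associated to the regular sequence $(f_1,\ldots,f_k)$.

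First I would set up the Koszul complex $K_\bullet=(\Lambda^\bullet\O^{\oplus k},d)$, with differentials $d\colon K_i\to K_{i-1}$ given by contraction against $u$. Under the usual identification $\Lambda^{k-1}\O^{\oplus k}\cong\O^{\oplus k}$, the top differential $d_k\colon K_k=\O\to K_{k-1}=\O^{\oplus k}$ is exactly the map $u$ of the statement. Since $U$ is smooth each stalk $\O_{U,x}$ is Cohen-Macaulay, so the codimension hypothesis on $\text{Zero}(f_1,\ldots,f_k)$ is equivalent to $(f_1,\ldots,f_k)$ being an algebraic regular sequence in every stalk; hence the Koszul complex is exact in every positive degree.

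The rest is a short diagram chase. Exactness at $K_{k-1}$ identifies
$$
\E=\text{coker}(u)=K_{k-1}/\ker(d_{k-1})\cong\mathrm{image}(d_{k-1})\hookrightarrow K_{k-2},
$$
and for $k\geq 3$ exactness at $K_{k-2}$ further identifies $\mathrm{image}(d_{k-1})$ with $\ker(d_{k-2})$. This yields the desired exact sequence
$$
0\to\E\to K_{k-2}\xrightarrow{d_{k-2}}K_{k-3},
$$
exhibiting $\E$ as the kernel of a morphism between locally free sheaves and finishing the proof.

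I do not expect any serious obstacle: once the right formulation is chosen, the argument reduces to a direct application of Koszul exactness. The only conceptual remark is that the hypothesis $k\geq 3$ enters precisely to ensure the term $K_{k-3}$ exists, which is what promotes the inclusion $\E\hookrightarrow K_{k-2}$ (valid already for $k=2$) to a realization of $\E$ as a kernel. This is consistent with the known fact that reflexivity genuinely fails for $k=2$, as illustrated by the ideal $(x_1,x_2)\subset\O_{\C^3}$, whose double dual is $\O_{\C^3}$.
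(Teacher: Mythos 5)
Your proof is correct and follows essentially the same route as the paper: exactness of the Koszul complex on the regular sequence $(f_1,\ldots,f_k)$, realization of $\E$ as the kernel of a map between locally free sheaves, and the standard reflexivity criterion (Hartshorne's Proposition~1.1 is the same statement as the Kobayashi Proposition~5.22 cited in the paper). The only cosmetic difference is that you use the contraction Koszul complex indexed downward, $K_i=\Lambda^i\O^{\oplus k}$ with $d\colon K_i\to K_{i-1}$, so you land on $0\to\E\to\Lambda^{k-2}\O^{\oplus k}\to\Lambda^{k-3}\O^{\oplus k}$, whereas the paper uses the dual wedge complex $\wedge u\colon\Lambda^i\to\Lambda^{i+1}$ and lands on $0\to\E\to\Lambda^2\O^{\oplus k}\to\Lambda^3\O^{\oplus k}$; these are dual presentations of the same argument. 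Your remark that Cohen-Macaulayness of the stalks upgrades the codimension condition to stalkwise regularity is a useful elaboration left implicit in the paper, and your identification of $k\geq3$ as guaranteeing the existence of the third term is exactly where the paper uses the same bound.
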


\begin{proof}
Indeed, since $\text{Codim}_{\mathbb{C}}(\text{Zero}(f_1,\cdots f_k))=n-k,$ by Lemma on Page $688$ in \cite{GH}, the following Koszul complex given by $u$ is exact over $U$
$$
0\rightarrow \O \xrightarrow{\wedge u} \O^{\oplus k} \xrightarrow{\wedge u} \wedge^2 \O^{\oplus k} \xrightarrow{\wedge u}\wedge^3 \O^{\oplus k} \cdots \xrightarrow{\wedge u} \I \rightarrow 0
$$
where $\I$ is the ideal sheaf generated by $\{f_1,\cdots f_k\}$. By exactness of the above sequence and the definition of $\E$, we have the following exact sequence
$$
0\rightarrow  \E \rightarrow \wedge^2\O^{\oplus k} \rightarrow \wedge^3 \O^{\oplus k}
$$
which implies $\E$ is reflexive by Proposition $5.22$ in \cite{Kobayashi}.
\end{proof}

Now we discuss a class of \emph{local} examples. Over $\C\P^2$ we denote by $\underline\E_k$ the locally free rank 2 sheaf defined by the exact sequence 
$$0\rightarrow \O_{\C\P^2}\xrightarrow{f_k} \O_{\C\P^2}(1)\oplus \O_{\C\P^2}(1)\oplus \O_{\C\P^2}(k) \rightarrow\underline \E_k\rightarrow 0$$
where $f_k=(z_1, z_2, z_3^k)$.  Let $\E_k=\psi_*\pi^*\underline\E_k$. 

It is easy to see $c_1(\underline\E_k)=k+2$. By using the criteria given by Lemma $1.2.5$ in \cite{CMH}, we can easily get the following
\begin{itemize}
\item If $k=1$, then $\underline\E_k$ is stable; 
\item If $k=2$, then $\underline \E_k$ is semistable, and a Seshadri filtration is given by 
$$0\rightarrow \O_{\C\P^2}(k)\rightarrow \underline \E_k\rightarrow \I_{[0:0:1]}\otimes \O_{\C\P^2}(2)\rightarrow 0, $$
where $\I_{[0:0:1]}$ is the ideal sheaf of the point $[0:0:1]$, and the first map is induced by the inclusion $\O_{\C\P^2}(k)\hookrightarrow \O_{\C\P^2}(1)\oplus \O_{\C\P^2}(1)\oplus \O_{\C\P^2}(k)$.
\item If $k>2$, then $\underline \E_k$ is unstable and the Harder-Narasimhan filtration is given by 
$$0\rightarrow \O_{\C\P^2}(k)\rightarrow \underline \E_k\rightarrow \I_{[0:0:1]}\otimes \O_{\C\P^2}(2)\rightarrow 0.$$
\end{itemize}

When $k\geq 2$, we have  
$$Gr^{HNS}(\underline\E_k)=\O_{\C\P^2}(k)\oplus (\I_{[0:0:1]}\otimes \O_{\C\P^2}(2)), $$
so
$$\psi_*\pi^*(Gr^{HNS}(\underline\E_k))^{**}=\O_{\C^3}^{\oplus 2}, $$
and the algebraic bubbling set 
$$\Sigma_b^{alg}=\{0\}\times \C_{z_3}\subset\C^3$$
with multiplicity $1$.

Now suppose $A$ is an admissible Hermitian-Yang-Mills connection on $\E_k|_{B}(k\geq 2)$, and let $(A_\infty, \Sigma, \mu)$ be the unique analytic tangent cone of $A$ at $0$, then by Theorem \ref{main} and \ref{Bubbling set} we know $A_\infty$ is the trivial flat connection on $\O_{\C^3}^{\oplus 2}$, and the bubbling set is $\Sigma_b$, $\mu=[\Sigma_b]$. In particular, the analytic tangent cones, as defined in this paper, are all the same for all $k\geq 2$. 

\begin{rmk}
It is interesting to understand how to interpret the integer $k$ here in terms of the admissible Hermitian-Yang-Mills connections on $\E_k$.
\end{rmk} 
Finally, to prove Corollary \ref{Example}, we consider a global example. On $\C\P^{3}$,  we let $\E$ be given as follows
$$
0\rightarrow \O_{\C\P^3} \xrightarrow{\sigma} \O_{\C\P^3}(2)\oplus \O_{\C\P^3}(1)\oplus \O_{\C\P^3}(2) \rightarrow \E\rightarrow 0.
$$
where $\sigma=(z^2_1, z_2, z_3z_4)$. 
\begin{lem}
$\E$ is a rank $2$ stable reflexive sheaf with singular set given by $\{[0,0,0,1], [0,0,1,0]\}$.
\end{lem}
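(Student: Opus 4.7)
The plan is to verify the three assertions separately, exploiting the defining exact sequence
$$0\rightarrow \O_{\C\P^3} \xrightarrow{\sigma} \O_{\C\P^3}(2)\oplus \O_{\C\P^3}(1)\oplus \O_{\C\P^3}(2) \rightarrow \E\rightarrow 0,$$
where $\sigma=(z_1^2, z_2, z_3z_4)$. The rank is immediate: $\rk \E = 3-1 = 2$. For reflexivity I would work locally on the standard affine charts of $\C\P^3$. On $\{z_1\ne 0\}$ and $\{z_2\ne 0\}$ the section $\sigma$ has nowhere-vanishing components (equal to $1$ after trivialization), so the sequence splits and $\E$ is locally free, hence reflexive. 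On $\{z_4\ne 0\}\simeq \C^3$ the section becomes $(z_1^2, z_2, z_3)$, and on $\{z_3\ne 0\}\simeq \C^3$ it becomes $(z_1^2, z_2, z_4)$. In both cases the components form a regular sequence in $\O_{\C^3}$ (their common vanishing locus is $\{0\}$, of codimension exactly $3$), so Lemma \ref{lem5.1} (applied with $k=3$) shows that the cokernel is reflexive. Gluing these local reflexive sheaves gives reflexivity of $\E$ on $\C\P^3$.

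For the singular set, $\E$ is locally free precisely where $\sigma$ is a nowhere-vanishing section of the middle bundle, since elsewhere $\sigma(p)=0$ forces the fiber of $\E$ at $p$ to have dimension $3>\rk \E$. Thus $\Sing(\E)$ is the zero locus of $\sigma$, which by direct inspection is $\{z_1=z_2=0,\ z_3z_4=0\}=\{[0,0,0,1],[0,0,1,0]\}$.

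For stability, first compute $c_1(\E)=2+1+2-0=5$, so $\mu(\E)=5/2$. Since any rank $1$ saturated subsheaf of $\E$ is reflexive of rank $1$ on the smooth variety $\C\P^3$, it is necessarily a line bundle $\O_{\C\P^3}(k)$ for some $k\in\Z$. I would apply $\mathrm{Hom}(\O(k),-)$ to the defining sequence and use $H^1(\C\P^3,\O(-k))=0$ to obtain
$$\mathrm{Hom}(\O(k),\E)=\mathrm{coker}\bigl(H^0(\O(-k))\xrightarrow{\sigma} H^0(\O(2-k))\oplus H^0(\O(1-k))\oplus H^0(\O(2-k))\bigr).$$
For $k\ge 3$ all target $H^0$'s vanish, so there is no nonzero map $\O(k)\to\E$. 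Therefore any rank $1$ saturated subsheaf satisfies $c_1(\F)\le 2<5/2=\mu(\E)$, proving Mumford stability.

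The main potential obstacle is being careful about reflexivity at the two singular points: one must rule out that the cokernel is merely torsion-free there. Invoking Lemma \ref{lem5.1} via the regular-sequence property of $(z_1^2,z_2,z_3)$ (resp. $(z_1^2,z_2,z_4)$) on the affine charts around $[0,0,0,1]$ and $[0,0,1,0]$ is what clinches this; otherwise the verifications are routine cohomology computations on $\C\P^3$.
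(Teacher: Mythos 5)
Your proof is correct and follows essentially the same route as the paper: reflexivity at the two singular points via the Koszul-complex Lemma~\ref{lem5.1} applied to the regular sequences $(z_1^2,z_2,z_3)$ and $(z_1^2,z_2,z_4)$ in the relevant affine charts, and stability via the vanishing $\mathrm{Hom}(\O(k),\E)=0$ for $k\geq 3$, which is the criterion $H^0(\E(-3))=0$ that the paper invokes. Your version simply spells out the chart computation and the cohomological reduction (replacing a saturated rank-one subsheaf by its reflexive hull, a line bundle) that the paper leaves implicit.
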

\begin{proof}
It is obvious that $\E$ is locally free away from $\{[0,0,0,1], [0,0,1,0]\}$. Near $[0,0,0,1]$, since $[0,0,0,1]$ is an isolated common zero of $\{z_1^2, z_2, z_3z_4\}$, by Lemma \ref{lem5.1}, we know $\E$ is reflexive near $[0,0,0,1]$. Similarly, $\E$ is also reflexive at $[0,0,1,0]$. Since $H^{0}(\C\P^{n-1}, \E\otimes \O_{\C\P^3}(-3))=0$, $\E$ is stable.  
\end{proof}

By Theorem $2$ in \cite{BS}, on $\C\P^3$ endowed with the standard Fubini-Study metric,  there exists an admissible Hermitian-Yang-Mills connection $A$ on $\E$ with  singularities at $p_0=[0,0,0,1]$ and $p_1=[0,0,1,0]$. We will apply our Theorem \ref{main} and \ref{Bubbling set} to study the local behavior of $A$ near $p_0$ and $p_1$.  Locally around $p_0$, $\E$ is isomorphic to $\E_2$ (as defined as above). The same is true at $p_1$ by symmetry. So we see $\E$ provides an example in Corollary \ref{Example}.

\end{document}